\def\thm@space@setup{%
  \thm@preskip=\parskip
  \thm@postskip=\thm@preskip 
}
\theoremstyle{definition}
\newtheorem{theorem}{Theorem}[section]
\newtheorem{lemma}[theorem]{Lemma}
\newtheorem{proofpart}{Part}
\def\sqrtlimspace#1{%
  \begingroup
    \sbox0{$#1$}
    \def\underbrace##1_##2{##1}
    \sbox2{$#1$}
    \dimen0=\wd0 \advance\dimen0-\wd2
    \mathrlap{\sqrt{\phantom{\displaystyle#1}\kern\dimen0 }}
    \hphantom{\sqrt{\vphantom{\displaystyle#1}}}
  \endgroup
  #1}
\title{Fast increased fidelity approximate Gibbs samplers for Bayesian Gaussian process regression}
\author{Kelly R. Moran and Matthew W. Wheeler}
\date{} 
\begin{document}

\maketitle

\begin{abstract}
    Gaussian processes (GPs) are common components in Bayesian non-parametric models. Their use is supported by efficient sampling algorithms, a rich methodological literature, and strong theoretical grounding.  However, due to their prohibitive computation and storage demands, the use of exact GPs in Bayesian models is limited to problems containing at most several thousand observations. Computational and storage bottlenecks arise when sampling the GP. Sampling requires a matrix inversion and the Cholesky factorization of the conditional covariance matrix; these operations scale at $\mathcal{O}(n^3),$ where $n$ is the number of unique inputs. Storage of individual matrices scales at $\mathcal{O}(n^2),$ and can quickly overwhelm the resources of most modern computers for larger problems. To overcome these bottlenecks, we develop a sampling algorithm using $\mathcal{H}$ matrix approximation of the matrices comprising the GP posterior covariance. These matrices can approximate the true conditional covariance matrix within machine precision and allow for sampling algorithms that scale at $\mathcal{O}(n\hspace{1mm}\mbox{log}^2 n)$ time and storage demands scaling at $\mathcal{O}(n\hspace{1mm}\mbox{log}\hspace{1mm}n).$ We also describe how these algorithms can be used as building blocks to model higher dimensional surfaces at $\mathcal{O}(d n\hspace{1mm}\mbox{log}^2 n)$, where $d$ is the dimension of the surface under consideration, using tensor products of one-dimensional GPs. Though various scalable processes have been proposed for approximating Bayesian GP inference when $n$ is large, to our knowledge, none of these methods show that the approximation's Kullback-Leibler divergence to the true posterior can be made arbitrarily small and may be no worse than the approximation provided by finite computer arithmetic. In what follows, we describe $\mathcal{H}-$matrices, give an efficient Gibbs sampler using these matrices for one-dimensional GPs, offer a proposed extension to higher dimensional surfaces, and investigate the performance of this fast increased fidelity approximate GP, FIFA-GP, using both simulated and real data sets.
\end{abstract}

\section{Introduction}

Gaussian processes (GPs) provide flexible priors over smooth function spaces and are widely used in Bayesian non-parametric modeling. Though such models have strong theoretical underpinnings (e.g., see \cite{rasmussen2006} and references therein), they are limited in use due to the computational complexity required to sample from the posterior distribution, which grows at cubic time complexity and quadratic storage requirements in relation to the number of unique observations. For  $n$ observations, estimation requires computing the inverse, determinant, and square-root decomposition of the $n \times n$ covariance matrix. These operations scale at $\mathcal{O}(n^3)$ with $\mathcal{O}(n^2)$ memory requirements. When using Gibbs sampling, computational issues are compounded as reliable inference depends on multiple samples being drawn from the posterior distribution. In practice, the use of GPs is limited to data sets containing at most several thousand observations.

A large literature exists on scalable GPs. Common methods for approximating the likelihood from the full GP include (a) using a subset of the training data \cite{herbrich2003fast, chalupka2013framework}; (b) introducing a grid of inducing points \cite{williams2001using, quinonero2005unifying,  titsias2009variational, hensman2013gaussian} to approximate the covariance matrix using a low-rank matrix that is scaled further using structure exploiting algebra \cite{wilson2015kernel, wilson2015thoughts}; (c) relying on local data to make predictions in a given region \cite{gramacy2008bayesian, nguyen2009local, datta2016nearest}. Methods based on (a) or (b) struggle to capture local structure in the data unless the subset size or number of inducing points approach the number of training points. Methods based upon (c) tend to capture local structure well but may struggle with global patterns; additionally, they do not offer an approximation to the global covariance. \cite{saibaba2012efficient, saibaba2015fast} suggest methods for approximating posterior realizations of a function conditional on observed data, but do not address hyperparameter uncertainty or quantify the distributional similarity between the true and approximating posterior. The bulk of existing approximation approaches do not address the issue of unified parameter estimation and function interpolation. Further, the majority of the extant literature focuses on fast approximate methods for computing the inverse of a matrix, but does not address estimating the square-root of this inverse, which is required to sample from the process. For a thorough review of these approaches, see \cite{liu2018gaussian}. 

Though the majority of the above methodologies are used in a frequentist context for point estimation, explicitly Bayesian models have been proposed to efficiently approximate samples from a GP while accounting for hyperparameter uncertainty for large $n$. \cite{sang2012full} uses a combination of inducing point and sparse kernel methods to capture both local and global structure. Global structure is captured via a reduced rank approximation to the GP covariance matrix based on truncating the Karhunen–Lo{\`e}ve expansion of the process and solving the resulting integral equation using the Nystr{\"o}m method, and residual local structure is captured via a tapered kernel. Approximation fidelity is a function of taper length and number of inducing points. Computation using this method scales at $\mathcal{O}(nm^2 +nk^2)$, where $m$ is the number of inducing points and $k$ is the average number of non-zero entries per row in the approximated covariance matrix. For high fidelity approximations, or in higher dimensions, the size of $m$ and $k$ may need to approach $n$ to reach the desired accuracy. This leads to a sampling algorithm with quadratic complexity.

As an alternative to this approach, \cite{banerjee2012efficient} allows for increased computational efficiency and a probabilistic bound on the Frobenius norm between the approximated and true covariance matrix. This approach constructs a linear projection of the data onto a lower dimensional subspace using a stochastic basis construction \cite{sarlos2006improved, halko2009finding}. The $m-$dimensional subspace in this random projection method and that of the inducing point method in \cite{sang2012full} are equivalent when the projection matrix is chosen to be the first $m$ eivenvectors of the SVD of the GP covariance matrix. The construction of these approximate matrices may scale poorly as $n$ increases and increases the overall computation time even though the sampling algorithm is efficient. Finally, the algorithm still scales at $\mathcal{O}(m^3),$ where $m$ is the dimension of the subspace approximation, and in higher dimensions $m$ tends to approach $n,$ resulting in no computational savings.

A Bayesian approach that does offer significant computational savings is the hierarchical nearest-neighbor approach \cite{datta2016hierarchical} which has $\mathcal{O}(n\hspace{1mm}\mbox{log}\hspace{1mm}n)$ complexity.  This method serves as a sparsity-inducing prior allowing for scalability to previously unimplementable data sizes; however, it  sacrifices the ability to retain fidelity to a true (non-sparse) covariance matrix by forcing correlations between the majority of observations to be exactly $0,$ guaranteeing the approximating covariance matrix will diverge from the true covariance matrix.

As an alternative to the above mentioned matrix approximation methods, $\mathcal{H}$-matrices \cite{hackbusch1999sparse,grasedyck2003construction} provide efficient matrix compression techniques, that allow for quasi-linear  approximate solutions to linear systems of equations, determinant computation, matrix multiplication, and Cholesky-like square root matrix decomposition. The computational complexity of a given operation can vary depending on the type of $\mathcal{H}$-matrix and the particular algorithm, but those considered here, and the majority of the $\mathcal{H}-$matrix approaches defined in the literature,  have cost at most $\mathcal{O}(n\hspace{1mm} \mbox{log}^2 n)$. $\mathcal{H}$-matrix techniques combine methods similar to \cite{datta2016hierarchical} and \cite{banerjee2012efficient} to accurately approximate a covariance matrix to near machine precision by decomposing the matrix as a tree of partial and full rank matrices. 

$\mathcal{H}$-matrices have previously been used for likelihood approximation and kriging in large scale GPs \cite{ambikasaran2014fastdirect, litvinenko2019likelihood, geoga2019scalable}, Gaussian probability computations in high dimensions \cite{cao2019hierarchical, genton2018hierarchical}, and as a step in approximate matrix square root computation using Chebyshev matrix polynomials for conditional realizations \cite{saibaba2012efficient}. Despite these advances, from a Bayesian perspective, the benefits of the $\mathcal{H}-$matrix formulation do not directly translate into efficient Gibbs sampling algorithms. This is because the posterior conditional distribution has a covariance matrix that is comprised of multiplicative component pieces and not directly amenable to $\mathcal{H}-$matrix approximation (i.e., we can't approximate the posterior covariance using one single $\mathcal{H}-$matrix; see equation \ref{eq:gp_posterior}). To overcome this difficulty, we propose a sampling algorithm that does not require direct computation (approximate or otherwise) of the posterior covariance matrix, but instead uses the properties of $\mathcal{H}-$matrices to sample from the posterior conditional distribution. This algorithm is efficient, providing scalable near exact GP approximations for $d=1.$ 

Construction of these matrices bottlenecks when the surface under study has dimension $d > 1$. In cases where $d > 1$, we approximate the surface using a tensor product of $d$ 1-dimensional GPs similar to \cite{wheeler2019bayesian}. This results in a process that can approximate a surface with high fidelity by relying on $d$ 1-dimensional functions that, given the right covariance kernel, is not constrained by defining inducing points typical to spline tensor product methods \cite{deboor1978practical, eilers2010splines}. The method, which we call fast increased fidelity approximate GP (FIFA-GP), involves approximations of the GP covariance using $\mathcal{H}$-matrices, requiring $d$ $\mathcal{O}(n\hspace{1mm}\mbox{log}^2n)$ matrix factorizations for the tensor product formulation. We develop a novel sampling step for the bases of the function posterior with computational cost $\mathcal{O}(d\hspace{1mm} n\hspace{1mm}\mbox{log}^2\hspace{1mm}n)$.

When $d=1,$ we show the proposed method has a bound on the Kullback–Leibler divergence between the approximated and true conditional \textit{posterior} (competing methods only provide such a bound on the \textit{prior}) that, in many cases, can be increased to machine precision. Further when $d > 1$, we show how the method defines a stochastic process with an infinite functional basis based upon the individual GP covariance kernels. The methodology has the ability to run problems with $n$ on the order of $10^5$ on a local machine. Additionally, \textbf{\textsf{R}} code that can be used to reproduce all examples in this paper is included in the Supplementary Materials.

The manuscript is structured as follows: Section \ref{sec:background} gives an overview of Gaussian process modeling and describes existing methods for approximate Bayesian computation. Section \ref{sec:h_matrices} describes the matrix decomposition method used for the proposed GP approximation. Section \ref{sec:posterior} offers proofs for the approximation fidelity of the posterior and details the sampling algorithm. We illustrate the relative performance of all methods alongside a true GP in Section \ref{sec:simulation} with a simulation study having small $n$ and then compare performance of just approximate methods using a large $n$ simulation. Section \ref{sec:real_data} provides timing and performance results using real data. Finally, section \ref{sec:discussion} discusses possible applications and extensions of this approximation to other high-dimensional Bayesian settings.

\section{Gaussian process models}\label{sec:background}

We provide an overview of Gaussian process models and detail the computational bottlenecks associated with their use in Bayesian samplers. We also review existing approaches for approximating the GP posterior, pointing out drawbacks with each to motivate our approach.

\subsection{Gaussian process regression}

Suppose one observes $n$ noisy realizations of a function $f:\mathbb{R}^d \rightarrow \mathbb{R}$ at a set of inputs $\{\bm{x}_i \in \mathbb{R}^d\}_{i=1}^n$. Denote these observations  $\{y_i(\bm{x}_i)\}_{i=1}^n,$ and for simplicity of exposition, assume the inputs are unique. In the classic regression setting, error is assumed independent and normally distributed:
\begin{align}\label{eq:regression}
y_i=f(\bm{x}_i)+e_i, \quad e_i \sim \text{N}(0,\tau^{-1}), \quad i=1,\ldots,n.
\end{align}
When $f$ is an unknown function it is common to assume $f(\cdot) \sim GP(m(\cdot),k(\cdot,\cdot))$, a Gaussian process with mean function $m(\cdot)$ and covariance function $k(\cdot,\cdot).$ This process is completely specified by the mean and the covariance function. \textit{A priori} the mean function is frequently taken to be zero, with behavior of the process defined by the covariance function. This function, $k(\cdot,\cdot),$ along with its hyper-parameters, $\Theta$, specify properties of $f$ (e.g., for the exponential covariance kernel realizations of $f$ are smooth and infinitely differentiable);  the hyper-parameters $\Theta$ further specifies these properties (e.g. how quickly covariance between points decays or how far the function tends to stray from its mean).

Letting $f$ be a zero centered GP with $k(\cdot,\cdot)$ being any symmetric covariance kernel (e.g., squared exponential or Mat\'ern) having hyper-parameters $\Theta$, one specifies a nonparametric prior over $f$ for the regression problem given in  (\ref{eq:regression}). As an example, one may define $k(\cdot,\cdot)$ using the squared exponential covariance kernel, parameterized as $k(\bm{x}_i,\bm{x}_j)= \sigma_f^2 \  \text{exp}\big[ -\frac{1}{2}(\bm{x}_i-\bm{x}_j)'\Omega (\bm{x}_i-\bm{x}_j) \big]$
with $\Omega=\text{diag}(1/\rho_1^2,\ldots,1/\rho_d^2)$, where $\Theta=(\sigma_f^2, \rho_1^2,\ldots,\rho_d^2)$. 

In general for $n$ observations, define $K_{nn}(\Theta)$  to be the covariance matrix  for the $n$ observed inputs using $k(\cdot,\cdot).$ 
That is, $K_{nn}(\Theta)$ is such that $[K_{nn}(\Theta)]_{i,j} = k(\bm{x}_i,\bm{x}_j)$ for $i,j \in \{1, \ldots, n\}.$ Let $Y=[y_1,\ldots,y_n]'$ be the vector of observed noisy realizations of $f.$ The log-likelihood function is then 
\begin{align}\label{eq:log_likelihood}
    \ell_n(\Theta,\tau) = -\frac{n}{2}\ \text{log}(2\pi) - \text{log}\big[ \text{det}(K_{nn}(\Theta) + \tau^{-1}\mathbf{I}_n) \big] -\frac{1}{2}\ Y^T (K_{nn}(\Theta) + \tau^{-1}\mathbf{I}_n)^{-1} Y.
\end{align}
Computing this log-likelihood requires calculating the determinant and inverse of the $n\times n$ matrix $K + \tau^{-1}\mathbf{I}_n$, which are both $\mathcal{O}(n^3)$ operations. For Bayesian computation, the computational bottlenecks increase. Defining $\bm{f}=\{f(\bm{x}_i)\}_{i=1}^n$ and  $K = K_{nn}(\Theta)$, the posterior of $\bm{f}$ conditional on the hyper-parameters is known, i.e., 
\begin{equation}
\begin{split}
\bm{f}|\bm{y},X,\Theta,\tau &\sim \text{N}(\mu_{f}, \Sigma_{f}), \\
\Sigma_{f} &= K - K(K + \tau^{-1}\mathbf{I}_n)^{-1}K = K(\tau K + \mathbf{I}_n)^{-1}, \\
\mu_{f} &= K(K + \tau^{-1}\mathbf{I}_n)^{-1}\bm{y} = \tau \Sigma_{f} \bm{y}.
\label{eq:gp_posterior}
\end{split}
\end{equation}

Calculating the covariance in (\ref{eq:gp_posterior}) requires the inversion of $(\tau K + \mathbf{I}_n)$ followed by the matrix multiplication of $K$ and $(\tau K + \mathbf{I}_n)^{-1}$, which are both $\mathcal{O}(n^3)$ operations. Calculating just the mean in (\ref{eq:gp_posterior}) requires either the inversion of $(K + \tau^{-1} \mathbf{I}_n)$ followed by two matrix-vector products (an $\mathcal{O}(n^3)$ followed by two $\mathcal{O}(n^2)$ operations) or first calculating the variance in (\ref{eq:gp_posterior}) followed by a matrix-vector product (two $\mathcal{O}(n^3)$ operations followed by one $\mathcal{O}(n^2)$ operation).

For Markov Chain Monte Carlo algorithms,  assuming prior distributions are assigned to $\Theta$ and $\tau,$ inference proceeds by sampling from  $p(\Theta,\tau,\bm{f}|\bm{y},X),$ which requires evaluating (\ref{eq:log_likelihood}) and (\ref{eq:gp_posterior}) a large number of times. In addition to these evaluation, the Cholesky decomposition of the matrix product is required for sampling from the posterior of $\bm{f}$ at each iteration. This is also an $\mathcal{O}(n^3)$ operation. The computational cost of each sample limits the use of Bayesian estimation of the full GP to problems having at most $5000$ observations on most single processor computers, and $5000$ observations is generous. It is the authors' experience that  $1500$ is a more reasonable upper bound for most problems.

\subsection{Bayesian approximation methods}


Given the difficulties of evaluating (\ref{eq:log_likelihood}) and (\ref{eq:gp_posterior})  for large $n,$ a number of approximation methods have been developed.
The approach of \cite{sang2012full} combines the reduced rank process of \cite{banerjee2008gaussian} with the idea of covariance tapering in order to capture both global and local structure. Specifically, $f(\bm{x})$ is decomposed as $f(\bm{x}) = f_{global}(\bm{x}) + f_{local}(\bm{x}),$ where $f_{global}(\bm{x})$ is the reduced rank approximation from \cite{banerjee2008gaussian} and $f_{local}(\bm{x}) = f(\bm{x}) - f_{global}(\bm{x})$ is the residual of the process after this global structure has been accounted for. The covariance function of the residual $f_{local}(\bm{x})$ is approximated using a tapered covariance kernel, i.e. a kernel in which  the covariance is exactly 0 for data at any two locations whose distance is larger than the specified taper range. The full-scale method has improved performance relative to the reduced rank process or covariance tapering alone and has the desired quality of capturing global and local structure. However, the quality of this approximation to the original function is highly dependent on the choice of taper length and number of inducing points and there is no way to constructively bound the error between realizations of the approximate and true covariance matrices.

The compression approach of \cite{banerjee2012efficient} approximates the covariance matrix $K$ by $K^{\text{lp}}=(\Phi K)^T (\Phi K \Phi^T)^{-1} \Phi K$, where $\Phi$ is a projection matrix. The ``best'' rank-$m$ projection matrix in terms of $||\cdot||_F$ and $||\cdot||_2$ is the matrix of the first $m$ eigenvectors of $K.$ Because finding the spectral decomposition of $K$ is itself an $\mathcal{O}(n^3)$ operation, the two algorithms in \cite{banerjee2012efficient} focus on finding near-optimal projection approximations. The second of the proposed algorithms in the paper has the advantage of a probabilistic bound on the Frobenius norm between the projection approximation and true covariance matrix. That is, one can choose algorithm settings s.t. $\Pr(||K - \Phi^T\Phi K||_F < \varepsilon) = p$ for some desired probability $p.$ However, it is iterative; its use requires expensive pre-computation steps that scale poorly as $n$ increases with no defined order of computational complexity in this pre-computation phase. 

The hierarchical nearest-neighbor GP \cite{datta2016hierarchical} is introduced as a sparsity-inducing prior that allows for fully Bayesian sampling with scalability to previously unimplementable data sizes. This prior introduces a finite set of neighborhood sets having block sparsity structure, and defines a relation between these neighborhood sets via a directed acyclic graph. The distribution at any new points is then expressed via nearest neighborhood sets, yielding a valid spatial process over uncountable sets. While this prior is shown to yield similar inference to the full GP, the choice of a sparse prior means the ability to retain fidelity to a true (non-sparse) covariance matrix is sacrificed. Thus, although inference in simulation using the true and approximated posterior appear similar in numerical simulations, there is no bound on their divergence.

The $\mathcal{H}$-matrix approximations used in FIFA-GP for approximating $K$ generalize block-diagonal, sparse, and low-rank approximations to the underlying covariance matrix \cite{litvinenko2019likelihood}. The first layer of the FIFA-GP approximation to $K$ is analogous to the NNGP with neighbor sets defined to be those points located in the same dense diagonal block. These off diagonal blocks can be estimated using parallel random projection approaches to enable fast decomposition into low-rank approximations, which are similar to the projection algorithms proposed for the full matrix in the compressed GP model. Additionally, the unique structure of the $\mathcal{H}$ matrix composition allows for fast computation of the determinant, the Cholesky, and the inverse, which are required for an MCMC algorithm, but cannot be used directly to sample from (\ref{eq:gp_posterior}). Here, one of the key contributions of this manuscript is showing how to combine standard $H$-matrix operations to draw a random vector with distribution defined in (\ref{eq:gp_posterior}).

Using FIFA-GP, local structure is captured due to the dense block diagonal elements. At the same time, global structure is preserved via high-fidelity off-diagonal compression.  Furthermore, the approximation error between the true and approximate covariance matrices is bounded when constructing the $\mathcal{H}-$matrix. This allows for a bound on the KL-divergence of the posterior for the GP, and this bound has not been shown in previous methods.


\section{Covariance approximation using $\mathcal{H}$-matrices}\label{sec:h_matrices}

In this section we describe how low rank approximations and tree-based hierarchical matrix decompositions enable fast and accurate computations for Gaussian process.

\subsection{Low-rank matrices}

The rank of a matrix $A \in \mathbb{R}^{m \times n}$ is the dimension of the vector space spanned by its columns. Intuitively, the rank can be thought of as the amount of information contained in a matrix. The matrix $A$ is full rank if $\text{rank}(A) = \min(m,n);$ such a matrix contains maximal information for its dimension. For any rank $p$ matrix $A$ it is possible to write $A=UV^T$ where $U \in \mathbb{R}^{m \times p}$ and $V^T \in \mathbb{R}^{p \times n}$, then it is only necessary to store $\mathcal{O}(\max(m,n)p)$ values rather than $\mathcal{O}(mn)$ values. For GP covariance matrices having $n$ observations the savings gained by relaxing the $\mathcal{O}(n^2)$ memory requirements can be quite significant (see SI Section \ref{SI_cost_inputdim} for a concrete example).

It is also possible to approximate full-rank $A$ with some rank-$p$ matrix $A_p$ so that $A \approx A_p = UV^T.$ If $p \ll \text{rank}(A)$ significant computational gain can be achieved, but approximation fidelity depends on how much information is lost in the representation of $A$ as some lower-rank matrix. With most low-rank factorization methods the approximation error decreases as $p$ approaches $\text{rank}(A)$. 

\subsection{Hierarchical matrices}

A hierarchical matrix ($\mathcal{H}$-matrix) is a data-sparse approximation of a non-sparse matrix relying on recursive tree-based sub-division. The data-sparsity of this approximation depends on how well sub-blocks of the original matrix can be represented by low-rank matrices, and whether the assumed tree structure aligns with these potentially low-rank blocks. For dense matrices having suitably data-sparse $\mathcal{H}$-matrix approximations, many dense linear algebra operations (e.g. matrix-vector products, matrix factorizations, solving linear equations) can be performed stably and with near linear complexity since they take place on low-rank or small full-rank matrices. Storage gains are made when the decomposed low-rank blocks can be stored rather than the original full dense matrix blocks. The construction of and algorithms for $\mathcal{H}-$matrices are vast topics and outside of the scope of this manuscript; for further information on $\mathcal{H}$-matrices we refer the reader to \cite{hackbusch1999sparse,hackbusch2015hierarchical,grasedyck2003construction}.

A key advantage to using $\mathcal{H}$-matrices for matrix approximation is that the error between the true matrix and the approximate matrix can be bounded \textit{by construction}. Specifically, the max-norm (i.e. the maximum elementwise absolute difference) can be made arbitrarily small for the type of $\mathcal{H}-$matrix used in FIFA-GP.

The ability for a given matrix to be well approximated by an $\mathcal{H}$-matrix (and thus the potential for computational and storage gains) depends on the structure of that matrix. With a GP covariance matrix, the relationship between rows and columns of the matrix depends on the ordering of the individual data points. In one dimension, a simple direct sorting of the points will lead to the points closest in space being closest in the matrix. In multiple dimensions, clustering algorithms can be used to group similar data points. Alternatively, fast $\mathcal{O}(n\hspace{1mm}\mbox{log}\hspace{1mm} n)$ sorting via a $kd$-tree can be used (e.g. \cite{wald2006building}), in which the data are sorted recursively one dimension at a time.

There are many types of $\mathcal{H}$ matrices\cite{hackbusch1999sparse,borm2002data,ambikasaran2013mathcal,hackbusch2015hierarchical,grasedyck2003construction}. Each can facilitate posterior computation using the methods outlined below.
For our exposition, we focus on Hierarchical Off-Diagonal Low Rank (HODLR) matrices, a type of $\mathcal{H}$-matrix, due their fast construction and ease of use. HODLR matrices are defined recursively via $2 \times 2$ block partitions. Off-diagonal blocks are approximated via low rank representations and diagonal blocks are again assumed to be HODLR matrices. The recursion ends when the diagonal blocks reach some specified dimension at which the remaining diagonal block matrices are dense. The level of a HODLR matrix refers to the number of recursive partitions performed. Storage cost of off-diagonal blocks is reduced by storing the components of the low-rank decomposition rather than the original matrix elements. Compared to general $\mathcal{H}$-matrices, HODLR matrices may be faster in practice, but have the same time complexity for the same algorithm. As such, it can have faster decomposition and solve algorithms for our purposes; however, they sacrifice the flexibility of allowing high-rank off-diagonal blocks and adaptive matrix partitionings. HODLR matrices for Gaussian processes defined over dimensions greater than one may be difficult to construct in practice. 

\begin{figure}[htp]
    \centering
    \includegraphics[width=0.8\textwidth]{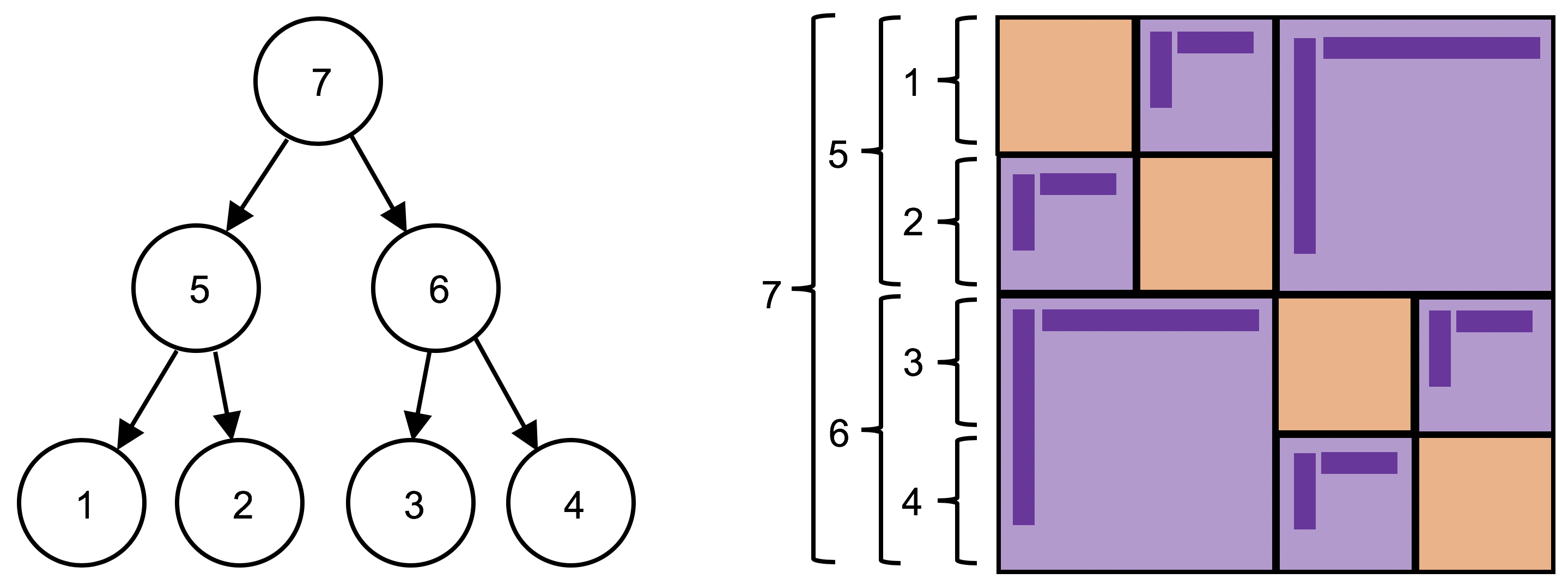}
    \caption{Example partition tree and associated partitioning of a 2-level HODLR matrix. The diagonal blocks are full rank, while the off-diagonal blocks can be represented via a reduced rank factorization.}
    \label{fig:hodlrmatrix}
\end{figure}

HODLR matrices, their factorization, and their use in GP likelihood estimation and kriging have been thoroughly discussed in \cite{ambikasaran2013mathcal, ambikasaran2014fastsymmetric}. Open source code for constructing, factorizing, and performing select linear algebra operations using HODLR matrices is available in the \texttt{HODLRlib} library \cite{ambikasaran2019hodlrlib}. Of relevance to our work are fast symmetric factorization of symmetric positive-definite HODLR matrices, determinant calculation, matrix-vector multiplication, and solver. Specifically, symmetric positive-definite HODLR matrix $A$ can be symmetrically factored into $A=WW^T$ in $\mathcal{O}(n\hspace{1mm}\mbox{log}^2 n)$ time \cite{ambikasaran2014fastsymmetric}. An example is included in SI Section \ref{sec:symm_fac_alg}.  Further details on these algorithms are available in \cite{ambikasaran2013mathcal, ambikasaran2014fastsymmetric}..

\begin{figure}[htp]
    \centering
    \includegraphics[width=0.8\textwidth]{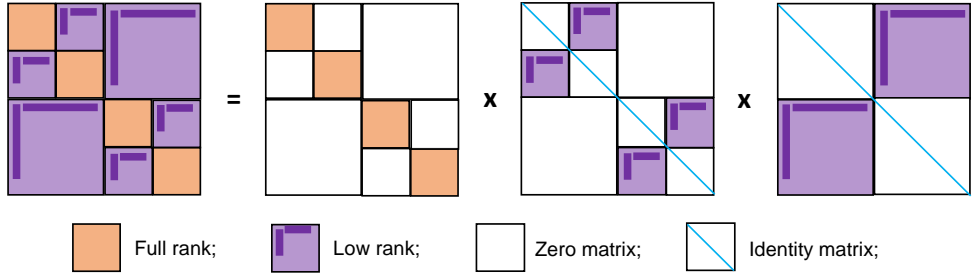}
    \caption{Factorization of a 2-level HODLR matrix.}
    \label{fig:hodlrmatrix_fac}
\end{figure}

Assuming points proximal in space are near each other in the covariance matrix, a major benefit of the HODLR decomposition is that local structure is captured with high fidelity due to the dense diagonal blocks in the approximate covariance matrix. By construction, these diagonal blocks are perfectly preserved. Furthermore, global structure is not ignored, as in covariance tapering methods, but rather approximated via low-rank representations of the off-diagonal blocks. The neighborhood sets of the hierarchical nearest-neighbor GP \cite{datta2016hierarchical} can be thought of similarly as the block diagonal entries (although these neighborhoods need not be all of the same size), but with relationships between neighborhood sets providing an update to this block-sparse structure rather than the subsequent matrix factors in Figure \ref{fig:hodlrmatrix_fac}. 


\section{Bayesian fast increased fidelity approximate GP algorithm}\label{sec:posterior}

\subsection{Gibbs sampler}

Consider the case where $d=1.$ Bayesian GP regression requires estimating the unknown hyperparameters of the covariance kernel. As these parameters vary based upon the kernel chosen, we only consider the squared exponential kernel, i.e., $k(x_i,x_j)= \sigma_f^2 \ \text{exp}( -\rho||x_i-x_j||^2)$, with extensions to other covariance kernels being straightforward. Under this assumption,  let $K_{\sigma_f,\rho}$ denote the $n \times n$ matrix computed from the squared-exponential kernel having hyperparameters $\sigma_f$ and $\rho$, and  evaluated at $\{x_i\in\mathbb{R}\}_{i=1}^n$. 

Let $y \sim N(f,\tau^{-1}),$ with conditionally conjugate priors specified for the parameters, i.e., take $\tau \sim \text{Ga}(a_1/2, b_1/2), $ $1/\sigma_f^2 \sim \text{Ga}(a_2/2, b_2/2),$ and $\rho \sim \sum_{\ell=1}^r r^{-1} \delta_{s_{\ell}},$ a discrete uniform distribution over possible values for $\rho$ in $\{s_1, \ldots, s_r\}.$ Such priors are standard in the literature \cite{banerjee2008gaussian, sang2012full, datta2016hierarchical, wheeler2019bayesian}. Then exact GP regression proceeds by sampling from the following conditional distributions:
\begin{align}
\bm{f} \ | \ - \ &\sim \ \text{N}(K_{\sigma_f,\rho}(K_{\sigma_f,\rho}+\tau^{-1}I)^{-1} \bm{y}, K_{\sigma_f,\rho}(\tau K_{\sigma_f,\rho}+I)^{-1}), \label{sec:posterior_gibs1}\\
\tau \ | \ - \ &\sim \ \text{Ga}\bigg(\frac{a_1+n}{2}, \frac{b_1 + (\bm{y}-\bm{f})^T(\bm{y}-\bm{f})}{2}\bigg), \label{sec:posterior_gibs2}\\
\sigma_f^{-2} \ | \ - \ &\sim \ \text{Ga}\bigg(\frac{a_2+n}{2}, \frac{b_2 + \sigma_f^2 \bm{f}^T K_{\sigma_f,\rho}^{-1} \bm{f}}{2}\bigg),\label{sec:posterior_gibs3} \\
\Pr(\rho=s_h \ | \ -) \ &= \ c \ \{\text{det}(K_{\sigma_f,s_h})\}^{-1/2} \ \text{exp}\Big\{-\frac{1}{2}\bm{f}^T K_{\sigma_f,s_h}^{-1} \bm{f}\Big\}, \label{sec:posterior_gibs4}
\end{align}
where $c^{-1} = \sum_{\ell=1}^r \{\text{det}(K_{\sigma_f,s_{\ell}})\}^{-1/2} \ \text{exp}\Big\{-\frac{1}{2}\bm{f}^T K_{\sigma_f,s_{\ell}}^{-1} \bm{f}\Big\}$.

This sampler requires the computation of the inverse, determinant, and Cholesky decomposition, which are $\mathcal{O}(n^3)$ operations. For what follows, we use the properties of $\mathcal{H}-$matrices to replace these operations with $\mathcal{O}(n\hspace{1mm}\mbox{log}^2\hspace{1mm} n)$ counterparts to develop a fast Gibbs sampler.  In the case where a given $\mathcal{H}-$matrix has already been factorized, e.g. in the case when the covariance kernels can be reused by fixing the length-scale components on a discrete grid and pre-computing the factorization, inverse and determinant computations can be done in $\mathcal{O}(n\hspace{1mm}\mbox{log}\hspace{1mm} n)$ time using the \texttt{HODLRlib} library \cite{ambikasaran2019hodlrlib}.
The discrete prior in step (\ref{sec:posterior_gibs4}) allows for the relevant matrix inverse and determinant to be computed at every grid point before running the Gibbs sampler so as not to have to recompute these values in each step. In practice, this step could be replaced with a Metropolis step for even faster computation when the number of possible values for $\rho$ is large. The Gaussian error sampler can be extended to non-Gaussian errors by defining an appropriate relation between a latent GP and the observed data, e.g., via a probit link for binary data \cite{choudhuri2007nonparametric} or a rounding operator for count data \cite{canale2013nonparametric}.

\subsubsection{Sampling $\bm{f}$}

Consider step (\ref{sec:posterior_gibs1}), which requires computation of the posterior mean
\begin{align*}
    K_{\sigma_f,\rho}(K_{\sigma_f,\rho}+\tau^{-1}I)^{-1} \bm{y}
\end{align*}
and posterior covariance,
\begin{align}
    K_{\sigma_f,\rho}(\tau K_{\sigma_f,\rho}+I)^{-1}. \label{samp:cov}
\end{align}  
Direct computation of these quantities has cubic time complexity. It is possible to use $\mathcal{H}-$matrices to quickly compute the mean \cite{ambikasaran2014fastdirect}, but it is difficult to construct (\ref{samp:cov}) in the $\mathcal{H}-$matrix format, which makes sampling from the standard algorithm difficult.


We propose a sampling algorithm for the GP function posterior that leverages the near linear HODLR operations (specifically matrix-vector products, solutions to linear systems, and applications of symmetric factor to a vector) rather than direct matrix multiplication or inversion. Let $K = K_{\sigma_f,\rho}$ for notational convenience. We approximate the matrices $K$ and $M=\tau K + I$ in (\ref{sec:posterior_gibs1}) by $\mathcal{H}$-matrices $\tilde{K}$ and $\tilde{M}$ respectively. In what follows, we use the relation $K(\tau K + \mathbf{I})^{-1} = (\tau K + \mathbf{I})^{-1}(\tau K^2 + K) (\tau K + \mathbf{I})^{-1}$ to develop our sampling algorithm.


\begin{algorithm}[H]
\SetAlgoLined
\KwResult{Produce an approximate draw from $p(\bm{f}|\bm{y}, X, \Theta)$ using factorizations of cost $\mathcal{O}(n\log^2{}n)$, and matrix-vector product and solve operations
of cost $\mathcal{O}(n\log{}n)$}. 
\KwIn{\\ \qquad \quad $\bm{y}$: Noisy observation of $\bm{f}$.\\
         \qquad \quad $\epsilon$: specified tolerance.\\
         \qquad \quad $B$: Maximum block size.\\}
         \qquad \quad $\{\Theta,\tau\}$: Hyper-parameters.\\
 \eIf{$\tau < 1$}{
    $\epsilon^\ast = \tau \epsilon$\;
   }{
   $\epsilon^\ast = \epsilon$ \;\label{alg1_tol}
  }
 Construct: $\tilde{K} \approx  K$ with  tolerance $\epsilon^\ast/\tau$ using  $\{B,\Theta,\tau\}$ \tcp*{$\mathcal{H}$-matrix construction.}
 Construct: $\tilde{M} \approx  \tau K + I$ with tolerance $\epsilon^\ast$ using $\{B,\Theta,\tau\}$ \tcp*{$\mathcal{H}$-matrix construction.}
 Construct: $W$, such that $\tilde{K}=WW^T$    \tcp*{$\mathcal{H}$-matrix symmetric factorization.}
 Factorize: $\tilde{M}$ for later $\mathcal{H}$-matrix operations \tcp*{$\mathcal{H}$-matrix factorization.}
 Sample:     $\bm{a}, \bm{b} \sim \text{N}(0,I)$ \;
 Let:  $Z = \sqrt{\tau} \tilde{K} \bm{a} + \tilde{W} \bm{b}$ \tcp*{$Z \sim N(0, \tau \tilde{K}^2 + \tilde{K})$}\label{alg1_1}
 Solve: $\tilde{M} w = Z$ \tcp*{$w \sim N(0,\tilde{K}[\tau\tilde{K}+I]^{-1}])$}\label{alg1_2}
 Solve: $\tilde{M} r = \tau \bm{y}$ \tcp*{r = ($\tau\tilde{K}+I)^{-1}\tau \bm{y}$}
 \KwRet{$Z^{\ast\ast} = w + \tilde{K} r$}\;
 \caption{GP sampler using $\mathcal{H}$-matrix}
\end{algorithm}


\addvspace{\baselineskip}
\begin{lemma}\label{thm:alg1}
From Algorithm 1, $Z^{**} \sim \text{N}(\tilde{\mu}_{f}, \tilde{\Sigma}_{f}),$ where $\tilde{\Sigma}_f = \tilde{K}\tilde{M}^{-1}$ and $\tilde{\mu}_f = \tau \tilde{\Sigma}_f \bm{y}$ are defined to be the approximations of the posterior function variance $\Sigma_f$ and mean $\mu_f,$ respectively.
\end{lemma}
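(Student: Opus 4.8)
The plan is to track the distribution of the random vector produced by Algorithm 1 line by line, using only the elementary fact that an affine push-forward of a Gaussian is Gaussian together with the algebraic identity stated just before the algorithm, namely $\tilde K(\tau\tilde K + \mathbf I)^{-1} = (\tau\tilde K+\mathbf I)^{-1}(\tau\tilde K^2 + \tilde K)(\tau\tilde K+\mathbf I)^{-1}$, which holds because $\tilde K$ and $(\tau\tilde K+\mathbf I)^{-1}$ commute (both are polynomials in $\tilde K$, and $\tilde M = \tau\tilde K + \mathbf I$ is invertible since $\tilde K$ is symmetric positive definite). First I would note that $\bm a,\bm b\sim \mathrm N(0,\mathbf I)$ independent, so $Z = \sqrt\tau\,\tilde K\bm a + W\bm b$ is mean-zero Gaussian with covariance $\tau\tilde K\tilde K^T + WW^T = \tau\tilde K^2 + \tilde K$, using symmetry of $\tilde K$ and the defining property $WW^T = \tilde K$ from the symmetric factorization step. (I would flag the typo that line \ref{alg1_1} writes $\tilde W$ where $W$ is meant.)

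Next, $w$ solves $\tilde M w = Z$, i.e. $w = \tilde M^{-1}Z$; this is again a linear map of a Gaussian, so $w$ is mean-zero Gaussian with covariance $\tilde M^{-1}(\tau\tilde K^2 + \tilde K)\tilde M^{-T}$. Since $\tilde M = \tau\tilde K + \mathbf I$ is symmetric, $\tilde M^{-T} = \tilde M^{-1}$, and the commuting identity above collapses this to exactly $\tilde K\tilde M^{-1} = \tilde\Sigma_f$. Then $r = \tilde M^{-1}(\tau\bm y)$ is a deterministic vector (given the conditioning), so $\tilde K r = \tilde K\tilde M^{-1}\tau\bm y = \tau\tilde\Sigma_f\bm y = \tilde\mu_f$. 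Finally $Z^{**} = w + \tilde K r$ is the Gaussian $w$ shifted by the constant $\tilde\mu_f$, hence $Z^{**}\sim\mathrm N(\tilde\mu_f,\tilde\Sigma_f)$, which is the claim. One should also remark that $\tilde\Sigma_f$ is genuinely a valid covariance: it is similar to the symmetric PSD matrix $\tilde M^{-1/2}\tilde K\tilde M^{-1/2}$, hence PSD, so the statement is not vacuous.

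I do not expect a serious obstacle — the argument is a bookkeeping exercise in Gaussian affine transformations. The one point requiring a little care is the justification of $\tilde M^{-1}(\tau\tilde K^2+\tilde K)\tilde M^{-1} = \tilde K\tilde M^{-1}$: this needs the commutativity of $\tilde K$ and $\tilde M^{-1}$ and the symmetry of $\tilde M$, both of which follow from $\tilde K$ being a symmetric PD matrix (the $\mathcal H$-matrix construction and the fact that the symmetric factorization of Ambikasaran et al.\ applies only to symmetric PD matrices guarantee this). A secondary subtlety worth a sentence is that the identity and all the conclusions are purely algebraic statements about the \emph{approximate} matrices $\tilde K,\tilde M$; no approximation error enters here, since the lemma only asserts what Algorithm 1 samples, not how close that is to the true posterior — the fidelity analysis is deferred to the subsequent results.
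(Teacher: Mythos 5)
Your line-by-line tracking of the Gaussian through the algorithm matches the paper's argument, and your covariance propagation (collapsing $\tilde M^{-1}(\tau\tilde K^2+\tilde K)\tilde M^{-1}$ to $\tilde K\tilde M^{-1}$ via commutativity of polynomials in $\tilde K$) is equivalent to, and arguably cleaner than, the paper's symmetry argument. However, there is one load-bearing step you assume rather than prove: you write $\tilde M=\tau\tilde K+\mathbf I$ as if it were a definition, but Algorithm 1 constructs $\tilde M$ as a \emph{separate} $\mathcal H$-matrix approximation of $\tau K+\mathbf I$, not as $\tau\tilde K+\mathbf I$. If $\tilde M$ were merely some independent approximation satisfying $||\tilde M-(\tau K+\mathbf I)||_{\max}\leq\epsilon^\ast$, it need not be a polynomial in $\tilde K$, the commutativity you invoke fails, and $\tilde M^{-1}(\tau\tilde K^2+\tilde K)\tilde M^{-1}$ does not reduce to $\tilde K\tilde M^{-1}$; the lemma as stated would then not follow. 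Everything downstream in your argument --- the symmetry of $\tilde M$, the collapse of the sandwich, even the concluding PSD remark --- rests on this identity.

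The paper's proof devotes roughly half its length to establishing exactly this point. It argues that, because of the tolerance choice in the first branch of the algorithm ($\epsilon^\ast/\tau$ for $\tilde K$ versus $\epsilon^\ast$ for $\tilde M$), because the off-diagonal blocks are compressed by partial-pivoted LU so that scaling by the constant $\tau$ only rescales the factors, and because the HODLR construction preserves the diagonal blocks exactly (so that approximating $A+\mathbf I$ and adding $\mathbf I$ to the approximation of $A$ yield the same matrix), the two constructions coincide: $\tau\tilde K+\mathbf I=\widetilde{\tau K+\mathbf I}=\tilde M$. To close the gap you would either need to reproduce this argument or restate the algorithm so that $\tilde M$ is \emph{defined} as $\tau\tilde K+\mathbf I$. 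With that addition, the remainder of your proof is correct and follows essentially the same route as the paper's.
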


\begin{proof}
See Appendix.
\end{proof} 

The most expensive steps in Algorithm 1 are the $\mathcal{H}$-matrix factorizations of the matrices $\tilde{K}$ and $\tilde{M}$. When using a fixed grid of length-scale parameters, the factorization of $\tilde{K}$ can be pre-computed. If the precision $\tau$ is also fixed (i.e., if the factorization of $\tilde{M}$ can also be pre-computed), the cost of the entire algorithm will be $\mathcal{O}(n\log{}n)$ rather than $\mathcal{O}(n\log^2{}n).$

\textbf{Remark}: There are a number of $\mathcal{H}-$matrix constructions that can be used
in this algorithm, and they may be preferable in certain situations. For example, the $\mathcal{H}^2$-matrix \cite{borm2002data} provides a faster construction for 2 and 3-dimensional inputs, and may be preferable in these cases. 


When discussing computation times above we have assumed the $\mathcal{H}$-matrix decomposition method used is the \texttt{HODLRlib} implementation of the HODLR decomposition. However, any hierarchical decomposition allowing for symmetric matrix factorization and having an $\epsilon$ bound by construction on the max-norm between the approximated and true matrices could be used instead with the associated computation cost being that of the method used.

\subsubsection{Hyperparameter posterior approximation}

Sampling (\ref{sec:posterior_gibs3}) and (\ref{sec:posterior_gibs4}) involves solving the linear system $K\bm{b}=\bm{f}$ for $\bm{b}$ (i.e., finding $K^{-1}\bm{f}$) and finding the determinant of $K$. If matrix $K$ is approximated by a $\mathcal{H}$-matrix $\tilde{K},$ significant computational savings result. The HODLR $\mathcal{H}$-matrix decomposition performs these operations at a cost of $\mathcal{O}(n\log{}n).$ Here $\tilde{K}$ is a factorized HODLR matrix, which has been computed prior to this operation (the factorization itself is an $\mathcal{O}(n\log{}^2n)$ operation).

\subsubsection{Approximation fidelity}

The following theorem concerns the approximation fidelity of $\bm{f}|\bm{y}$ when the component pieces of the GP posterior covariance $\Sigma_f=K(\tau K + \mathbf{I})^{-1}$, $K$ and $M=\tau K + \mathbf{I}$, are calculated using approximations $\tilde{K}$ and $\tilde{M},$ respectively, with maximum absolute difference between the true and approximated matrices being bounded by $\varepsilon$. The proof is general and gives an upper bound on convergence for large $n.$ The implication of this result is that there’s an upper limit to the approximation fidelity when $n$ grows past a certain point, even if the true matrices are used, because of the limits of finite computer arithmetic.

\begin{theorem}\label{thm:approx_fidel}
Let $\bm{p} \sim N(\mu_{f}, \Sigma_{f})$ where $\mu_{f} = \tau \Sigma_{f} \bm{y}$ and $\Sigma_{f}$ is an $n \times n$ positive definite matrix, with $\Sigma_{f} = K (\tau K + I)^{-1}$ for $K$ the $n \times n$ realization of some symmetric covariance kernel, $\bm{y}$ is a length-$n$ vector, and $\tau$ is a constant. Define $M = (\tau K + I)$ such that $\Sigma_{f}=KM^{-1}$. Then there exists matrices $\tilde{K}, \tilde{M} \in \mathcal{H}$ with $||K-\tilde{K}||_{\max} \leq \varepsilon$ and $||M-\tilde{M}||_{\max} \leq \varepsilon$ such that for $\tilde{\Sigma}_{f} = \tilde{K} \tilde{M}^{-1},$ $\tilde{\mu}_{f} = \tau \tilde{\Sigma}_{f} \bm{y}$, and $\bm{q} \sim N(\tilde{\mu}_{f}, \tilde{\Sigma}_{f})$ 
\begin{align}
    \mathcal{D}_{KL}(\mathcal{P}||\mathcal{Q}) = E_{\mathcal{P}} \Bigg[ \mbox{log}\Bigg( \frac{\mathcal{P}}{\mathcal{Q}} \Bigg) \Bigg] \leq  c_1 n^2 \varepsilon + c_2 n^{5/2} \varepsilon + c_3 n^3 \varepsilon^2,
\end{align}
with $\lim_{\varepsilon \to 0} \mathcal{D}_{KL}(\mathcal{P}||\mathcal{Q}) = 0,$ where the density functions of $\bm{p}$ and $\bm{q}$ are denoted by $\mathcal{P}$ and $\mathcal{Q},$ respectively. The constants $c_1,$ $c_2,$ and $c_3$ are dependent on the conditioning of $K$ and $M$. Note that using the \texttt{HODLRlib} library, $\tilde{K}$ and $\tilde{M}$ can be created and factorized in $\mathcal{O}(n\hspace{1mm}\mbox{log}^2(n))$ time.
\end{theorem}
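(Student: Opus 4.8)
The plan is to apply the closed-form expression for the Kullback–Leibler divergence between two $n$-dimensional Gaussians,
$$2\,\mathcal{D}_{KL}(\mathcal{P}||\mathcal{Q}) = \text{tr}\big(\tilde{\Sigma}_f^{-1}\Sigma_f\big) - n + \text{log}\,\text{det}\big(\tilde{\Sigma}_f\big) - \text{log}\,\text{det}\big(\Sigma_f\big) + (\tilde{\mu}_f-\mu_f)^T\tilde{\Sigma}_f^{-1}(\tilde{\mu}_f-\mu_f),$$
and to bound each piece in terms of $n$ and $\varepsilon$. The existence of $\tilde{K},\tilde{M}\in\mathcal{H}$ achieving $||K-\tilde{K}||_{\max}\le\varepsilon$ and $||M-\tilde{M}||_{\max}\le\varepsilon$ is exactly the ``arbitrarily small max-norm by construction'' property of the $\mathcal{H}$-matrices of Section \ref{sec:h_matrices}; the substance of the proof is propagating that entrywise bound through a matrix product, an inverse, two determinants, and a quadratic form.

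First I would convert the entrywise bounds to operator-norm bounds using $||A||_2\le||A||_F\le n||A||_{\max}$, so that $E_K:=\tilde{K}-K$ and $E_M:=\tilde{M}-M$ satisfy $||E_K||_2,||E_M||_2\le n\varepsilon$. Since $K$ is positive semidefinite, $M=\tau K+I$ has eigenvalues at least one, hence $||M^{-1}||_2\le 1$, and $\Sigma_f^{-1}=\tau I+K^{-1}$ is bounded in terms of the conditioning of $K$; a Neumann-series perturbation argument then shows that for $\varepsilon$ small relative to $n$, $\tilde{M}$ is invertible with $||\tilde{M}^{-1}||_2$ bounded by a constant, and $\tilde{\Sigma}_f=\tilde{K}\tilde{M}^{-1}$ is positive definite with $||\tilde{\Sigma}_f^{-1}||_2$ controlled by the conditioning of $K$ and $M$. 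Writing $\tilde{\Sigma}_f-\Sigma_f=(E_K-\Sigma_f E_M)\tilde{M}^{-1}$ and $\tilde{\mu}_f-\mu_f=\tau(\tilde{\Sigma}_f-\Sigma_f)\bm{y}$, I then bound $||\tilde{\Sigma}_f-\Sigma_f||$ in whichever of the max, Frobenius, or spectral norms is convenient at each step by a constant times the appropriate power of $n$ times $\varepsilon$; the varying powers of $n$ that appear in the final bound come precisely from how the max-norm estimate inflates (by a factor of $n$) each time it passes through a matrix product versus when it is routed through the spectral norm of $\tilde{M}^{-1}$.

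Next I would bound the three groups of terms. The trace term equals $\text{tr}(\tilde{\Sigma}_f^{-1}(\Sigma_f-\tilde{\Sigma}_f))$, controlled by $||\tilde{\Sigma}_f^{-1}||_F\,||\Sigma_f-\tilde{\Sigma}_f||_F$; the log-determinant term is $\text{log}\,\text{det}(I+F)$ with $F=\tilde{\Sigma}_f^{-1}(\Sigma_f-\tilde{\Sigma}_f)$, which for $||F||_2<1$ expands as $\text{tr}(F)-\tfrac12\text{tr}(F^2)+\cdots$, giving a part linear in $\varepsilon$ and a remainder bounded by a multiple of $n||F||_2^2$ (the source of the $n^3\varepsilon^2$ term); and the mean term equals $\tau^2\bm{y}^T(\tilde{\Sigma}_f-\Sigma_f)\tilde{\Sigma}_f^{-1}(\tilde{\Sigma}_f-\Sigma_f)\bm{y}$, which I would either bound directly via $||\bm{y}||^2$, $||\tilde{\Sigma}_f^{-1}||_2$, $||\tilde{\Sigma}_f-\Sigma_f||_2$, or expand as $\bm{y}^T(\tilde{\Sigma}_f-\Sigma_f)\bm{y}+\bm{y}^T\Sigma_f\tilde{\Sigma}_f^{-1}(\Sigma_f-\tilde{\Sigma}_f)\bm{y}$ to retain the linear-in-$\varepsilon$ rate. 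Collecting these, absorbing every norm/conditioning constant (of $\Sigma_f^{-1}$, $\tilde{M}^{-1}$, $K$, $\tau$, and $||\bm{y}||^2/n$) into $c_1,c_2,c_3$, and sorting by powers of $n$ and $\varepsilon$ yields the stated bound $c_1 n^2\varepsilon+c_2 n^{5/2}\varepsilon+c_3 n^3\varepsilon^2$; since every summand carries a strictly positive power of $\varepsilon$, $\lim_{\varepsilon\to0}\mathcal{D}_{KL}=0$, and the $\mathcal{O}(n\,\text{log}^2 n)$ construction/factorization claim is the cited \texttt{HODLRlib} guarantee.

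The main obstacle I anticipate is not any single estimate but keeping the constants honest: verifying (i) that $\tilde{\Sigma}_f=\tilde{K}\tilde{M}^{-1}$ is genuinely symmetric positive definite, so that $\text{N}(\tilde{\mu}_f,\tilde{\Sigma}_f)$ is well defined and the Gaussian KL formula applies — which may require taking $\tilde{M}=\tau\tilde{K}+I$ so that $\tilde{K}$ and $\tilde{M}$ commute, or symmetrizing $\tilde{\Sigma}_f$ and absorbing the extra error — and (ii) that $c_1,c_2,c_3$ depend only on $\tau$, $||\bm{y}||^2/n$, and the conditioning of $K$ and $M$, not covertly on $n$. The delicate point is that every perturbation step ($\tilde{M}$ invertible, $\tilde{\Sigma}_f$ positive definite, the log-determinant series convergent) requires $\varepsilon$ to be small relative to $n$, so the result should be read as a statement about $\varepsilon\to0$ for each fixed $n$ rather than a rate uniform in $n$ — which is consistent with the paper's own observation that finite-precision arithmetic caps the achievable fidelity once $n$ is large.
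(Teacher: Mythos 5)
Your proposal follows essentially the same route as the paper's proof: both start from the closed-form Gaussian KL divergence, split it into the log-determinant, trace, and mean-shift terms, convert the entrywise $\varepsilon$ bound into Frobenius/spectral bounds, and use Weyl-type eigenvalue perturbation to keep $\sigma_{\min}(\tilde{K})$ and $\sigma_{\min}(\tilde{M})$ bounded away from zero under the standing assumption $\varepsilon < \sigma_{\min}(K)/n^2$. The only substantive differences are at the level of sub-lemmas: for the determinant ratio the paper invokes the Hoffman--Wielandt inequality to control the product of eigenvalue ratios (obtaining $n\log(1+n\varepsilon)\le n^2\varepsilon$) where you expand $\log\det(I+F)$ as a trace series, and for the trace term the paper expands $\mathrm{tr}[\tilde{M}\tilde{K}^{-1}KM^{-1}]-n$ algebraically into three pieces bounded by entrywise and Cauchy--Schwarz trace inequalities rather than applying a single Frobenius-norm Cauchy--Schwarz to $\mathrm{tr}[\tilde{\Sigma}_f^{-1}(\Sigma_f-\tilde{\Sigma}_f)]$; both routes land on bounds of the same order. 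Your two flagged caveats are exactly the ones the paper acknowledges: positive definiteness of $\tilde{K}\tilde{M}^{-1}$ is not guaranteed for every perturbation (the paper handles this by taking $\tilde{M}=\tau\tilde{K}+I$ in practice and noting the issue in its ``Practical considerations''), and the constants $c_1,c_2,c_3$ are stated to depend on the conditioning of $K$ and $M$, so the bound is not uniform in $n$.
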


\begin{proof}
See Appendix.
\end{proof} 

The proof relies on the assumption that $||\tilde{K}-K||_F$ and $||\tilde{M}-M||_F$ can be bounded to be arbitrarily small. The advantage to using a hierarchical matrix decomposition to approximate $K$ and $M$ is that the norm is often bounded \textit{by construction}. Specifically, for the HODLR decomposition used in this paper the max-norm is bounded on construction \cite{ambikasaran2019hodlrlib}. Therefore, the resulting $F$-norm of the difference between the HODLR approximation and the true matrix is bounded by $n^2$ times the max-norm for each matrix (i.e., for $n \times n$ matrix A, $||\tilde{A}-A||_F \leq n^2 ||\tilde{A}-A||_max$), satisfying the assumption of the proof.



\subsubsection{Additional considerations}
Smooth GPs measured at a dense set of locations tend to have severely ill-conditioned covariance matrices (Section 3.2 of \cite{banerjee2012efficient} provides an excellent discussion of this issue). An $\mathcal{H}$-matrix approximation does not necessarily improve the conditioning relative to the original dense matrix. Typically, practitioners mitigate this ill-conditioning by adding a small nugget term to the diagonal of the covariance matrix. We include this nugget term prior to $\mathcal{H}$-matrix construction.

Algorithm 1 does not require a linear solve involving the $\mathcal{H}$-matrix approximation of $K$, but it does require one involving that of $\tau K + I$. A practical tweak that improves conditioning and doesn't alter the fundamental algorithm is to scale$\bm{y}$ so as to make $\tau$ smaller, and remove this scaling factor in post processing. A smaller $\tau$ makes inverting $\tau K + I$ (and its approximation) more stable with little to no impact on the posterior surface estimates.  It is also possible to combine the ideas in \cite{banerjee2012efficient} with a given $\mathcal{H}$-matrix algorithm so that the dense block diagonals in the factorization of both $\tilde{K}$ and $\tilde{M}$ could be compressed to the desired level of fidelity in order to improve conditioning within each block.

Details on how the function posterior at new inputs can be sampled, an efficient way to handle non-unique input points, and adjustments for heteroskedastic noise are provided in Section \ref{sec:additional_considerations} of the Appendix.

\subsection{Extending to higher dimensional inputs}

Though Lemma \ref{thm:alg1} and Theorem \ref{thm:approx_fidel} are valid for any input dimension, the HODLR factorization doesn’t generally scale well with the number of input dimensions. For an example of this phenomenon, see SI Section \ref{sec:symm_fac_alg}. To take advantage of the fast factorization and linear algebra operations afforded by HODLR when $d=1$, we propose an extension of these algorithms that scales to higher dimensions at $\mathcal{O}(d n\hspace{1mm}\mbox{log}^2 n).$ We model a $d$-dimensional surface as a scaling factor times a tensor product of $1$-dimensional GPs having unit variance:
\begin{gather}
y_i = \beta \ f_1(x_{1,i}) \otimes f_2(x_{2,i}) \otimes \ldots \otimes f_d(x_{d,i}) +e_i, \nonumber \\ 
e_i \sim \text{N}(0,\tau^{-1}), \quad f_h(\cdot) \sim GP(m_h(\cdot),k_h(\cdot,\cdot)), \label{eq:tensor_prod} \\
i=1,\ldots,n, \quad h=1,\ldots,d, \nonumber
\end{gather}
where $\beta$ is the scaling factor and $k_h(\cdot,\cdot)$ is some covariance kernel with unit function variance.


Approximating surfaces using a tensor product of bases is common \cite{deboor1978practical, dierckx1984algorithms, unther1996interpolating, juttler1997surface}. Often, the tensor product takes the form of a finite basis expansion using splines. The flexibility of more traditional tensor product spline approaches carries over to this approach, but one does not have to be concerned with the choice of the knot set to achieve this flexibility (e.g. see de Jong and van Zanten \cite{de2012adaptive} for an example of this using tensor product B-splines and knot set selection). By utilizing lemma 2.1 of de Jong and van Zanten,  it is trivial to show that there exists GP tensor product specifications in the sample path of (\ref{eq:tensor_prod}) that are arbitrarily close to any $d-$dimensional H\"{o}lder space of functions having up to $r$ continuous partial derivatives $D^\alpha,$ $\alpha \leq r$.

\begin{lemma}
Let $\mathcal{C}^r$ be the H\"{o}lder space of functions having $r$ continuous partial derivatives 
and $\epsilon > 0,$ then for any $h \in \mathcal{C}^r$ there exists a tensor product GP  $f = f_1 \otimes f_2 \otimes \ldots \otimes f_d$ with sample paths such that 
such that 
\begin{align*}
   Pr( \mid\mid h - f \mid \mid_\infty \leq \epsilon ) > 0
\end{align*}
\end{lemma}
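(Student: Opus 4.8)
The plan is to reduce the multivariate, tensor-product statement to the well-understood univariate case via the cited Lemma~2.1 of de Jong and van Zanten \cite{de2012adaptive}, and then to assemble the one-dimensional approximations into a tensor-product approximation using the product structure of the GP support. First I would recall the key structural fact about GP sample paths: for a centered GP $f_h$ with a sufficiently regular covariance kernel $k_h$ (e.g.\ squared exponential, or a Mat\'ern kernel with smoothness exceeding $r$), the support of $f_h$ in the sup-norm topology on a compact domain contains the closure of the reproducing-kernel Hilbert space $\mathbb{H}_h$ associated with $k_h$; in particular, for any continuous function $g_h$ that can be approximated in sup-norm by elements of $\mathbb{H}_h$ and any $\delta>0$ one has $\Pr(\|f_h - g_h\|_\infty \le \delta) > 0$. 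By the de Jong--van Zanten result, the relevant RKHS (or its closure) is rich enough to approximate arbitrary functions in the H\"older class $\mathcal{C}^r$ to any desired accuracy, so for each coordinate we have positive-probability sup-norm neighborhoods around suitable one-dimensional factors.

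The second step is the decomposition of the target. Given $h \in \mathcal{C}^r$ on a compact box $[0,1]^d$ and $\epsilon>0$, I would first approximate $h$ by a finite sum of products of one-dimensional functions, $h \approx \sum_{j=1}^J \prod_{\ell=1}^d g_{j,\ell}$, with sup-norm error at most $\epsilon/2$; such separable approximations exist by a standard Stone--Weierstrass-type argument (or, more concretely, via tensor-product spline approximation as in de Jong--van Zanten, which additionally keeps each factor in the appropriate smoothness class). For the statement as literally written --- a single tensor product $f = f_1 \otimes \cdots \otimes f_d$ --- it in fact suffices to take $J=1$ is not generally possible for arbitrary $h$, so the honest reading is that the \emph{process} in \eqref{eq:tensor_prod} includes a sum (indexed by latent basis components) or, at minimum, that we only need a rank-one target; I would state explicitly which version is being proven and, assuming the rank-one target case for the bare tensor product, note that $h = h_1 \otimes \cdots \otimes h_d$ with each $h_\ell \in \mathcal{C}^r$ suffices, deferring the general additive case to the subsequent section's infinite-basis construction.

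The third step is to propagate the coordinatewise approximations through the product. Having chosen factors $h_1,\dots,h_d$ with $\|h_\ell\|_\infty \le C$ and $\prod_\ell h_\ell$ within $\epsilon/2$ of $h$, pick $\delta>0$ small enough that $\|f_\ell - h_\ell\|_\infty \le \delta$ for all $\ell$ forces $\|\prod_\ell f_\ell - \prod_\ell h_\ell\|_\infty \le \epsilon/2$; this is a routine telescoping estimate of the form $\|\prod f_\ell - \prod h_\ell\|_\infty \le \sum_\ell (C+\delta)^{\ell-1} C^{d-\ell} \delta$, which is $O(\delta)$ for fixed $d$ and $C$. Then by independence of the $d$ processes, $\Pr(\|f - h\|_\infty \le \epsilon) \ge \Pr\big(\bigcap_\ell \{\|f_\ell - h_\ell\|_\infty \le \delta\}\big) = \prod_\ell \Pr(\|f_\ell - h_\ell\|_\infty \le \delta) > 0$, each factor being strictly positive by the support argument of Step~1. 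Absorbing the scalar $\beta$ is immediate (rescale $h_1$, or note $\beta$ is a free positive constant).

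The main obstacle --- and the place where care is genuinely required rather than routine bookkeeping --- is Step~1: verifying that the sup-norm support of each $f_\ell$ actually contains the needed functions. This requires that the covariance kernel $k_h$ be chosen so that its RKHS is dense (in sup-norm on the compact domain) in a space large enough to contain the $\mathcal{C}^r$ factors, which is exactly the content one extracts from Lemma~2.1 of \cite{de2012adaptive} together with a concentration-function / small-ball argument in the spirit of van der Vaart and van Zanten; the squared-exponential kernel with a randomized or appropriately small length-scale, or a Mat\'ern kernel of smoothness $> r$, are the standard choices that make this work, and I would state the kernel hypothesis on $k_h$ explicitly as part of the lemma rather than leaving it implicit. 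The remaining subtlety is ensuring the separable approximation in Step~2 keeps each factor inside that support class (finite H\"older norm), which the spline construction handles cleanly.
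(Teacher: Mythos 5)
Your proposal follows the same skeleton as the paper's argument --- Lemma~2.1 of de Jong and van Zanten to get a separable (spline) approximation of $h$, plus a support/density property of the univariate GPs to put positive mass near the chosen factors --- but the paper's entire proof is two sentences: it asserts that tensor-product B-splines come within $\epsilon$ of any continuous surface and that ``there are a large number of covariance kernels dense in the space of continuous functions,'' citing Tokdar and Ghosh. Everything you supply in Steps~1 and~3 (the RKHS-closure characterization of the sup-norm support, the telescoping bound $\|\prod f_\ell - \prod h_\ell\|_\infty = O(\delta)$, and the use of independence to multiply the positive probabilities) is absent from the paper, and those are precisely the steps where the real work lives; your version is the proof the paper should have written.

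Your second-step caveat is not merely a stylistic hedge: it identifies a genuine defect in the lemma as stated. A tensor-product B-spline surface is a \emph{sum} $\sum_{j,k} c_{jk} B_j(x_1) B_k(x_2)$ with an arbitrary coefficient tensor, whereas a single product $f_1 \otimes \cdots \otimes f_d$ of univariate sample paths is rank one; for $d \ge 2$ a non-separable $h$ (e.g.\ $h(x_1,x_2) = x_1 + x_2$) cannot be uniformly approximated by rank-one functions to arbitrary accuracy, so the conclusion $\Pr(\|h-f\|_\infty \le \epsilon) > 0$ fails for small $\epsilon$ unless $h$ is itself (close to) separable or the model is read as an additive sum of tensor products. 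The paper's own simulation of $g_2$ concedes exactly this point, yet its proof of the lemma elides it. Your explicit restriction to the rank-one target case (deferring the general $h$ to the additive/multi-component formulation) is the honest reading, and stating the kernel hypothesis on each $k_h$ explicitly, as you propose, is also an improvement over the paper's unqualified ``large number of covariance kernels.''
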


This follows from the fact that B-splines are continuous functions and that by lemma $2.1$ of \cite{de2012adaptive} there exists B-splines such that their tensor product is within an $\epsilon$ ball of any continuous surface. The result then follows because there there are a large number of covariance kernels $k(\cdot,\cdot)$ that are dense in the space of continuous functions (see for example \cite{tokdar2007posterior}). This lemma shows that (\ref{eq:tensor_prod}) provides a flexible solution to 
modeling higher dimensional surfaces. 

In practice, the actual covariance kernel used may not be adequate to model a given data-set (e.g. there may be a high prior-probability placed on sample paths that are too smooth and/or the prior over the hyperparameters too restrictive). In these cases, one tensor product may over smooth the data and miss local features of the surface; here, multiple additive tensor products may be considered to alleviate this problem. This was the strategy of \cite{wheeler2019bayesian} who used an additive sum of GP tensor products like those defined in (\ref{eq:tensor_prod}). It is also the strategy in most standard spline based approaches, and it is often used in hierarchical grid refinements for tensor product B-splines (e.g. see  \cite{juttler1997surface}); however,  the choice of an appropriate knot set for each additive component is not necessary when using FIFA GPs.  

$H-$matrices can be used to approximate the GP in equation (\ref{eq:tensor_prod}) and provide the previously discussed benefits of speed and near-machine-precision fidelity. Sampling from the function posterior for each $f_h(\cdot)$ involves heteroskedastic noise; and one must modify the KL divergence bound as well as the sampling algorithm. To see why, note that $f_h(x_{h,i}) = \frac{y_i - e_i}{\prod_{k \neq h} f_k(x_{k,i})}$. Then $\frac{y_i}{\prod_{k \neq h} f_k(x_{k,i})}$ is a noisy observation of $f_h(x_{h,i})$ with variance $\tau^{-1}/(\prod_{k \neq h} f_k(x_{k,i}))^2$.

This approach also provides additional computational advantages if the number of unique input values is smaller than $n$ in a given dimension. For example, consider observations made on a $500 \times 500$ grid of inputs. Then computation proceeds rapidly even though the total number of observations is 250,000. Such computation gains are not unique to the tensor product approach, but are also available using additive kernels \cite{duvenaud2011additive, durrande2012additive} or Kronecker based inference \cite{flaxman2015fast}; however, these approaches do not have the computational advantages of $H-$matrix arithmetic and thus still scale with cubic complexity.  


 The tensor product of GP realizations is different than a GP with a separable covariance kernel comprised of the tensor product of univariate kernels, as in \cite{karol2014small}. The latter also offers computational advantages, but has the same issues for large covariance matrices.  Finally, the tensor product approach in (\ref{eq:tensor_prod}) is one way in which univariate GPs can be used to model higher dimensional surfaces. In some cases, an additive model may suffice \cite{duvenaud2011additive, durrande2012additive}.

\section{Simulation study}\label{sec:simulation}

In this section, we report performance and timing of each method for a small-$n$ simulation using data generated from a Gaussian process and a large-$n$ simulation in which the true generating function is known but not a GP.  All calculations in this and the subsequent section are performed on a 2016 MacBook Pro with a 2.9 GHz Intel Core i7 processor. 

\subsection{Small-$n$ simulation}\label{sec:simulation_smalln}

We compare the performance of the approximate methods to that of an exact GP sampler. Of interest is both similarity of point estimates to the exact GP and fidelity of uncertainty about those estimates. The experiment with synthetic data proceeds as follows for $n \in \{100,500,1000\}$ and $n^*=50$:
\begin{enumerate}
    \item Points $x_1,\ldots,x_n$ are simulated from a $\text{N}_{[-2,2]}(0,1)$ distribution, so observed data are more concentrated about the origin. 
    \item A ``true'' function $\bm{f}^{\text{true}}$ is simulated from a Gaussian process having an exponential covariance function $k(x_i,x_j) = \sigma_f^2 \ \text{exp}(-\rho||x_i-x_j||^2)+\tau^{-1}\delta_{ij}.$ Note that the true values of $\sigma_f,$ $\rho,$ and $\tau$ are varied in each configuration.
    \item A sampler based on the exact GP covariance matrix is run to get exact Bayesian estimates of the posterior for the hyperparameters $\sigma_f,\rho,\tau,$ the function $\bm{f}$ at training points $\{x_i\}_{i=1}^n,$ and the function $\bm{f}^*$ at test points $\{x_{i^*}\}_{i^*=1}^{n^*}.$ These estimates are defined as the ``exact posterior.'' 
    \item Each of the approximate methods are used to obtain samples from the posterior for the hyperparameters $\sigma_f,\rho,\tau,\bm{f}$, and $\bm{f}^*$, referred to as ``approximate posterior'' samples.
    \item For each sampler, the first 2,000 samples are discarded as burn-in and every tenth draw of the next 25,000 samples is retained.
\end{enumerate}


\afterpage{%

\begin{landscape}

\begin{table}[!phtb]
    \begin{tabular}{l c c c c c c c c }\toprule
        & & Truth & Exact GP & \multicolumn{2}{c}{FIFA-GP}  & \multicolumn{2}{c}{Compressed GP} \\
        \cmidrule(r){3-3} \cmidrule(r){4-4}\cmidrule(l){5-6}\cmidrule(l){7-8}
        &&& & $\epsilon_{\text{max}}=10^{-14}$ & $\epsilon_{\text{max}}=10^{-10}$ & $\epsilon_{\text{fro}}=0.01$ & $\epsilon_{\text{fro}}=0.1$ \\\midrule
        \rule{0pt}{4ex}Smooth and & $\text{MSPE}_{f^*}$ & - & 1e-04 & 1e-04 & 1e-04 & 2e-04 & 2e-04 \\
            low noise & 95\% CI ($\bm{f}^*$) Area & - & 0.27 & 0.27 & 0.27 & 0.29 & 0.30 \\
            &$\hat{\tau} \ [\tau_{\text{ll}},\tau_{\text{ul}}]$ & $\tau = 30$ & 28.3 [25.9, 30.9] & 28.3 [25.9, 30.8] & 28.3 [25.9, 30.8] & 28.4 [26.0, 31] & 28.4 [25.9, 30.9] \\
             &$\hat{\sigma}_f \ [\sigma_{f,\text{ll}},\sigma_{f,\text{ul}}]$ & $\sigma_f=1$ & 1.04 [0.52, 2.27] & 0.92 [0.51, 1.85] & 0.98 [0.52, 2.06] & 0.72 [0.44, 1.22] & 0.70 [0.43, 1.17] \\
             &$\hat{\rho} \ [\rho_{\text{ll}},\rho_{\text{ul}}]$ & $\rho=0.25$ & 0.33 [0.15, 0.69] & 0.35 [0.17, 0.70] & 0.33 [0.16, 0.65] & 0.68 [0.56, 0.77] & 0.76 [0.70, 0.80] \\
             & Time (min) & - & 96.7 & 8.7 & 8.0 & 6.2 & 6.0 \\
        \rule{0pt}{4ex}Smooth and & $\text{MSPE}_{f^*}$ & - & 0.008 & 0.008 & 0.008 & 0.007 & 0.007 \\
            high noise & 95\% CI ($\bm{f}^*$) Area & - & 0.83 & 0.85 & 0.85 & 0.98 & 0.95 \\
            &$\hat{\tau} \ [\tau_{\text{ll}},\tau_{\text{ul}}]$ & $\tau = 2$ & 2.25 [2.06, 2.45] & 2.26 [2.07, 2.46] & 2.26 [2.06, 2.46] & 2.26 [2.07, 2.47] & 2.26 [2.06, 2.46] \\
             &$\hat{\sigma}_f \ [\sigma_{f,\text{ll}},\sigma_{f,\text{ul}}]$ & $\sigma_f=1$ & 1.01 [0.53, 2.01] & 0.98 [0.52, 1.88] & 0.97 [0.50, 1.98] & 0.89 [0.51, 1.73] & 0.87 [0.50, 1.49] \\
             &$\hat{\rho} \ [\rho_{\text{ll}},\rho_{\text{ul}}]$ & $\rho=0.25$ & 0.23 [0.13, 0.53] & 0.27 [0.13, 0.85] & 0.26 [0.13, 0.68] & 0.68 [0.55, 0.82] & 0.60 [0.55, 0.71] \\
             & Time (min) & - & 94.5 & 8.7 & 8.0 & 6.1 & 6.2  \\
        \rule{0pt}{4ex}Wiggly and & $\text{MSPE}_{f^*}$ & - & 0.001 & 0.001 & 0.001 & 0.001 & 0.001 \\
            low noise & 95\% CI ($\bm{f}^*$) Area & - & 0.37 & 0.37 & 0.37 & 0.40 & 0.39 \\
            &$\hat{\tau} \ [\tau_{\text{ll}},\tau_{\text{ul}}]$ & $\tau = 30$ & 30.6 [28.0, 33.4] & 30.6 [28.0, 33.4] & 30.6 [28.0, 33.3] & 30.7 [28.0, 33.5] & 30.7 [28.1, 33.4] \\
             &$\hat{\sigma}_f \ [\sigma_{f,\text{ll}},\sigma_{f,\text{ul}}]$ & $\sigma_f=1$ & 1.45 [0.90, 2.50] & 1.41 [0.87, 2.39] & 1.50 [0.90, 2.91] & 1.14 [0.80, 1.69] & 1.17 [0.79, 1.74] \\
             &$\hat{\rho} \ [\rho_{\text{ll}},\rho_{\text{ul}}]$ & $\rho=2$ & 1.70 [1.20, 2.50] & 1.74 [1.22, 2.60] & 1.69 [1.14, 2.55] & 2.54 [2.54, 2.54] & 2.44 [2.35, 2.53] \\
             & Time (min) & - & 96.0 & 9.6 & 8.6 & 7.9 & 7.3 \\
         \rule{0pt}{4ex}Wiggly and & $\text{MSPE}_{f^*}$ & - & 0.013 & 0.013 & 0.013 & 0.013 & 0.013 \\
            high noise & 95\% CI ($\bm{f}^*$) Area & - & 1.32 & 1.33 & 1.33 & 1.30 & 1.30 \\
            &$\hat{\tau} \ [\tau_{\text{ll}},\tau_{\text{ul}}]$ & $\tau = 2$ & 2.09 [1.91, 2.28] & 2.09 [1.91, 2.28] & 2.09 [1.91, 2.29] & 2.09 [1.91, 2.28] & 2.09 [1.91, 2.28] \\
             &$\hat{\sigma}_f \ [\sigma_{f,\text{ll}},\sigma_{f,\text{ul}}]$ & $\sigma_f=1$ & 0.89 [0.57, 1.48] & 0.93 [0.56, 1.64] & 0.91 [0.58, 1.59] & 0.92 [0.58, 1.51] & 0.92 [0.60, 1.52] \\
             &$\hat{\rho} \ [\rho_{\text{ll}},\rho_{\text{ul}}]$ & $\rho=2$ & 2.61 [1.67, 3.73] & 2.48 [1.56, 3.78] & 2.56 [1.62, 3.79] & 2.43 [2.41, 2.50] & 2.42 [2.36, 2.49] \\
             & Time (min) & - & 95.3 & 10.8 & 9.7 & 7.6 & 8.0 \\
        \\ \bottomrule
    \end{tabular}
    \caption{Performance results, parameter estimates, and computing time for Bayesian GP regression with $n=1000$ data points simulated using a squared exponential covariance kernel. Time shown is total time for setup and 27,000 iterations through Gibbs sampler, with 2,500 samples retained.}\label{tab:sim_smalln}
\end{table} 

\end{landscape}

}

Table \ref{tab:sim_smalln} shows simulation results for $n=1000$. In the table, $\text{MSPE}_{f^*} = \frac{1}{n^*}\sum_{i=1}^{n^*}(f^*_i-\hat{f}^*_i)^2$ summarizes predictive performance for determining the true function mean at test points and 95\% CI ($\bm{f}^*$) Area summarizes the geometric area covered by the 95\% credible interval about $\bm{f}^*$. Hyperparameter summary  $\hat{\theta}=\frac{1}{T}\sum_{t=1}^T \theta^{(t)}$ provides the mean of the $T$ samples, where $\theta^{(t)}$ is the $t$-th draw of $\theta,$ and $[\theta_{\text{ll}},\theta_{\text{ul}}]$ gives the associated 95\% pointwise posterior credible interval. The key take away from Table \ref{tab:sim_smalln} is that inference made using FIFA-GP is extremely similar to that made using the exact GP for GP hyperparameters, function posterior, and the noise precision. The compressed GP has similar predictive performance and noise precision estimates, but inference on the GP hyperparameters and area of uncertainty about the function posterior differs from those quantities as measured by the exact GP. Analogous tables for $n=100$ and $n=500,$ and replicate samplers run with the same data for $n=100,$ shown in SI Section \ref{sec:sim_replicates}, provide further empirical support that the exact GP and FIFA-GP behave almost identically in terms of both predictive performance and inference. 


\subsection{Large-$n$ simulation}\label{sec:simulation_largen}

We compare the predictive performance and the computing time of each approximation method using a larger simulated data set, which was not based on a true underlying Gaussian process model. In this simulation, the true function is $f(x) = \text{sin}(2x) + \frac{1}{8}e^x$ and input data are sampled from a $\text{N}_{[-2,2]}(0,1)$ distribution. Values of $y$ are sampled having mean $f$ and precision $\tau=1.$

Parameter estimates and performance results analogous to those provided in the small-$n$ simulation are provided in SI Sections \ref{SIsec:large_n_compare} and \ref{SIsec:large_n_fifa}. As when data were simulated using a true GP for the mean, parameter estimates for FIFA-GP are similar to that of the exact GP sampler. As $n$ increases, both the precision and accuracy of FIFA-GP increase. Figure \ref{fig:simLargeNtiming} shows the time taken (in minutes) to iterate through 100 steps of the sampler for the exact GP and that of the approximated methods with a grid of 100 possible length scale values comprising the options for the discrete sampling step. The FIFA-GP method remains computationally tractable even when $n=$ 200,000. The cost of the pre-computation steps (i.e., projection construction) scales poorly for the compressed GP method.

\begin{figure}[htp]
    \centering
    \includegraphics[width=.8\linewidth]{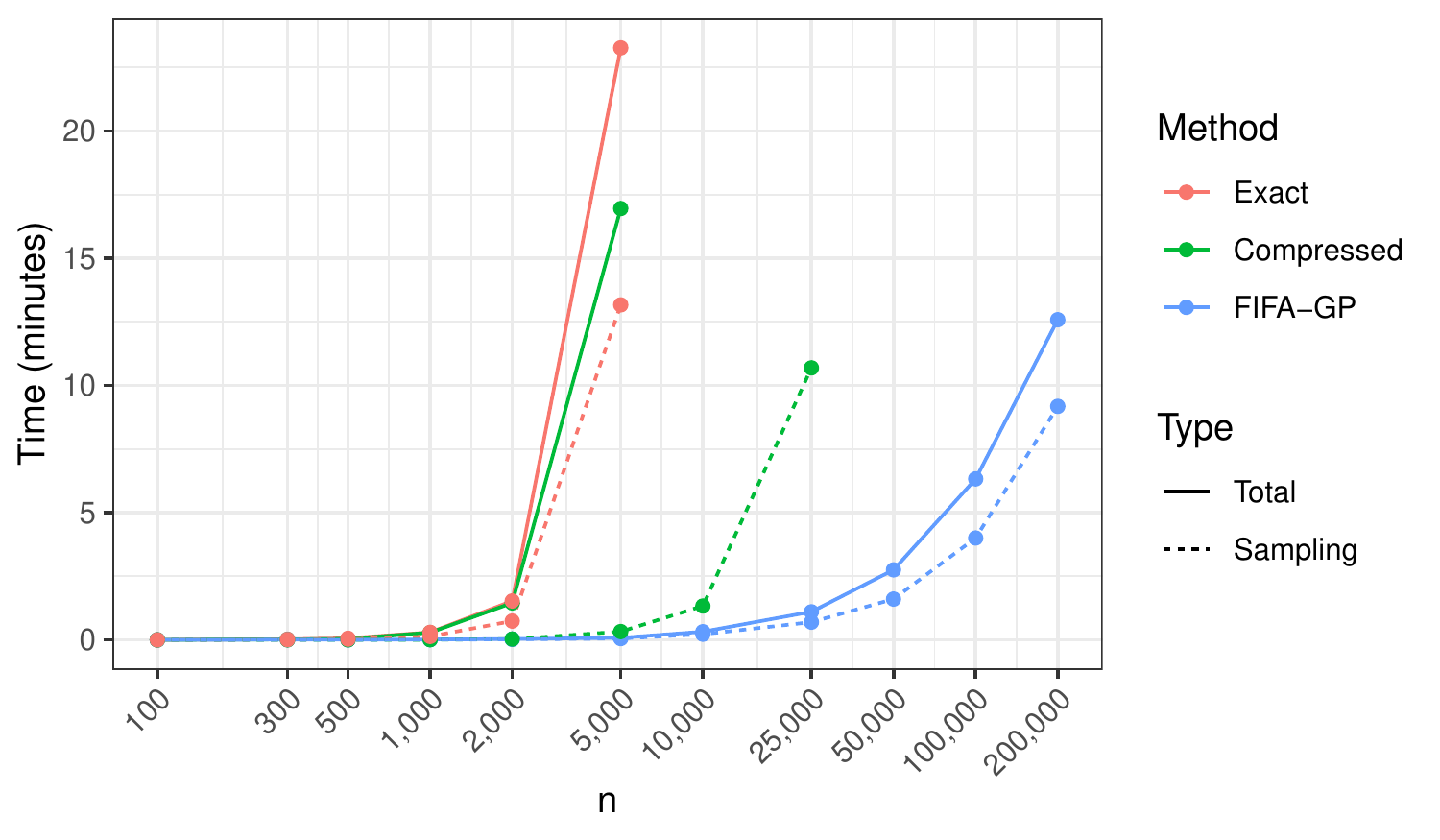}
    \caption{Time taken for 100 samples from full Bayesian posterior. Type ``Total'' includes the time taken to pre-create the matrices for the discrete uniform grid of length-scale values; ``Sampling'' only includes the time taken during the sampling phase after these setup computations have occurred.}
    \label{fig:simLargeNtiming}
\end{figure}

\subsection{Tensor product simulation}\label{sec:simulation_largen}

In order to illustrate the capability of the tensor product approach defined in equation (\ref{eq:tensor_prod}), we simulate from two functions having $d=2.$ The first function, $g_1(x_1,x_2) = \sin(x_1) \sin(x_2) \sqrt{x_1 x_2},$ is separable into a function of $x_1$ multiplied by a function of $x_2$ and thus should be easily approximated by the tensor product formulation. The second function, $g_1(x_1,x_2) = x^2 - 2xy+3y+2,$ is not separable and thus may not be well approximated by a single tensor product term. For each function, draws of $x_1$ and $x_2$ are sampled from $\mathcal{U}(0,4)$ and noisy observations of the functions are made with precision $\tau=0.5$. The number of observations is varied, with $n\in\{100, 500, 1000, 2000, 5000, 10000, 25000\}.$

Figure \ref{fig:tp_func1} shows the noisy observations of $g_1$ and model estimated surface for varying $n.$ Figure \ref{fig:tp_mspe1} shows the MSPE for surface estimates at test points. The model is able to learn the surface reasonably well, even with small $n$. Coverage of the 95\% credible interval is near nominal (0.962, 0.949, and 0.946 for $n=500$, $n=2000$, and $n=25000$, respectively). Furthermore, the model is salable to large $n$ because it relies on one-dimensional HODLR-approximated GPs.

\begin{figure}[!htb]
  \centering
  \includegraphics[width=.3\linewidth]{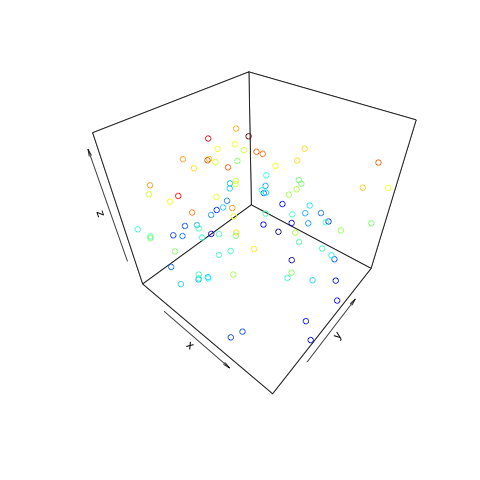}
  \includegraphics[width=.3\linewidth]{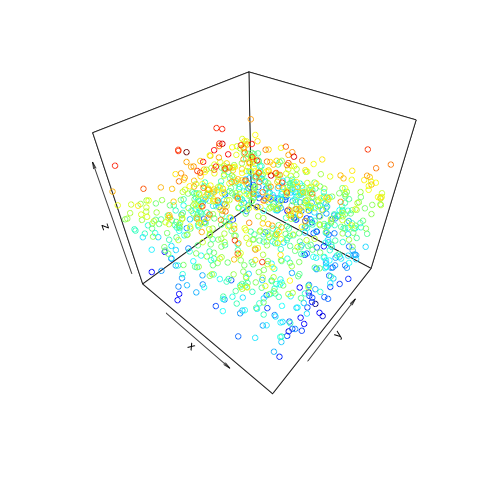}
  \includegraphics[width=.3\linewidth]{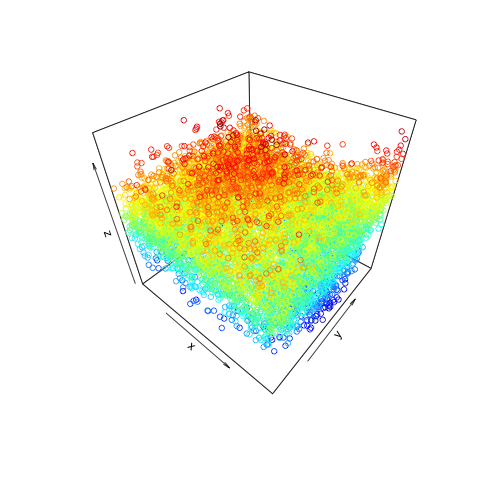}
  \includegraphics[width=.3\linewidth]{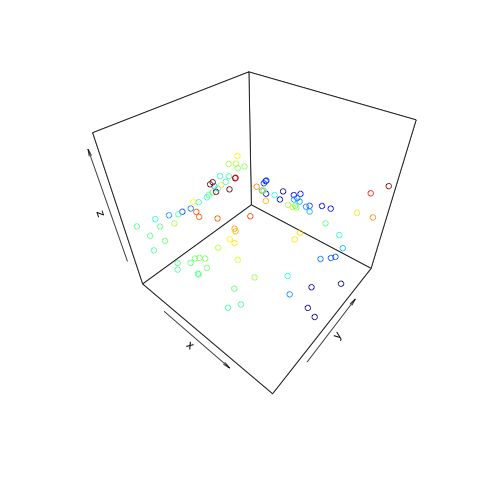}
  \includegraphics[width=.3\linewidth]{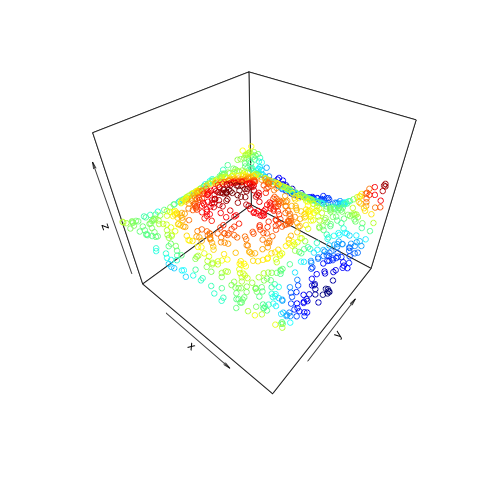}
  \includegraphics[width=.3\linewidth]{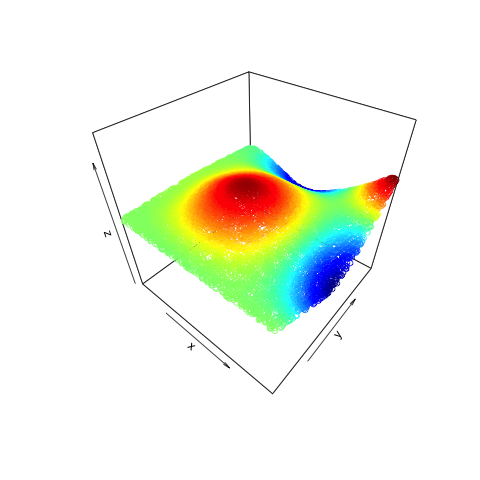}
  \caption{Top: Noisy observations. Bottom: Estimated surface from tensor product model. Left to right: $n=100$, $n=1000$, and $n=10000$.}
  \label{fig:tp_func1}
\end{figure}

\begin{figure}[!htb]
  \centering
  \includegraphics[width=.7\linewidth]{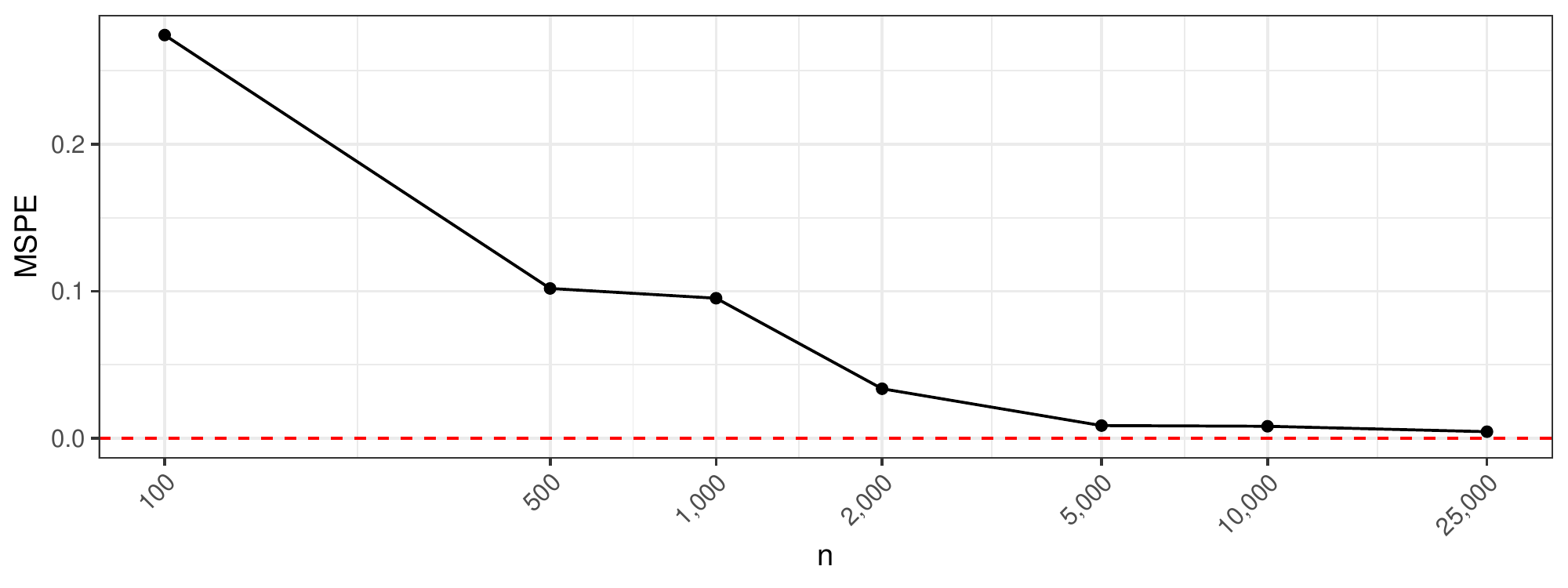}
  \caption{MSPE of the tensor product surface for predicting $g_1$ at new inputs.}
  \label{fig:tp_mspe1}
\end{figure}


Figure \ref{fig:tp_func2} shows the noisy observations of $g_2$ and model estimated surface for varying $n,$ with either 1 or 2 additive basis components used. Figure \ref{fig:tp_mspe2} shows the MSPE for surface estimates at test points. In this example, $g_2$ is poorly approximated by a single tensor product component, i.e. by the model $y_i = f_1(x_{1,i}) \otimes f_2(x_{2,i}) + e_i$, which may be due to choice of co-variance kernel or prior.  When a second tensor product basis as added, i.e. when we model the data as $y_i = f_1(x_{1,i}) \otimes f_2(x_{2,i}) + h_1(x_{1,i}) \otimes h_2(x_{2,i}) + e_i$, the model is flexible enough to capture the true surface. Coverage of the 95\% credible interval is near nominal (0.956, 0.949, 0.947 for $n=500$, $n=2000$, and $n=25000$, respectively).

\begin{figure}[!htb]
  \centering
  \includegraphics[width=.3\linewidth]{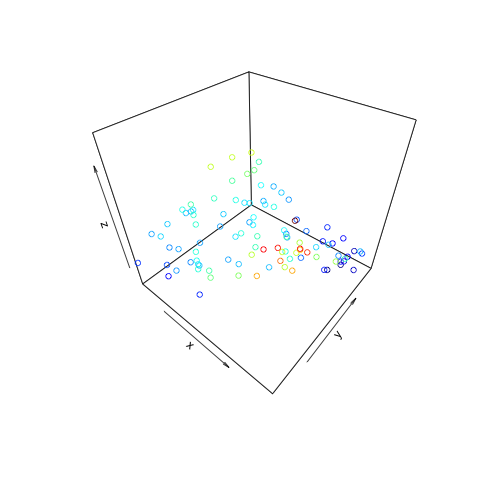}
  \includegraphics[width=.3\linewidth]{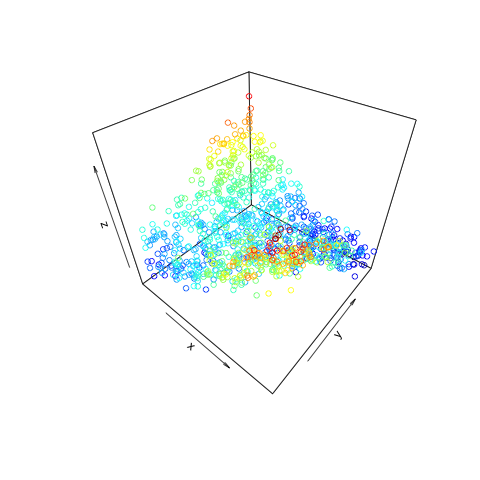}
  \includegraphics[width=.3\linewidth]{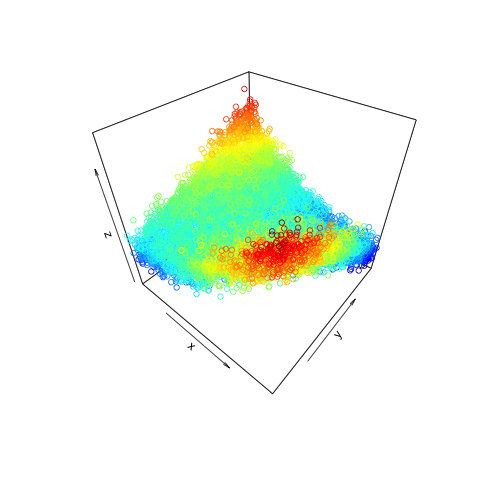}
    \includegraphics[width=.3\linewidth]{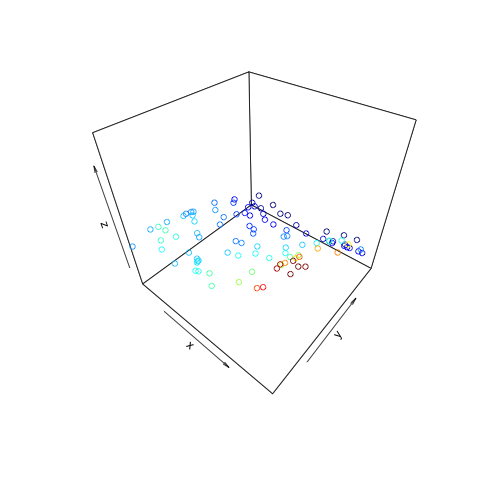}
  \includegraphics[width=.3\linewidth]{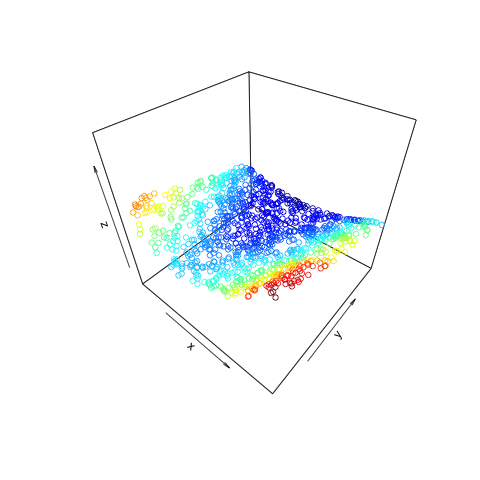}
  \includegraphics[width=.3\linewidth]{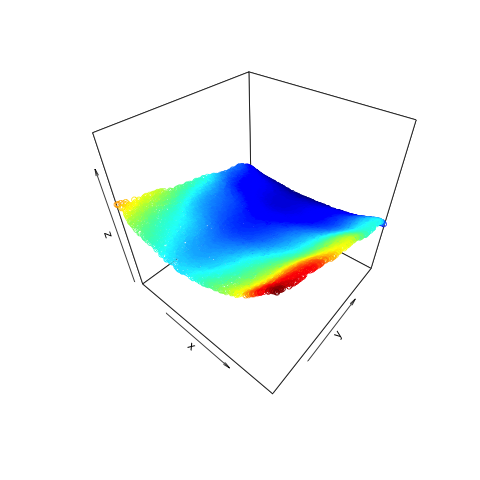}
  \includegraphics[width=.3\linewidth]{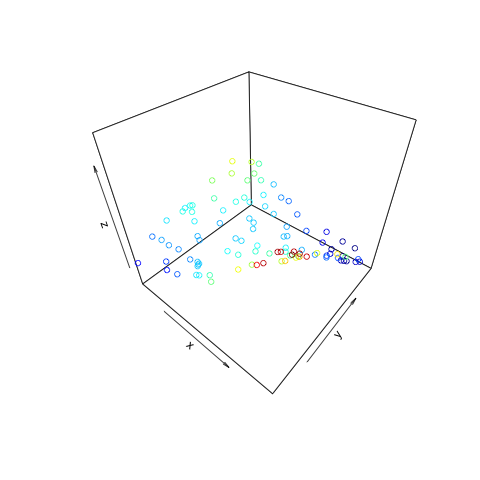}
  \includegraphics[width=.3\linewidth]{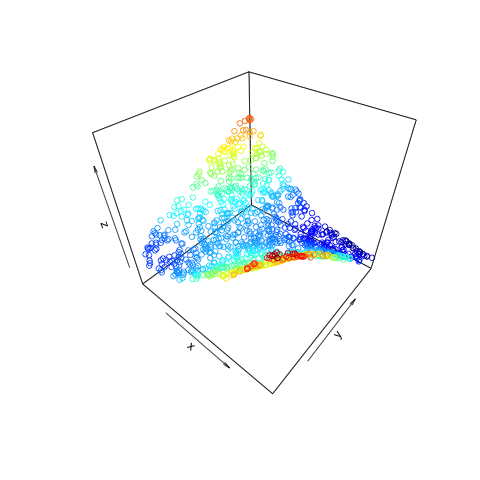}
  \includegraphics[width=.3\linewidth]{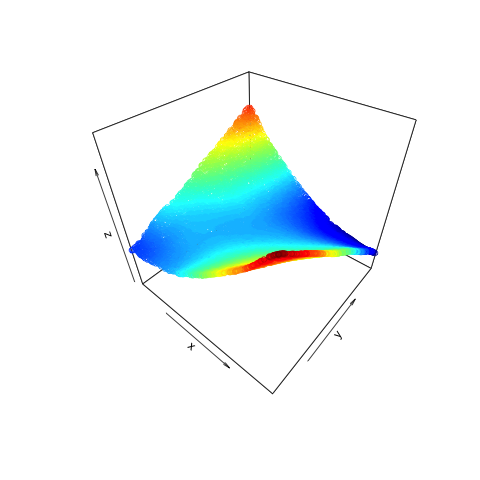}
  \caption{Top: Noisy observations. Middle (bottom): Estimated surface from tensor product model with one (two) additive basis term(s). Left to right: $n=100$, $n=1000$, and $n=10000$.}
  \label{fig:tp_func2}
\end{figure}

\begin{figure}[!htb]
  \centering
  \includegraphics[width=.8\linewidth]{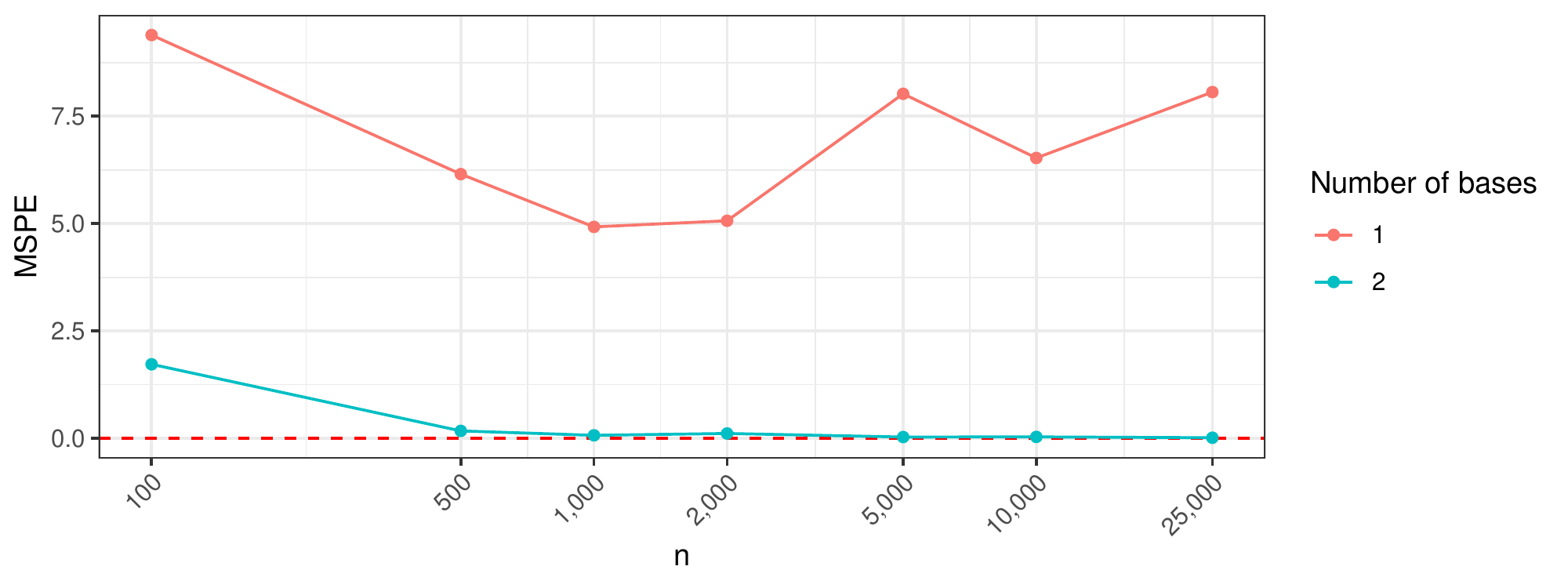}
  \caption{MSPE of the tensor product surface for predicting $g_2$ at new inputs with 1 or 2 additive bases.}
  \label{fig:tp_mspe2}
\end{figure}


\section{Illustrations using real data}\label{sec:real_data}

We consider two data examples to show how multidimensional surfaces can be modeled using FIFA-GP. As we only include a spacial process and do not include relevant covariates,  we do not claim the estimates to be optimal;  rather they are illustrative of how the methodology can be used to model flexible functional forms for extremely large data sets
on relatively modest computer hardware. 



\subsection{Atmospheric carbon dioxide}

NOAA started recording carbon dioxide (CO2) measurements at Mauna Loa Observatory, Hawaii in May of 1974. The CO2 data are measured as the mole fraction in dry air in units of parts per million (ppm). Hourly data are available for download from 1974 to the present at \url{https://www.esrl.noaa.gov/gmd/dv/data/index.php?parameter_name=Carbon%2BDioxide&frequency=Hourly%2BAverages&site=MLO}. 
To illustrate the utility of the proposed method, we use the full data for which the quality control flag indicated no obvious problems during collection or analysis (358,253 observations). We model the year-season surface as the sum of a GP for the annual effect, a GP for the seasonal effect, and a tensor product of GPs for year and season. For the tensor product, the one dimensional GP's had just under $9,000$ observations in each dimension.  For this example, the full MCMC algorithm took under thirty minutes. 

Figure \ref{fig:tp_mlo_surface} shows the observed and model-predicted year-season CO2 surfaces. There is an evident increase in CO2 across years along with a seasonal pattern, with a peak in early summer and a trough in the fall. The tensor product spline basis allows for variation in the seasonal shape that varies smoothly with year.

\begin{figure}[hptb]
	\centering
	\includegraphics[width=.45\linewidth]{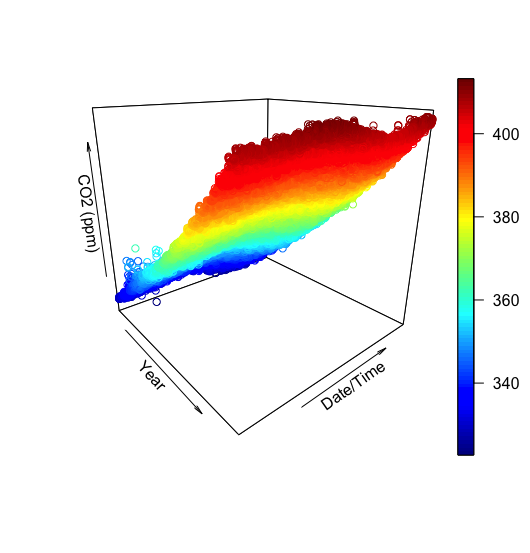}
	\includegraphics[width=.45\linewidth]{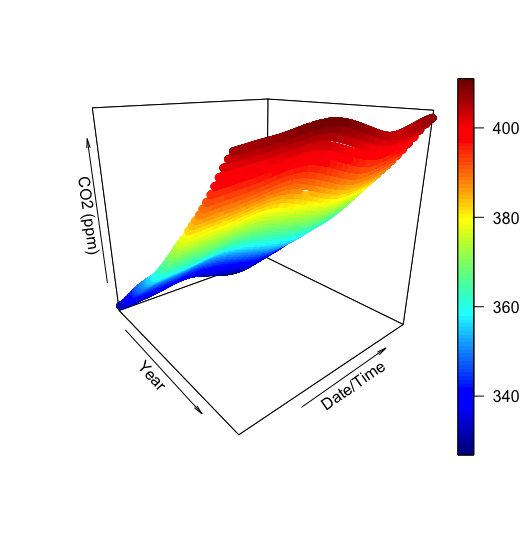}
	\caption[Observed and model-predicted surfaces for hourly Mauna Loa CO2 data.]{Left: Observed surface. Right: Model-predicted surface.}
	\label{fig:tp_mlo_surface}
\end{figure}

Figure \ref{fig:tp_mlo_slices} shows slices of the model-predicted year-season CO2 surfaces along each dimension. The tensor product of Gaussian processes fits the model well, with residuals having no apparent seasonal or annual patterns. Earlier years seem to exhibit some heteroskedastic and positive-skewed noise, which could be explicitly accounted for using an alternative noise model.

\begin{figure}[!htb]
	\centering
	\includegraphics[width=.65\linewidth]{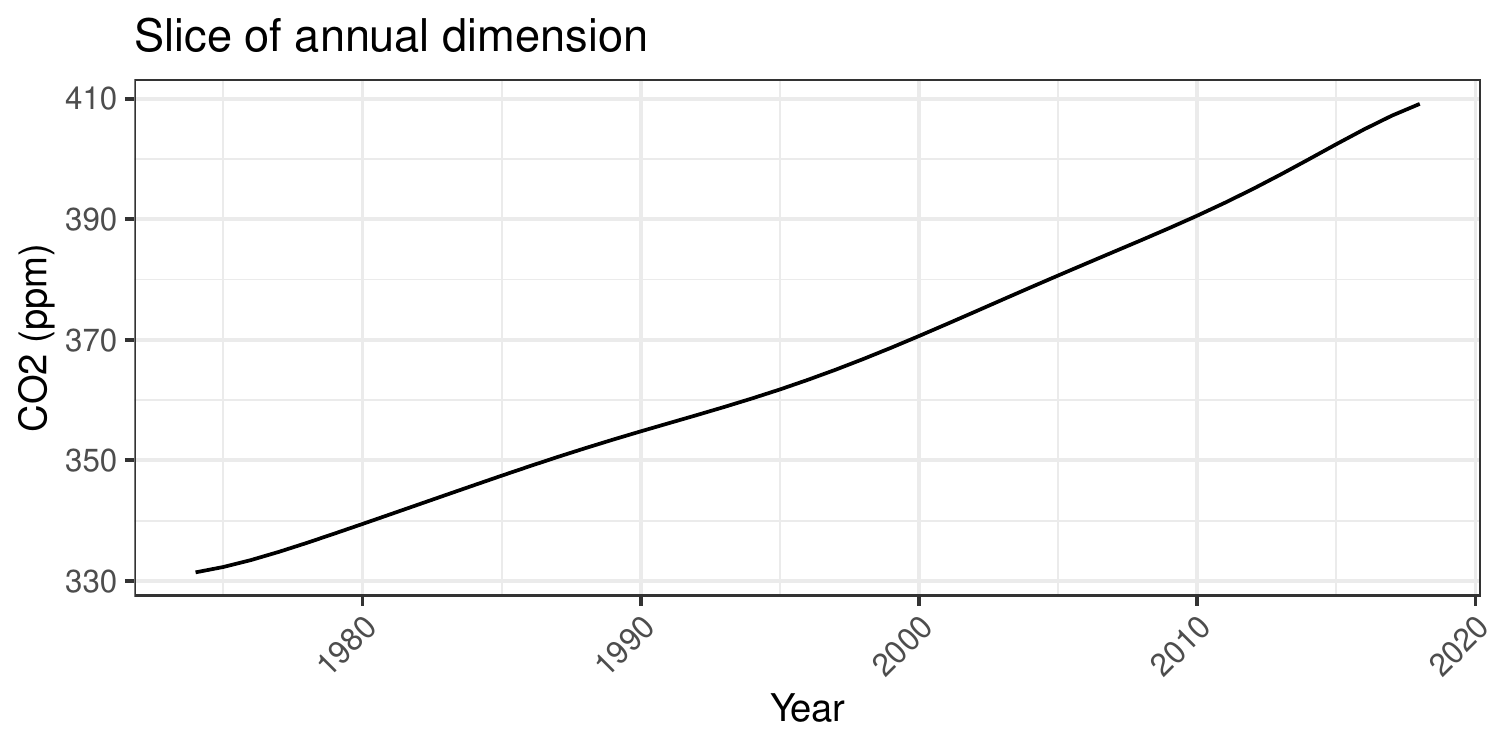}
	\includegraphics[width=.65\linewidth]{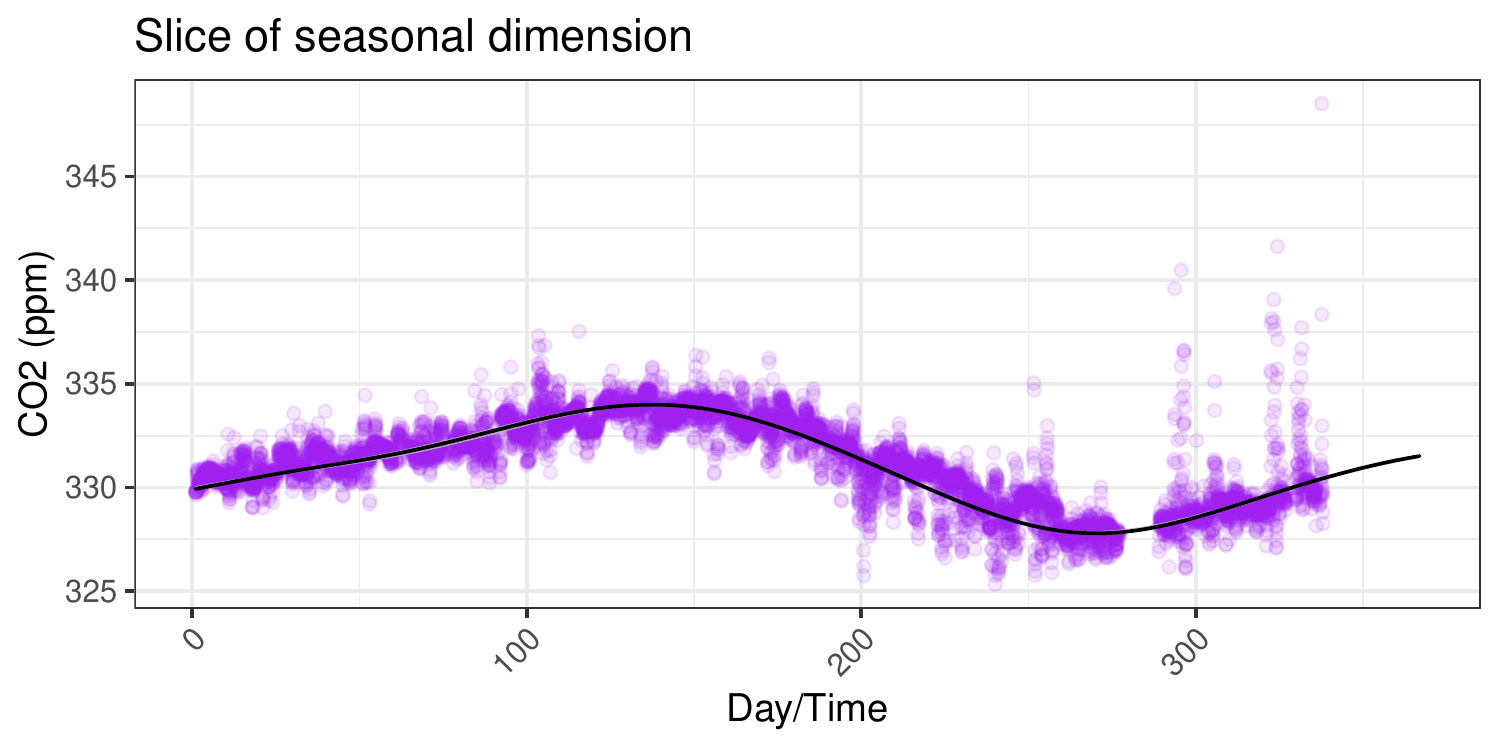}
	\caption[Slices of model-predicted surface for hourly Mauna Loa CO2 data.]{Top: Slice of CO2 surface viewed along the year dimension (hour 14 of day 185 shown). Evident is the annual increase in CO2. Bottom: Slice of CO2 surface viewed along the season dimension (year 1975 shown). Evident is the seasonal pattern of CO2 levels, with a peak in early summer and a trough in the fall.}
	\label{fig:tp_mlo_slices}
\end{figure}

\subsection{Particulate matter}

PM$_{2.5}$ describes fine particulate matter, inhalable and with diameters that are generally under 2.5 micrometers. The EPA monitors and reviews national PM$_{2.5}$ concentrations, and sets national air quality standards under the Clean Air Act. Fine particles are the particulate matter having greatest threat to human health. These particles are also the main cause of reduced visibility in many parts of the United States.

We use the sample-level PM$_{2.5}$ data for the contiguous states since 1999 (4,977,391 observations).  We model the latitude-longitude-time surface as the sum of a GP for the geographic effect and a tensor product of GPs for latitude, longitude, and time. This 
three dimensional surface fit again was fast taking about an hour. 

Figure \ref{fig:res_pm25} shows the model predicted mean PM$_{2.5}$ value for the contiguous United States at three time points. There is an evident decrease in PM$_{2.5}$ across years along with regional hot spots, with the southeast and parts of California being particularly problematic. The total mean PM$_{2.5}$ decreased over time, which reflects the increased air quality standards over this time period. 

\begin{figure}[!htbp]
	\centering
	\includegraphics[width=.75\linewidth]{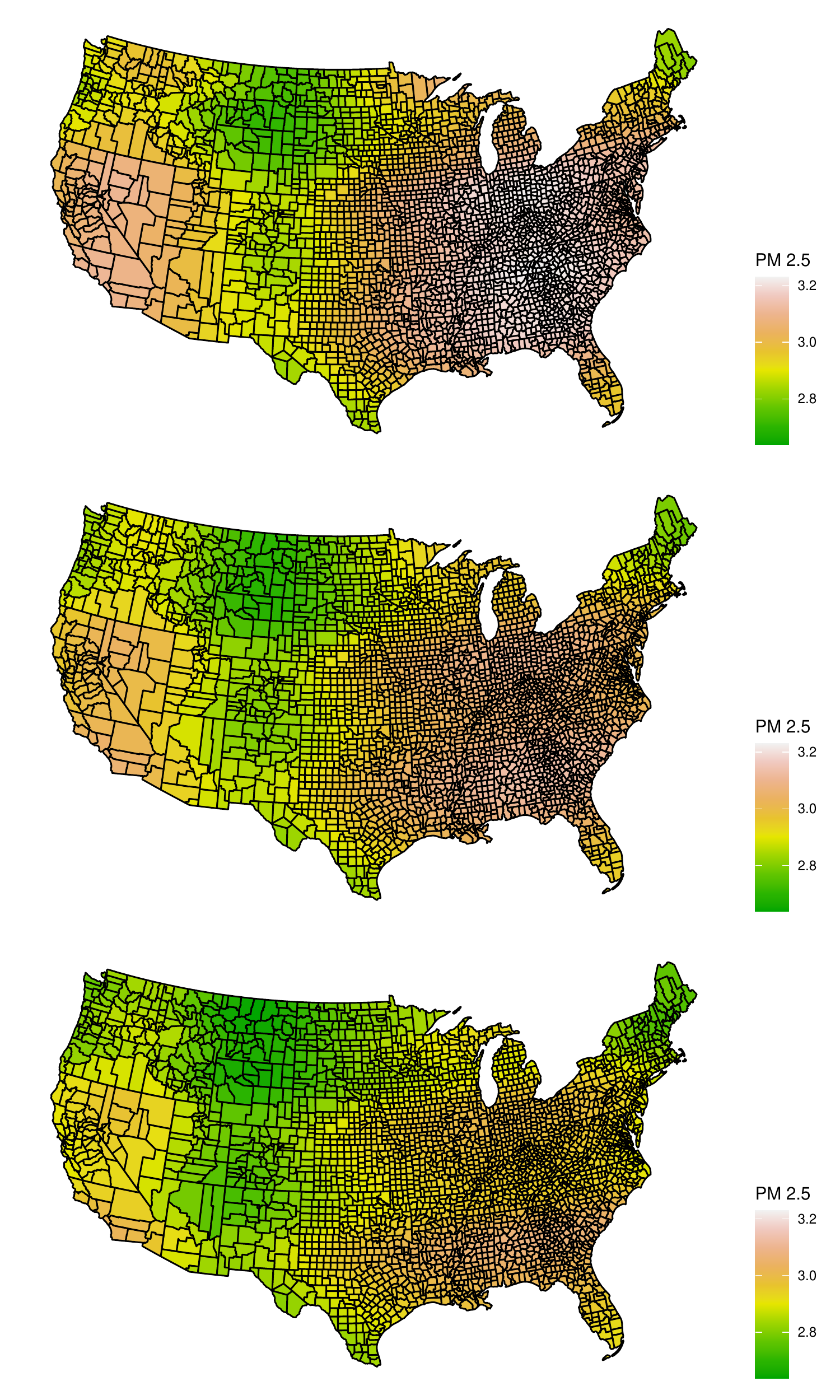}
	\caption{Model predicted mean PM$_{2.5}$ value for the contiguous United States. From top to bottom: February 19, 1999; July 16, 2009; and December 11, 2019.}
	\label{fig:res_pm25}
\end{figure}


\section{Discussion}\label{sec:discussion}

We have described an algorithm that allows near-exact Gaussian process sampling to be performed in a computationally tractable way up to $n$ on the order of $10^5$ for univariate inputs using $\mathcal{H}$ matrices. As observed by \cite{litvinenko2006application, ambikasaran2014fastdirect}, the HODLR matrix format can quickly grow in computation cost in higher dimensional input spaces. We have addressed this issue by proposing a tensor product approach that leverages the speed of HODLR operations in univariate input domains and scales to higher dimensions at $\mathcal{O}(d n\hspace{1mm}\mbox{log}^2 n).$ It would also be possible to use the univariate GPs as building blocks in other ways to construct multidimensional surfaces. The downside to this approach is that it is unsuitable for approximating an elaborate GP with a carefully chosen covariance for $d>1$.

While the use of HODLR matrices is convenient due to the well supported code base, the ideas in the paper are extensible to other classes of $\mathcal{H}$-matrices as well. Lemma \ref{thm:alg1} and Theorem \ref{thm:approx_fidel} are valid for any matrix approximation having an error bound. Other $\mathcal{H}$-matrix approximations could ameliorate some of the computational costs associated with HODLR matrices in higher-dimensional input settings, without requiring the tensor product GP approach, and 
we have begun to implement these ideas using $\mathcal{H}^2-$compression matrices using the H2LIB library \cite{hackbusch2002data}.  This library efficiently compresses matrices up to three dimensions, and we have had success modeling 3-dimensional GP problems up to $n=20,000.$  For the H2LIB library, the inversion uses a single processor, but one sample still takes less than a second for many problems where $n=20,000.$  For higher dimensional inputs, the tensor product approach still is applicable, and we imagine one could approximate a surface using different $\mathcal{H}/\mathcal{H^2}$ matrix compression algorithms with the same theoretical guarantees on approximation as well as computational efficiency.

Although we considered the intensive Bayesian regression problem of the case of large $n$ when $f()$ is modeled as a Gaussian process, the framework introduced here is general and can be applied to any problem in which some large structured matrix inversion is required within a Bayesian sampler. Still within the GP realm, there are many cases when univariate GPs are used as model components rather than for standalone regression problems. Having the ability to plug-in a fast near-exact sampler could open the use of these component GPs up to larger problems.

Outside of the GP regression, similar set of computational bottlenecks arise in the case of large $p$ when $f()$ is given a parametric sparsity inducing prior (e.g., the horseshoe prior). Specifically, the matrix inversion required in \cite{makalic2015simple} is also of the form $(\Sigma + \tau I)^{-1},$ where $\Sigma$ is some $p \times p$ positive definite matrix, $\tau$ is a constant, and $I$ is a $p \times p$ identity matrix. Exploring the use of $\mathcal{H}$-matrix compression and arithmetic in the sparse Bayesian regression setting provides a promising avenue of future research.

\section{Acknowledgements}

This work was partially supported by the Department of Energy Computational Science Graduate Fellowship (grant DE-FG02-97ER25308). This research was supported [in part] by the Intramural Research Program of the NIH, National Institute of Environmental Health Sciences. The funders had no role in study design, data collection and analysis, decision to publish, or preparation of the manuscript. The authors would like to thank David Dunson and Amy Herring for helpful comments.  

\printbibliography

\newpage

\section{Appendix}\label{Appendix}

\subsection{Additional sampling considerations}\label{sec:additional_considerations}

Here we provide details on how the function posterior at new inputs can be sampled, an efficient way to handle non-unique input points, and handling heteroskedastic noise.

\subsubsection{Function posterior at new inputs}



In the Gibbs sampling framework, i.e. when we can condition on $\bm{f}$, the predictive distribution of the realization of the function $f^*$ at a new point $\bm{x}^*$ is given by
\begin{equation}
\begin{split}
f^*|\bm{x}^*,\bm{f},X,\Theta,\tau &\sim \text{N}(\mu_{*}, \sigma^2_{*}), \\
\mu_{*} &= \bm{k}_{*n} K^{-1}\bm{f}, \\
\sigma^2_{*} &= k_{**} - \bm{k}_{*n} K^{-1}\bm{k}_{n*},
\label{eq:gp_posterior_newx}
\end{split}
\end{equation}
where $k_{**} = k(\bm{x}^*,\bm{x}^*)$, and $\bm{k}_{*n} = k(\bm{x}^*,X)$ and $\bm{k}_{n*}=k(X,\bm{x}^*)=(\bm{k}_{*n})^T$ are realizations of the covariance kernel between new input $\bm{x}^*$ and each of the observed inputs.

Posterior realizations of $f^*$ can be sampled in $\mathcal{O}(n\log{}n)$ per point assuming the $\mathcal{H}$-matrix approximation for $K$ is pre-assembled and pre-factorized; in this case the most costly calculation of $\sigma_*^2$ from equation (\ref{eq:gp_posterior_newx}) is a linear solve. Such pre-computation is possible when the length-scale is sampled on a fixed grid. In this case we can also pre-solve $K^{-1}\bm{k}_{n*}$ for each possible length-scale value. 

Unfortunately, we cannot pre-solve $K^{-1}\bm{f}$ because $\bm{f}$ is sampled and changes at each iteration of the Gibbs sampler. Therefore, for $N$ such new points the cost is $\mathcal{O}(Nn\log{}n)$. Thus, this method is not preferred for applications in which function realizations are desired for a very large number of new points (i.e., when $N\approx n$). However, it is quite powerful in applications for which $n \gg N.$

\subsubsection{Non-unique input points}

Denote the set of all $n$ (possibly non-unique) inputs as $X=\{\bm{x}_h \in \mathbb{R}^d\}_{h=1}^n.$ Let $X^u=\{\bm{x}_i^u \in \mathbb{R}^d\}_{i=1}^{U}$ denote the set of $U$ \textit{unique} input values. Then $U\leq n$ and for any $i\neq j$ $\bm{x}_i^u \neq \bm{x}_j^u.$ Computational savings can be achieved when $U<n.$ Let $Q_i$ denote the set of original indices for which the input values are equal to $\bm{x}_i^u$, i.e. $Q_i = \{h \text{ s.t. } \bm{x}_h = \bm{x}_i^u \}.$ Then define $\bm{y}^u=\{y_i^u = \frac{1}{|Q_i|}\sum_{h \in Q_i} y_h \in \mathbb{R} \}_{i=1}^U$ to be the set of $U$ average observations at each unique input value. Sampling and inference can proceed using a GP prior on the $U \times U$ covariance matrix associated with observations $\bm{y}^u.$ The precision term $\tau$ added to the diagonal of the original covariance matrix is then replaced with $|Q_i|^2\cdot\tau$ for entry $(i,i)$ of the new covariance matrix for all $i\in 1, \ldots, U.$

\subsubsection{Heteroskedastic noise}

It was assumed that $e_i \sim \text{N}(0,\tau^{-1}).$ However, heteroskedastic noise can easily be accomodated in the above samplers. 
Suppose that $e_i \sim \text{N}(0,\tau_i^{-1}).$ 
Define $D^{-1}=\text{diag}(\tau_1^{-1},\ldots,\tau_n^{-1})$ to be the diagonal matrix of variance parameters so that $D=\text{diag}(\tau_1,\ldots,\tau_n)$ is then the diagonal matrix of precision parameters.
Then the posterior variance from equation (\ref{eq:gp_posterior}) becomes
\begin{align}
    \text{Cov}(\bm{f}|\bm{y}, X, \Theta) &= K - K(K + D^{-1})^{-1}K \tag*{} \\
    &= (D + K^{-1})^{-1} \tag*{(Sherman-Morrison-Woodbury formula)} \\
    &= K(DK + I)^{-1} \tag*{(($D + K^{-1})^{-1} = K K^{-1} (D + K^{-1})^{-1}$)} \\
    &= K(K + D^{-1})^{-1}D^{-1} \tag*{(Right-multiply by $DD^{-1}$)} \\
    &= D^{-1}(K + D^{-1})^{-1}K \tag*{(Transpose symmetric matrix)}.
\end{align}

The posterior mean from equation (\ref{eq:gp_posterior}) becomes
$$\mathbb{E}(\bm{f}|\bm{y}, X, \Theta) = K(K + D^{-1})^{-1}\bm{y} = \text{Cov}(\bm{f}|\bm{y}, X, \Theta)D\bm{y}.$$

The format of the above posterior covariance is chosen so as to enable easier HODLR operations. See Appendix \ref{sec:gp_posterior_hetero} for further algebraic exposition and a discussion of how the KL divergence bound and Algorithm 1 are adjusted for heteroskedastic variance.

\subsubsection{Practical considerations}

The HODLR approximation relies on random compression of the off-diagonal block matrices. This process doesn't guarantee that the result is always a positive semidefinite matrix, although in practice it most often is (and when it is not, simply decreasing $\varepsilon$ will often lead to the conditions being met). Similarly, the product $\tilde{K}\tilde{M}^{-1}$ in Theorem \ref{thm:approx_fidel} is not guaranteed to be positive semidefinite for every perturbation from $K$ and $M.$ In fact, this issue is practically present for large $n$ even when using the exact GP due to finite precision arithmetic.

\subsection{Proofs}\label{Appendix:proofs}

\begin{proof}[Proof of Lemma \ref{thm:alg1}]
After \textit{Step} \ref{alg1_1}, $\text{Var}(Z)=\tau \tilde{K}\tilde{K} + \tilde{K} =\tilde{K}(\tau \tilde{K}+I).$ Note that $\tau \tilde{K}+I$ is equivalent to $\tilde{M}$ because of the tolerances specified in \textit{Step} \ref{alg1_tol}. To see why, note that $||K-\tilde{K}||_{\max} < \epsilon^*/\tau \implies ||\tau K- \tau \tilde{K}||_{\max} < \epsilon^*$, so the HODLR approximation $\tau \tilde{K}$ satisfies the $\epsilon^*$ bound for approximating $\tau K$. Since the off-diagonal block matrices are factorized via partial-pivoted LU decomposition, multiplication by a constant does not impact the solution other than by rescaling. That is, finding an approximation for $K$ with tolerance $\epsilon^*/\tau$ is equivalent to finding an approximation for $\tau K$ with tolerance $\epsilon^*$ and dividing the result by $\tau.$ Next, recall the HODLR decomposition preserves diagonal elements of the original matrix, so a decomposition of matrix $A$ plus the identity matrix is equivalently expressed as $\widetilde{A + I}$ and $\tilde{A} + I.$ Thus $\tau \tilde{K}+I = \widetilde{\tau K + I}$ and $Z \sim \text{N}(0, \tilde{K}\tilde{M})$ at the start of \textit{Step} \ref{alg1_2}. Now at the end of \textit{Step} \ref{alg1_2} $\text{Var}(Z^*)=\tilde{M}^{-1}\tilde{K}\tilde{M}\tilde{M}^{-1} = \tilde{M}^{-1}\tilde{K}.$ Because $\tilde{M}^{-1},$ $\tilde{K},$ and their product are symmetric, the terms commute and $\text{Var}(Z^*)=\tilde{K}\tilde{M}^{-1}$. Thus $Z^*\sim \text{N}(0, \tilde{\Sigma}_{f})$. The mean $\tilde{\mu}_{f}$ is created in \textit{Step} \ref{alg1_3}, leading to the sum in the final step to have the desired distribution.
\end{proof}

\begin{proof}[Proof of Theorem \ref{thm:approx_fidel}]

From equation (\ref{eq:gp_posterior}), let $M = \tau K + I.$ Define $\tilde{K}$ and $\tilde{M}$ to be the $\mathcal{H}$-matrix approximations of $K$ and $M,$ respectively. Denote $\bm{p} \sim N(\tau K \bm{y}, K M^{-1})$ and $\bm{q} \sim N(\tau \tilde{K} \bm{y}, \tilde{K} \tilde{M}^{-1}),$ with density functions $\mathcal{P}$ and $\mathcal{Q},$ respectively. In other words, $\mathcal{P}$ is the true GP posterior and $\mathcal{Q}$ is the $\mathcal{H}-$approximated GP posterior.

\begin{align}\label{eq:big_KL_pts1to3}
\begin{split}
    E_{\mathcal{P}} \Bigg[ \mbox{log}\Bigg( \frac{\mathcal{P}}{\mathcal{Q}} \Bigg) \Bigg] = \frac{1}{2} \bigg( &
    \underbrace{\text{log}\bigg[\frac{\text{det}(\tilde{K}\tilde{M}^{-1})}{\text{det}(KM^{-1})}\bigg]}_\text{(i.)} + \\
    & \underbrace{\text{tr}\big[ (\tilde{K}\tilde{M}^{-1})^{-1} (KM^{-1}) \big] - n}_\text{(ii.)} + \\ 
    & \underbrace{ \big[ \tau \tilde{K}\tilde{M}^{-1}\bm{y} - \tau KM^{-1}\bm{y} \big]' \big(\tilde{K}\tilde{M}^{-1}\big)^{-1} \big[ \tau \tilde{K}\tilde{M}^{-1}\bm{y} - \tau KM^{-1}\bm{y} \big] }_\text{(iii.)} \bigg)
\end{split}
\end{align}

Let $\Omega^K = \tilde{K} - K,$ and $\Omega^M = \tilde{M} - M$ be the matrices of differences between the approximated and true matrices. Assume by construction $\underset{1 \leq i,j \leq n}{\max} |\Omega^K_{ij}|\leq \epsilon$ and $\underset{1 \leq i,j \leq n}{\max} |\Omega^M_{ij}|\leq \epsilon.$ That is, assume the absolute entry-wise difference between each approximated and true matrix is at most some $\epsilon.$

Note that using the HODLR decomposition one can create $\tilde{M}$ using $\tilde{K}$ rather than having to create it separately, if desired. For $\tau>1,$ simply using $\tilde{M}=\tau \tilde{K} + I$ satisfies the $\epsilon$-bound on both matrices. For $\tau<1,$ finding $\tilde{K}$ satisfying the bound for $\epsilon/\tau$ and then $\tilde{M}=\tau \tilde{K} + I$ satisfies the $\epsilon$-bound on both matrices. In the latter case $\tilde{K}$ is calculated to a higher level of fidelity in order to guarantee the fidelity of $\tilde{M}$ meets the requisite bound.

\begin{proofpart} 

For the part (i.) bound we will rely on the Hoffman-Wielandt (HW) inequality \cite{rajendra1997matrix}. This inequality states that for $n \times n$ $\Sigma^0, \Sigma^1$ symmetric with $\lambda_i^0, \lambda_i^1$ the eigenvalues of $\Sigma^0, \Sigma^1$ respectively, there exists a permutation $\pi$ such that $$\sum_{i=1}^n (\lambda_{\pi(i)}^0 - \lambda_i^1)^2 \leq ||\Sigma^0 - \Sigma^1||_F^2.$$

Suppose $||\Sigma^0 - \Sigma^1||_F^2 \leq c^2,$ then each individual $(\lambda_{\pi(i)}^0 - \lambda_i^1)^2$ must be $\leq c^2$ because the sum for all $n$ terms is $\leq c^2.$ In other words, $|\lambda_{\pi(i)}^0 - \lambda_i^1| \ \leq \ |c|$ for each $i \in 1,\ldots,n.$ Suppose $c \geq 0,$ then 
$$\frac{\lambda_{\pi(i)}^0}{\lambda_i^1} \in [1-c, 1+c].$$ 
In the case that the max norm of $(\Sigma^0 - \Sigma^1)$ is bounded by $\epsilon>0,$ i.e. $\underset{1\leq i,j \leq n}{\max}|\Sigma^0_{ij} - \Sigma^1_{ij}| < \epsilon,$ we have 
\begin{align}\label{eq:hw_ineq_1}
\sum_{i=1}^n (\lambda_{\pi(i)}^0 - \lambda_i^1)^2 \leq ||\Sigma^0 - \Sigma^1||_F^2 \leq n^2 \epsilon^2 \quad \implies \quad \frac{\lambda_{\pi(i)}^0}{\lambda_i^1} \in [1-n \epsilon, 1+n \epsilon].
\end{align}

Now consider the log of the product of the above terms. Supposing all $\lambda_i^0,\lambda_i^1>0$ (which will be the case in our application because we are considering covariance matrices), 
\begin{align}\label{eq:hw_ineq}
\begin{split}
\frac{\lambda_{\pi(i)}^0}{\lambda_i^1} \in [1-n \epsilon, 1+n \epsilon] \quad &\implies \quad \prod_{i=1}^n \frac{\lambda_{\pi(i)}^0}{\lambda_i^1} \in [ (1-n \epsilon)^n, (1+n \epsilon)^n ] \\
&\implies \quad \text{log}\bigg[\prod_{i=1}^n \frac{\lambda_{\pi(i)}^0}{\lambda_i^1}\bigg] \in [n \text{log}(1-n \epsilon), n \text{log}(1+n \epsilon)].
\end{split}
\end{align}
Note for the lower bound in equation (\ref{eq:hw_ineq}) to be sensible we need $1>n\epsilon \implies \epsilon < \frac{1}{n}.$ See Sections \ref{sec:more_algebra_evalDifRat} and \ref{sec:more_algebra_maxnormFnorm} for additional algebraic exposition.

Additionally, we will rely on the following determinant and log properties for any matrices $A$ and $B$ of compatible dimensions and $\lambda^A_i$ the eigenvalues of $A$ for $n \times n$ matrix $A$:
\begin{align}
\text{det}(AB) &= \text{det}(A) \cdot \text{det}(B), \label{eq:d1} \\
\text{det}(A^{-1}) &= \text{det}(A)^{-1}, \label{eq:d2} \\
\text{det}(A) &= \prod_{i=1}^n \lambda^A_i, \label{eq:d3} \\
\text{log}(AB) &= \text{log}(A) + \text{log}(B), \label{eq:l1} 
\end{align}

Now armed with the HW inequality and the determinant and log properties, we return to the issue of bounding part (i.):
\begin{align*}
\text{log}\bigg[\frac{\text{det}(\tilde{K}\tilde{M}^{-1})}{\text{det}(KM^{-1})}\bigg] &= \text{log}\bigg[\frac{\text{det}(\tilde{K}) \cdot \text{det}(\tilde{M}^{-1})}{\text{det}(K) \cdot \text{det}(M^{-1})}\bigg] \tag*{(Equation (\ref{eq:d1}))} \\
&= \text{log}\bigg[\frac{\text{det}(\tilde{K}) \cdot \text{det}(M)}{\text{det}(K) \cdot \text{det}(\tilde{M})}\bigg] \tag*{(Equation (\ref{eq:d2}))} \\
&= \underbrace{\text{log}\bigg[\frac{\text{det}(\tilde{K})}{\text{det}(K)}\bigg]}_\text{(d1.)} + \underbrace{\text{log}\bigg[\frac{\text{det}(M)}{\text{det}(\tilde{M})}\bigg]}_\text{(d2.)} \tag*{(Equation (\ref{eq:l1}))}
\end{align*}

Consider part (d1.). Let $\lambda_i^{K}$ and $\lambda_i^{\tilde{K}}$ be the eigenvalues of $K$ and $\tilde{K},$ respectively, and $\pi$ be some permutation of the indices $i=1,\ldots,n,$ then we have
\begin{align*}
\text{log}\bigg[\frac{\text{det}(\tilde{K})}{\text{det}(K)}\bigg] &= \text{log}\bigg[\frac{\prod_{i=1}^n \lambda_i^{\tilde{K}}}{\prod_{i=1}^n \lambda_i^{K}}\bigg] \tag*{(Equation (\ref{eq:d3}))} \\
&= \text{log}\bigg[\frac{\prod_{i=1}^n \lambda_{\pi(i)}^{\tilde{K}}}{\prod_{i=1}^n \lambda_i^{K}}\bigg] \tag*{} \\
&= \text{log}\bigg[\prod_{i=1}^n\frac{\lambda_{\pi(i)}^{\tilde{K}}}{\lambda_i^{K}}\bigg] \tag*{} \\
&\leq n \ \text{log}(1 + n \epsilon) \tag*{(Equation (\ref{eq:hw_ineq}))} \\ 
&\leq n^2 \epsilon \tag*{(For $x>-1,$ $\text{log}(1+x)\leq x$)}.
\end{align*}

The bound on part (d2.) follows analogously, so the overall bound on part (i.) is
\begin{align}\label{eq:bnd_parti}
\text{log}\bigg[\frac{\text{det}(\tilde{K}\tilde{M}^{-1})}{\text{det}(KM^{-1})}\bigg] \leq 2 n^2 \epsilon.
\end{align}

\end{proofpart} 

\begin{proofpart} 

For the part (ii.) bound we have
\begin{align}
\text{tr}\big[  (\tilde{K} & \tilde{M}^{-1})^{-1} (KM^{-1}) \big] - n \tag*{} \\ 
=& \ \text{tr}\big[ \tilde{M}\tilde{K}^{-1} KM^{-1} \big] - n \tag*{} \\
=& \ \text{tr}\big[ \tilde{K}^{-1} KM^{-1} \tilde{M} \big] - n \tag*{(Trace circular)} \\
=& \ \text{tr}\big[ \tilde{K}^{-1} (\tilde{K}-\Omega^K)M^{-1} \tilde{M} \big] - n \tag*{($K=\tilde{K}-\Omega^K$ by definition)} \\
=& \ \text{tr}\big[ \tilde{K}^{-1} (\tilde{K}-\Omega^K)M^{-1} (M+\Omega^M) \big] - n \tag*{($\tilde{M}=M+\Omega^M$ by definition)} \\
=& \ \text{tr}\big[ \tilde{K}^{-1} (\tilde{K}-\Omega^K)M^{-1} (M+\Omega^M) \big] - n \tag*{($\tilde{M}=M+\Omega^M$ by definition)} \\
=& \ \text{tr}\big[ (\tilde{K}^{-1}\tilde{K}-\tilde{K}^{-1}\Omega^K) (M^{-1}M+M^{-1}\Omega^M) \big] - n \tag*{} \\
=& \ \text{tr}\big[ (I^n-\tilde{K}^{-1}\Omega^K) (I^n+M^{-1}\Omega^M) \big] - n \tag*{($I^n$ is $n\times n$ identity matrix)} \\
=& \ \text{tr}\big[ I^n + M^{-1}\Omega^M -\tilde{K}^{-1}\Omega^K - \tilde{K}^{-1}\Omega^K M^{-1}\Omega^M \big] - n \tag*{} \\
=& \ \text{tr}\big[ I^n \big] + \text{tr}\big[ M^{-1}\Omega^M \big] - \text{tr}\big[ \tilde{K}^{-1}\Omega^K \big] - \text{tr}\big[ \tilde{K}^{-1}\Omega^K M^{-1}\Omega^M \big] - n \tag*{(Trace additive)} \\
=& \ \text{tr}\big[ M^{-1}\Omega^M \big] - \text{tr}\big[ \tilde{K}^{-1}\Omega^K \big] - \text{tr}\big[ \tilde{K}^{-1}\Omega^K M^{-1}\Omega^M \big] \tag*{($\text{tr}\big[ I^n \big]=n$)} 
\end{align}

Consider the bound on the first term in the above expression:
\begin{align}
\text{tr}\big[ M^{-1}\Omega^M \big] \ &= \ \sum_{i=1}^n \sum_{j=1}^n M^{-1}_{ij}\Omega^M_{ji} \tag*{(Definition of trace)} \\
&\leq \ ||M^{-1}||_{\max} \sum_{i=1}^n \sum_{j=1}^n \Omega^M_{ji} \tag*{\big( $||M^{-1}||_{\max} = \underset{1 \leq i,j \leq n}{\max} |M^{-1}_{ij}|$ \big)} \\ 
&\leq \ ||M^{-1}||_{\max} \sum_{i=1}^n \sum_{j=1}^n \epsilon \tag*{\big( $\underset{1 \leq i,j \leq n}{\max} |\Omega^M_{ij}|\leq \epsilon$ \big)} \\
&\leq \ ||M^{-1}||_{\max} \ n^2 \ \epsilon \tag*{}.
\end{align}


Because the $\epsilon$ bound on $\Omega^K$ applies to $-\Omega^K$ as well, the upper bound on the second term may be found analogously as that on the first:
\begin{align}
-\text{tr}\big[ \tilde{K}^{-1}\Omega^K \big] &= \text{tr}\big[ \tilde{K}^{-1}(-\Omega^K) \big] \tag*{(Trace linear)} \\ 
&= \ \sum_{i=1}^n \sum_{j=1}^n \tilde{K}^{-1}_{ij}(-\Omega^K_{ji}) \tag*{(Definition of trace)} \\
&\leq \ ||\tilde{K}^{-1}||_{\max} \sum_{i=1}^n \sum_{j=1}^n (-\Omega^K_{ji}) \tag*{\big( $||\tilde{K}^{-1}||_{\max} = \underset{1 \leq i,j \leq n}{\max} |\tilde{K}^{-1}_{ij}|$ \big)} \\ 
&\leq \ ||\tilde{K}^{-1}||_{\max} \sum_{i=1}^n \sum_{j=1}^n \epsilon \tag*{\big( $\underset{1 \leq i,j \leq n}{\max} |-\Omega^K_{ij}| = \underset{1 \leq i,j \leq n}{\max} |\Omega^K_{ij}| \leq \epsilon$ \big)} \\
&\leq \ ||\tilde{K}^{-1}||_{\max} \ n^2 \ \epsilon \tag*{}.
\end{align}

It would be preferable to have the bound only depend on $K$ and the specific approximation $\tilde{K}$. Weyl's inequality states that for a matrix $A$ perturbed by some matrix $\Omega,$ the difference  between the eigenvalues of the original matrix $A$ and its perturbation $\tilde{A}=A+\Omega$ is bounded by the spectral norm of $\Omega.$ Specifically, 
\begin{align}\label{eq:weyls}
|\lambda^{A}_i - \lambda^{\tilde{A}}_i| \leq ||\Omega||_2 \quad \forall i=1,\ldots n.
\end{align}

Let $\sigma_{\min}(\cdot)$ and $\sigma_{\max}(\cdot)$ define the minimum and maximum eigenvalues, respectively, of a matrix. Then by equation (\ref{eq:weyls}) we can say 
\begin{align}\label{eq:sigminBnd}
| \sigma_{\min}(K) - \sigma_{\min}(\tilde{K}) | \leq ||K - \tilde{K}||_2 \leq n^2 ||K - \tilde{K}||_{\max} = n^2 \epsilon.
\end{align}

We consider the bound on $||\tilde{K}^{-1}||_{\max},$ namely
\begin{align}
||\tilde{K}^{-1}||_{\max} \ &\leq \ \sqrt{n}||\tilde{K}^{-1}||_2 \tag*{} \\
&= \ \sqrt{n \sigma_{\max}(\tilde{K}^{-1})} \tag*{\text{(Definition of spectral norm)}} \\
&= \ \sqrt{n/\sigma_{\min}(\tilde{K})} \tag*{($\sigma_{\max}(A^{-1}) = 1/\sigma_{\min}(A)$)} \\
&= \ \sqrt{n/\sigma_{\min}(\tilde{K})} \tag*{} \\
&= \ \sqrt{n/[\sigma_{\min}(K) + \delta_{K}]} \tag*{(Letting $\delta_K=\sigma_{\min}(K)-\sigma_{\min}(\tilde{K})$)}.
\end{align}

The smaller $\sigma_{\min}(K) + \delta_{K}$ is, the looser the bound on $||\tilde{K}^{-1}||_{\max}$ becomes. From equation (\ref{eq:sigminBnd}) we know that $|\delta_{K}| \leq n^2 \epsilon$. In the worst case, $\delta_{K}$ will be as negative as possible so that the term in the denominator becomes closer to 0; this ``worst'' value is $-n^2 \epsilon$. This worst case relies on the assumption that $n^2 \epsilon < \sigma_{\min}(K),$ so that $\sigma_{\min}(\tilde{K})$ remains nonzero. Any $\epsilon < \frac{\sigma_{\min}(K)}{n^2}$ satisfies this condition. Then the bound on the second term is
\begin{align}
-\text{tr}\big[ \tilde{K}^{-1}\Omega^K \big] 
&\leq \ ||\tilde{K}^{-1}||_{\max} \ n^2 \ \epsilon \tag*{} \\
&\leq \ n^{5/2} \ [\sigma_{\min}(K) - n^2 \epsilon]^{-1/2} \ \epsilon.
\end{align}


Before we move on to the third term, we state two trace inequalities. First, let $A,B$ be two matrices of compatible dimensions with $A'B$ being a square matrix, then by the Cauchy-Schwarz inequality: 
\begin{align}\label{eq:tr_ineq1}
\text{tr}\big[ A' B \big] \leq \sqrt{ \text{tr}\big[ A' A \big] \text{tr}\big[ B' B \big] }.
\end{align}

Next, let $A$ be a positive semi-definite matrix with eigenvalues $\lambda_i$, then: 
\begin{align}\label{eq:tr_ineq2}
\text{tr}\big[ A^2 \big] = \sum_i \lambda_i^2 \leq \big(\sum_i \lambda_i\big)^2 = \text{tr}\big[ A \big]^2.
\end{align}

Now consider the third and final term. To bound this term, we will rely on two trace inequalities, shown in equations (\ref{eq:tr_ineq1}) and (\ref{eq:tr_ineq2}). Treat $-\tilde{K}^{-1}\Omega^K$ as $A'$ and $M^{-1}\Omega^M$ as $B$ in equation (\ref{eq:tr_ineq1}), then
\begin{align}
-\text{tr}\big[ \tilde{K}^{-1}\Omega^K M^{-1}\Omega^M \big] \ &= \ \text{tr}\big[ -\tilde{K}^{-1}\Omega^K M^{-1}\Omega^M \big] \tag*{(Trace linear)} \\
& \leq \ \sqrtlimspace{\underbrace{\text{tr}\big[ (-\tilde{K}^{-1}\Omega^K)(-\tilde{K}^{-1}\Omega^K)' \big]}_\text{(t1.)} \underbrace{\text{tr}\big[ (M^{-1}\Omega^M)'(M^{-1}\Omega^M) \big]}_\text{(t2.)} } \tag*{(Equation (\ref{eq:tr_ineq1}))}.
\end{align}

Consider bounding only part (t1.) above:
\begin{align}
\text{tr}\big[ (-\tilde{K}^{-1}\Omega^K)(-\tilde{K}^{-1}\Omega^K)' \big] \ &= \ \text{tr}\big[ \tilde{K}^{-1}\Omega^K \Omega^K \tilde{K}^{-1} \big] \tag*{($\tilde{K}^{-1},\Omega^K$ symmetric)} \\
&= \ \text{tr}\big[ \tilde{K}^{-1} \tilde{K}^{-1}\Omega^K \Omega^K \big] \tag*{(Trace circular)} \\
&\leq \ \sqrt{ \text{tr}\big[ (\tilde{K}^{-1})^4 \big] \text{tr}\big[ (\Omega^K)^4 \big] } \tag*{(Equation (\ref{eq:tr_ineq1}))} \\
&\leq \ \sqrt{ \text{tr}\big[ \tilde{K}^{-1} \big]^4 \text{tr}\big[ (\Omega^K)^4 \big] } \tag*{(Equation (\ref{eq:tr_ineq2}) and $\tilde{K}^{-1}$ is PSD)} \\
&\leq \ \sqrt{ \text{tr}\big[ \tilde{K}^{-1} \big]^4 (n \epsilon)^4 } \tag*{($\text{tr}\big[ (\Omega^K)^r \big] \leq (n \epsilon)^r$, see Section \ref{sec:more_algebra_maxnormTr})} \\
&\leq \ \text{tr}\big[ \tilde{K}^{-1} \big]^2 (n \epsilon)^2 \tag*{}.
\end{align}

An analogous sequence of arguments bounds part (t2.):
\begin{align}
\text{tr}\big[ (M^{-1}\Omega^M)'(M^{-1}\Omega^M) \big] \ \leq \ \text{tr}\big[ M^{-1} \big]^2 (n \epsilon)^2 \tag*{}.
\end{align}

Thus, the bounds on (t1.) and (t2.) can be used to bound the whole expression and we have:
\begin{align*}
-\text{tr}\big[ \tilde{K}^{-1}\Omega^K M^{-1}\Omega^M \big] \ 
&\leq \ \sqrt{\text{tr}\big[ \tilde{K}^{-1} \big]^2 (n \epsilon)^2 \text{tr}\big[ M^{-1} \big]^2 (n \epsilon)^2} \tag*{} \\
&= \ |\text{tr}(\tilde{K}^{-1})| \ | \text{tr}(M^{-1}) | \ n^2 \epsilon^2 \tag*{}.
\end{align*}

As with the max-norm of $\tilde{K},$ we want the term $|\text{tr}(\tilde{K}^{-1})|$ to only involve $K,$ $n,$ and $\epsilon.$ Assuming as before that $\epsilon < \frac{\sigma_{\min}(K)}{n^2},$ and using the fact that the trace of a matrix is the sum of its eigenvalues, we have
\begin{align*}
|\text{tr}(\tilde{K}^{-1})| \ &\leq \ n||\tilde{K}^{-1}||_2^2 \\
&= \ n \ \sqrt{\sigma_{\max}(\tilde{K}^{-1})} \\
&= \ n/\sigma_{\min}(\tilde{K}) \\
&\leq \ n/(\sigma_{\min}(K) - n^2 \epsilon).
\end{align*}


Therefore, the bound on all of part (ii.) is
\begin{equation}\label{eq:bnd_partii}
\text{tr}\big[  (\tilde{K} \tilde{M}^{-1})^{-1} (KM^{-1}) \big] - n \leq \ ||M^{-1}||_{\max} \ n^2 \epsilon \ + \ \frac{n^{5/2}}{\sqrt{\sigma_{\min}(K) - n^2 \epsilon}} \ \epsilon \ + \ \frac{n^3 \ |\text{tr}(M^{-1})|}{\sigma_{\min}(K) - n^2 \epsilon} \ \epsilon^2.
\end{equation}

\end{proofpart} 

\begin{proofpart} 

To bound the part (iii.) term, we will rely on the following norm inequality. Let $A$ be a symmetric $n \times n$ matrix and $||A||_2$ denote the spectral norm, then it follows by Rayleigh's inequality and the definitions of the spectral and Frobenius norms that
\begin{align}\label{eq:norm_ineq1}
\bm{x}'A\bm{x} \ &\leq \ \max_{1\leq j \leq n} \lambda_j \ ||\bm{x}||^2 \ = \ ||A||_2 \ ||\bm{x}||^2 \ \leq \ ||A||_F \ ||\bm{x}||^2
\end{align}
where $\lambda_j$ are eigenvalues of matrix $A$ and $\bm{x}$ is a length-$n$ vector.

We will also rely on an inequality for the difference of matrix inverses. Let $A, B$ be invertible $n \times n$ matrices and let $|| \cdot ||$ denote some submultiplicative matrix norm. Suppose $||A^{-1}(B-A)||<1$ and $||A-B||<c$ for some $c>0.$ Then
\begin{align}\label{eq:bound_invMatDiff}
\begin{split}
||A^{-1} - B^{-1}|| &\leq \frac{||A^{-1}||^2  \  ||A-B||}{1-||A^{-1}|| \ ||A-B||} \\
&\leq \frac{c \ ||A^{-1}||^2 }{1-c \ ||A^{-1}||}.
\end{split}
\end{align}
See Section \ref{sec:more_algebra_diffInvDiff} for details of this inequality, which is based on \cite{horn2012condition}.


\begin{align}
\big[ & \tau \tilde{K}\tilde{M}^{-1}\bm{y} - \tau KM^{-1}\bm{y} \big]' \big(\tilde{K}\tilde{M}^{-1}\big)^{-1} \big[ \tau \tilde{K}\tilde{M}^{-1}\bm{y} - \tau KM^{-1}\bm{y} \big] \tag*{} \\
&= \tau^2 \big[ (\tilde{K}\tilde{M}^{-1}-KM^{-1})\bm{y} \big]' \big(\tilde{K}\tilde{M}^{-1}\big)^{-1} \big[ (\tilde{K}\tilde{M}^{-1}-KM^{-1})\bm{y} \big] \tag*{} \\ 
&= \tau^2 \bm{y}' (\tilde{K}\tilde{M}^{-1}-KM^{-1}) \big(\tilde{K}\tilde{M}^{-1}\big)^{-1} (\tilde{K}\tilde{M}^{-1}-KM^{-1}) \bm{y} \tag*{($\tilde{K}\tilde{M}^{-1}$ and $KM^{-1}$ symmetric)} \\
&= \tau^2 ||(\tilde{K}\tilde{M}^{-1}-KM^{-1}) \big(\tilde{K}\tilde{M}^{-1}\big)^{-1} (\tilde{K}\tilde{M}^{-1}-KM^{-1})||_2 ||\bm{y}||^2 \tag*{(Equation (\ref{eq:norm_ineq1})} \\
&\leq \underbrace{ \tau^2 ||\big(\tilde{K}\tilde{M}^{-1}\big)^{-1}||_2 \ ||\bm{y}||^2}_\text{(n1.)} \  \underbrace{||\tilde{K}\tilde{M}^{-1}-KM^{-1}||_2^2}_\text{(n2.)} \tag*{(Submultiplicativity of $2$-norm)}
\end{align}

We want to bound the term in (n1.) involving $\tilde{K}$ and $\tilde{M}$ by some function of $K,$ $M$, $n$, and $\epsilon$. We have, assuming as before that $\epsilon < \frac{\sigma_{\min}(K)}{n^2}$,
\begin{align*}
||\big(\tilde{K}\tilde{M}^{-1}\big)^{-1}||_2 \ &= \ ||\tilde{M}\tilde{K}^{-1}||_2 \\
&\leq \ ||\tilde{M}||_2 \ ||\tilde{K}^{-1}||_2 \\
&= \sqrt{ \sigma_{\max}(\tilde{M}) \sigma_{\max}(\tilde{K}^{-1}) } \\
&= \sqrt{ \frac{\sigma_{\max}(\tilde{M})}{\sigma_{\min}(\tilde{K})} } \\
&\leq \sqrt{ \frac{\sigma_{\max}(M) + n^2 \epsilon}{\sigma_{\min}(K) - n^2 \epsilon} }.
\end{align*}

The term (n1.) acts as a constant, and (n2.) will go toward 0 with $\epsilon$. Here, consider the square root of (n2.). Assume $||M^{-1}(\tilde{M}-M)||_2<1$, a required condition for the inequality shown in equation (\ref{eq:bound_invMatDiff}). (Note that any $\epsilon<\frac{1}{n ||M^{-1}||_2}$ will satisfy this condition -- see Section \ref{sec:more_algebra_diffInvDiff} for details.)

\begin{align*}
||\tilde{K}\tilde{M}^{-1}-KM^{-1}||_2 &= ||(K+\Omega^K)\tilde{M}^{-1}-KM^{-1}||_2 \tag*{($\tilde{K}=K+\Omega^K$ by definition)} \\
&= ||K\tilde{M}^{-1}+\Omega^K\tilde{M}^{-1}-KM^{-1}||_2 \tag*{} \\
&= ||K(\tilde{M}^{-1}-M^{-1})+\Omega^K\tilde{M}^{-1}||_2 \tag*{} \\
&\leq ||K(\tilde{M}^{-1}-M^{-1})||_2+||\Omega^K\tilde{M}^{-1}||_2 \tag*{(Triangle inequality)} \\
&\leq ||K||_2 ||\tilde{M}^{-1}-M^{-1}||_2+||\Omega^K||_2 ||\tilde{M}^{-1}||_2 \tag*{(Submultiplicativity of $2-$norm)} \\
&\leq ||K||_2 ||\tilde{M}^{-1}-M^{-1}||_2+ n \epsilon ||\tilde{M}^{-1}||_2 \tag*{($||\Omega^K||_2 \leq \sqrt{\underset{i}{\sum}\underset{j}{\sum}|\max{\Omega^K_{ij}}|^2}$)} \\
&\leq ||K||_2 ||M^{-1}-\tilde{M}^{-1}||_2+ n \epsilon ||\tilde{M}^{-1}||_2 \tag*{($||A-B|| = ||B-A||$)} \\
&\leq \frac{||K||_2 \ ||M^{-1}||_2^2 \ ||M-\tilde{M}||_2}{1-||M^{-1}||_2 \ ||M-\tilde{M}||_2}+ n \epsilon ||\tilde{M}^{-1}||_2 \tag*{(Equation (\ref{eq:bound_invMatDiff}))} \\
&= \frac{||K||_2 \ ||M^{-1}||_2^2 \ ||-\Omega^M||_2}{1-||M^{-1}||_2 \ ||-\Omega^M||_2}+ n \epsilon ||\tilde{M}^{-1}||_2 \tag*{($-\Omega^M=M-\tilde{M}$)} \\
&= \frac{||K||_2 \ ||M^{-1}||_2^2 \ ||\Omega^M||_2}{1-||M^{-1}||_2 \ ||\Omega^M||_2}+ n \epsilon ||\tilde{M}^{-1}||_2 \tag*{($||A||=||-A||$)} \\
&\leq \frac{n \epsilon \ ||K||_2 \ ||M^{-1}||_2^2}{1 - n \epsilon  ||M^{-1}||_2}+ n \epsilon ||\tilde{M}^{-1}||_2. \tag*{($||\Omega^K||_2 \leq \sqrt{\underset{i}{\sum}\underset{j}{\sum}|\max{\Omega^K_{ij}}|^2}$)}
\end{align*}

As with $||\tilde{K}^{-1}||_2,$ we can bound $||\tilde{M}^{-1}||_2$ in terms of $M$, $n$, and $\epsilon$.
Assuming that $\epsilon < \frac{\sigma_{\min}(M)}{n^2}$ (which, since $M$ is almost always better conditioned than $K$, will likely be true already because of the earlier statement that we chose $\epsilon < \frac{\sigma_{\min}(K)}{n^2}$),
\begin{align*}
||\tilde{M}^{-1}||_2 \ &= \sqrt{ \sigma_{\max}(\tilde{M}^{-1})} \\
&= 1/\sqrt{\sigma_{\min}(\tilde{M})} \\
&\leq 1/\sqrt{\sigma_{\min}(M) - n^2 \epsilon}.
\end{align*}

Note that since $n,$ $||K||_2,$ and $||M^{-1}||_2$ are fixed (although is is possible they are each quite large), we have
\begin{align*}
\lim_{\epsilon \rightarrow 0} \ \frac{n \epsilon \ ||K||_2 \ ||M^{-1}||_2^2}{1 - n \epsilon  ||M^{-1}||_2} \ = \ 0.
\end{align*}

Since we chose $\epsilon < \frac{\sigma_{\min}(M)}{n^2},$ we have $\epsilon < n \sigma_{\min}(M) < n \sqrt{\sigma_{\min}(M)}$ (assuming $\sigma_{\min}(M)<1$, if not we could simply choose $\epsilon$ satisfying $\epsilon < n \sqrt{\sigma_{\min}(M)}$). Then $1-n\epsilon ||M^{-1}||_2 > 1,$ so $\frac{n \epsilon \ ||K||_2 \ ||M^{-1}||_2^2}{1 - n \epsilon  ||M^{-1}||_2} < n \epsilon \ ||K||_2 \ ||M^{-1}||_2^2,$ and all of (n2.) is $\mathcal{O}(n^2 \epsilon^2)$.


Therefore, the bound on all of part (iii.) is
\begin{align}\label{eq:bnd_partiii}
\begin{split}
\big[ \tau \tilde{K}\tilde{M}^{-1}\bm{y} -& \tau KM^{-1}\bm{y} \big]' \big(\tilde{K}\tilde{M}^{-1}\big)^{-1} \big[ \tau \tilde{K}\tilde{M}^{-1}\bm{y} - \tau KM^{-1}\bm{y} \big] \\
&\leq \tau^2 \sqrt{ \frac{\sigma_{\max}(M) + n^2 \epsilon}{\sigma_{\min}(K) - n^2 \epsilon} } \ ||\bm{y}||^2 \bigg( \frac{n \epsilon \ ||K||_2 \ ||M^{-1}||_2^2}{1 - n \epsilon  ||M^{-1}||_2}+ \frac{n \epsilon}{\sqrt{\sigma_{\min}(M) - n^2 \epsilon}} \ \bigg)^2.
\end{split}
\end{align}

\end{proofpart} 

\end{proof}

\subsection{Additional algebraic work}\label{sec:more_algebra}

\subsubsection{Relationship between bound on absolute difference in eigenvalues and their ratio}\label{sec:more_algebra_evalDifRat}

Say you have $|\lambda^0 - \lambda^1| \ \leq c.$

Suppose $\lambda^0 \geq \lambda^1,$ then 
$$\lambda^0 - \lambda^1 \ \leq c \ \implies \ \frac{\lambda^0}{\lambda^1} \leq 1+c.$$

Suppose $\lambda^0 \leq \lambda^1,$ i.e. $\lambda^0 - \lambda^1$ is negative, then
$$-(\lambda^0 - \lambda^1) \ \leq c \ \implies \ \lambda^0 - \lambda^1 \ \geq -c \ \implies \ \frac{\lambda^0}{\lambda^1} \geq 1-c.$$

\subsubsection{Relationship between bound on max norm and that on $F-$norm}\label{sec:more_algebra_maxnormFnorm}

Let $A$ be some $m\times n$ matrix. By definition, the max norm $||A||_{\max}$ is 
$$||A||_{\max} = \underset{i,j}{\max}\{|a_{ij}|^2\}$$ 
and the Frobenius norm $||A||_F$ is 
$$||A||_F = \sqrt{\underset{i}{\sum}\underset{j}{\sum} |a_{ij}|^2}.$$ 
Thus, $||A||_{\max} \leq ||A||_F \leq \sqrt{mn} ||A||_{\max}$ and
$||A||_{\max}^2 \leq ||A||_F^2 \leq mn ||A||_{\max}^2.$

Suppose that $m=n$ and the max norm $||A||_{\max}$ of $A$ is bounded by $\epsilon>0,$ i.e. 
$||A||_{\max} < \epsilon.$ Then 
$$||A||_F^2 \leq n^2 \epsilon^2.$$

\subsubsection{Relationship between bound on max norm and that on trace}\label{sec:more_algebra_maxnormTr}

Suppose $\underset{1 \leq i,j \leq n}{\max} |\Omega_{ij}|\leq \epsilon,$ then $\text{tr}\big[ \Omega^r \big] \leq (n \epsilon)^r.$ To see this illustrated for $r=4$ (the case of interest for the inequality bounds above), consider a simple example with $n=3.$ The worst case is that every entry of $\Omega$ is equal to its upper bound, i.e.
\[
\Omega =
  \begin{bmatrix}
    \epsilon & \epsilon & \epsilon \\
    \epsilon & \epsilon & \epsilon
  \end{bmatrix}.
\]

Then 
\[
\Omega^2 =
  \begin{bmatrix}
    \epsilon & \epsilon & \epsilon \\
    \epsilon & \epsilon & \epsilon \\
    \epsilon & \epsilon & \epsilon
  \end{bmatrix}
  \begin{bmatrix}
    \epsilon & \epsilon & \epsilon \\
    \epsilon & \epsilon & \epsilon \\
    \epsilon & \epsilon & \epsilon
  \end{bmatrix} 
  = \begin{bmatrix}
    \sum_{i=1}^n \epsilon^2 & \sum_{i=1}^n \epsilon^2 & \sum_{i=1}^n \epsilon^2 \\
    \sum_{i=1}^n \epsilon^2 & \sum_{i=1}^n \epsilon^2 & \sum_{i=1}^n \epsilon^2 \\
    \sum_{i=1}^n \epsilon^2 & \sum_{i=1}^n \epsilon^2 & \sum_{i=1}^n \epsilon^2
  \end{bmatrix}
  = \begin{bmatrix}
    n \epsilon^2 & n \epsilon^2 & n \epsilon^2 \\
    n \epsilon^2 & n \epsilon^2 & n \epsilon^2 \\
    n \epsilon^2 & n \epsilon^2 & n \epsilon^2
  \end{bmatrix},
\]
\[
\Omega^3 =
  \Omega^2
  \begin{bmatrix}
    \epsilon & \epsilon & \epsilon \\
    \epsilon & \epsilon & \epsilon \\
    \epsilon & \epsilon & \epsilon
  \end{bmatrix} 
  = \begin{bmatrix}
    n^2 \epsilon^3 & n^2 \epsilon^3 & n^2 \epsilon^3 \\
    n^2 \epsilon^3 & n^2 \epsilon^3 & n^2 \epsilon^3 \\
    n^2 \epsilon^3 & n^2 \epsilon^3 & n^2 \epsilon^3
  \end{bmatrix},
\]
and 
\[
\Omega^4 =
  \Omega^3
  \begin{bmatrix}
    \epsilon & \epsilon & \epsilon \\
    \epsilon & \epsilon & \epsilon \\
    \epsilon & \epsilon & \epsilon
  \end{bmatrix} 
  = \begin{bmatrix}
    n^3 \epsilon^4 & n^3 \epsilon^4 & n^3 \epsilon^4 \\
    n^3 \epsilon^4 & n^3 \epsilon^4 & n^3 \epsilon^4 \\
    n^3 \epsilon^4 & n^3 \epsilon^4 & n^3 \epsilon^4
  \end{bmatrix}.
\]

Then $$\text{tr}(\Omega^4) = \sum_{i=1}^n n^3 \epsilon^4 = n \cdot n^3 \epsilon^4 = (n \epsilon)^4.$$

\subsubsection{Relationship between bound on matrices difference and bound on inverse matrices difference}\label{sec:more_algebra_diffInvDiff}

Let $A, B$ be invertible $n \times n$ matrices and let $|| \cdot ||$ denote some submultiplicative matrix norm. Define $\Delta = B-A$. Assume $$||A^{-1}\Delta||<1$$ to ensure that $B$ is nonsingular. For the bound on $||A^{-1} - B^{-1}||$, we will use the following result from \cite{horn2012condition}:
\begin{align}\label{eq:binv_bound}
||B^{-1}|| \leq \frac{||A^{-1}||}{1-||A^{-1} \Delta||}.
\end{align}

This bound comes from the following:
\begin{align*}
||B^{-1}|| &= ||A^{-1}-A^{-1}BB^{-1} + A^{-1}AB^{-1}|| \tag*{(Add 0 matrix and multiply by identity)} \\
&= ||A^{-1}-A^{-1} \Delta B^{-1}|| \tag*{(Factor out terms)} \\ 
&\leq ||A^{-1}|| + ||A^{-1} \Delta B^{-1}|| \tag*{(Triangle inequality)} \\
&\leq ||A^{-1}|| + ||A^{-1} \Delta|| \ ||B^{-1}|| \tag*{(Submultiplicativity of norm)}
\end{align*}
Dividing both sides by $||B^{-1}||$ in the inequality $||B^{-1}|| \leq ||A^{-1}|| + ||A^{-1} \Delta|| \ ||B^{-1}||$ gives 
$$1 \leq \frac{||A^{-1}||}{||B^{-1}||} + ||A^{-1} \Delta||,$$ giving the result in equation (\ref{eq:binv_bound}).

We will also use the result:
\begin{align}\label{eq:oneMinABinv_bound}
(1-||AB||)^{-1} \leq (1-||A|| \ ||B||)^{-1},
\end{align}
derived as follows:
\begin{align*}
& ||AB|| \leq ||A|| \ ||B|| \tag*{(Submultiplicativity of norm)}\\
\implies & -||AB|| \geq -||A|| \ ||B|| \tag*{(Multiply by -1)}\\
\implies & 1 -||AB|| \geq 1 -||A|| \ ||B|| \tag*{(Add 1 to both sides)}\\
\implies & (1 -||AB||)^{-1} \geq (1 -||A|| \ ||B||)^{-1}. \tag*{($a\geq b \implies \frac{1}{a}\leq\frac{1}{b}$)}\\
\end{align*}

Moving to the overall bound on $||A^{-1} - B^{-1}||,$ we have
\begin{align}
||A^{-1} - B^{-1}|| &= ||A^{-1}BB^{-1} - A^{-1}AB^{-1}|| \tag*{}\\
&= ||A^{-1} \Delta B^{-1}|| \tag*{(Factor out terms)}\\
&\leq ||A^{-1} \Delta|| \ ||B^{-1}|| \tag*{(Submultiplicativity of norm)} \\
&\leq \frac{||A^{-1} \Delta|| \ ||A^{-1}||}{1-||A^{-1} \Delta||} \tag*{(Equation (\ref{eq:binv_bound}))} \\
&\leq \frac{||A^{-1}||^2 \ ||\Delta||}{1-||A^{-1} \Delta||} \tag*{(Submultiplicativity of norm)} \\
&\leq \frac{||A^{-1}||^2 \ ||\Delta||}{1-||A^{-1}|| \  ||\Delta||} \tag*{(Equation (\ref{eq:oneMinABinv_bound}))}
\end{align}

Then if $||A-B||$ is bounded, i.e. $||A-B||\leq c$ for some $c>0,$ and $||A^{-1}\Delta||<1$ the bound on $||A^{-1} - B^{-1}||$ can be expressed as
$$||A^{-1} - B^{-1}|| \leq \frac{c \ ||A^{-1}||^2}{1-c \ ||A^{-1}||}.$$

Given that one can choose the bound $c$, it can be ensured that $||A^{-1}\Delta||<1$, because 
\begin{align*}
||A^{-1}\Delta|| &= ||A^{-1}(B - A)|| \\
&\leq ||A^{-1}|| \ ||B-A|| \tag*{(Submultiplicativity of norm)} \\
&\leq c ||A^{-1}|| \tag*{($||B-A|| = ||A-B||\leq c$)}
\end{align*}
so if $c<\frac{1}{||A^{-1}||},$ then $||A^{-1}\Delta|| \leq c ||A^{-1}|| < 1.$


\subsection{Heteroskedastic noise derivation}\label{sec:gp_posterior_hetero}

We can find the GP posterior covariance and expectation under heteroskedastic variance. Recall $\text{Cov}(\bm{f}|\bm{y}, X, \Omega) = K - K(K + D^{-1})^{-1}K.$ By the Sherman-Morrison-Woodbury formula and algebraic manipulation, we have
\begin{align*}
    (K + D^{-1})^{-1} &= K^{-1} - K^{-1}(D+K^{-1})^{-1}K^{-1} \\
    \implies \quad &K(K + D^{-1})^{-1}K = KK^{-1}K - KK^{-1}(D+K^{-1})^{-1}K^{-1}K \\
    \color{white} \implies \quad & \color{white}K(K + D^{-1})^{-1}K \color{black}= K - (D+K^{-1})^{-1}\\
    \implies \quad &K-K(K + D^{-1})^{-1}K = K - K + (K + D^{-1})^{-1} \\
    \color{white}\implies \quad &\color{white}K-K(K + D^{-1})^{-1}K \color{black}= (D+K^{-1})^{-1}\\
    \color{white}\implies \quad &\color{white}K-K(K + D^{-1})^{-1}K \color{black}= KK^{-1}(D+K^{-1})^{-1}\\
    \color{white}\implies \quad &\color{white}K-K(K + D^{-1})^{-1}K \color{black}= K([D+K^{-1}]K)^{-1}\\
    \color{white}\implies \quad &\color{white}K-K(K + D^{-1})^{-1}K \color{black}= K(DK+I)^{-1} \\
    \color{white}\implies \quad &\color{white}K-K(K + D^{-1})^{-1}K \color{black}= K(DK + I)^{-1}DD^{-1} \\
    \color{white}\implies \quad &\color{white}K-K(K + D^{-1})^{-1}K \color{black}= K(K + D^{-1})^{-1}D^{-1} \\
    \color{white}\implies \quad &\color{white}K-K(K + D^{-1})^{-1}K \color{black}= D^{-1}(K + D^{-1})^{-1}K.
\end{align*}

Thus, $\text{Cov}(\bm{f}|\bm{y}, X, \Omega) = K - K(K + D^{-1})^{-1}K = K(DK+I)^{-1} = D^{-1}(K + D^{-1})^{-1}K.$

We can also find the GP posterior mean under heteroskedastic variance. Recall $\mathbb{E}(\bm{f}|\bm{y}, X, \Omega) = K(K + D^{-1})^{-1}\bm{y}.$ Then, via simple algebraic manipulation, we have
\begin{align*}
    K(K + D^{-1})^{-1}\bm{y} &= K(K + D^{-1})^{-1}D^{-1}D\bm{y} \\
    &= K(D [K + D^{-1}])^{-1}D\bm{y}\\
    &= K(DK + I)^{-1}D\bm{y} \\
    &= \text{Cov}(\bm{f}|\bm{y}, X, \Omega)D\bm{y}.
\end{align*}

The bound on the KL divergence, which can be found in equation (\ref{eq:big_KL_pts1to3}), then uses $M=DK+I$ for the GP covariance rather than $M=\tau K+I.$ The bound on part (iii.) of the KL divergence, which can be found in equation (\ref{eq:bnd_partiii}), then only requires the slight modification of replacing $\tau^2||\bm{y}||^2$ with $||D\bm{y}||^2$ due to this adjustment to the GP mean under heteroskedasticity.

The sampling steps in Algorithm 1 also need adjusting to account for heteroskedastic noise. The modification is described below. We avoid working with the non-symmetric matrix $DK + I$ by using the form of the posterior covariance $D^{-1}(K + D^{-1})^{-1}K$; while it may look computationally more intensive, it actually requires fewer HODLR operations than the alternative forms and leads to more stable sampling because all HODLR matrices are symmetric.

\textbf{Algorithm 1 (heteroskedastic)}: Given noisy observations $\bm{y}$ of an underlying function $\bm{f}$ at inputs $X,$ diagonal precision matrix $D,$ GP hyperparameters $\Theta,$ and HODLR specifications tolerance $\epsilon$ and maximum block size $B,$ approximate a sample from $p(\bm{f}|\bm{y}, X, \Theta)$ using a factorization of cost $\mathcal{O}(n\log^2{}n)$ and operations of cost at most $\mathcal{O}(n\log{}n)$. Sampling proceeds as follows: 
\begin{enumerate}[\textit{Step} 1:]
\item Create $\mathcal{H}$ matrix decomposition of $K$ and $K + D^{-1}$ with tolerance $\epsilon$, call these $\tilde{K}$ and $\tilde{P},$ respectively (assembly, cost $\mathcal{O}(n\log{} n)$).
\item Get $W,$ the symmetric factor of $\tilde{K}$ s.t. $\tilde{K}=WW^T$ (factorization, cost $\mathcal{O}(n\log{}^2n)$).
\item Sample $\bm{a} \sim \text{N}(0,D)$ and $\bm{b} \sim \text{N}(0,I),$ both independent $n-$dimensional random vectors.
\item Let $Z = \tilde{K} a  + W b$ (matrix vector products, cost $\mathcal{O}(n\log{}n)$). \label{alg1h_1}
\item Let $Z^* = \tilde{P}^{-1} Z$ (solve linear system, cost $\mathcal{O}(n\log{}n)$).\label{alg1h_2}
\item Let $Z^{**} = D^{-1} Z^*$ (multiply vector by diagonal matrix, cost $\mathcal{O}(n)$).
\item Let $R = \tilde{K} \tilde{P}^{-1} \bm{y}$ (solve linear system then matrix vector product, cost $\mathcal{O}(n\log{}n)$).\label{alg1h_3}
\item Finally, let $Z^{***} = R + Z^{**},$ which is the approximated sample from $p(\bm{f}|\bm{y}, X, \Theta)$ (vector addition, cost $\mathcal{O}(n)$).
\end{enumerate}

\addvspace{\baselineskip}
\begin{lemma}\label{thm:alg1het}
From Algorithm 1.h, $Z^{***} \sim \text{N}(\tilde{\mu}_{f}, \tilde{\Sigma}_{f}),$ where $\tilde{\Sigma}_f = \tilde{K}\tilde{P}^{-1}D^{-1}$ and $\tilde{\mu}_f = \tilde{K}\tilde{P}^{-1}\bm{y}$ are defined to be the approximations of the posterior function variance $\Sigma_f$ and mean $\mu_f,$ respectively.
\end{lemma}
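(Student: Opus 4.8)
The plan is to follow the template of the proof of Lemma~\ref{thm:alg1}: push the mean and covariance of the running vector through \textit{Steps}~\ref{alg1h_1}--8 of the heteroskedastic algorithm, using (i) one structural property of the HODLR approximation and (ii) a one-line algebraic identity linking $\tilde{K}$, $\tilde{P}$, and $D$. The reason this goes through cleanly is that the sampler was built around the symmetric form $D^{-1}(K+D^{-1})^{-1}K$ of the posterior covariance rather than around $K(DK+I)^{-1}$, so every $\mathcal{H}$-matrix that appears is symmetric.

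First I would record the algebra. Because the HODLR decomposition reproduces the dense diagonal blocks of a matrix exactly and $D^{-1}$ is diagonal, compressing $K+D^{-1}$ yields the same object as compressing $K$ and then adding $D^{-1}$; hence $\tilde{P} = \widetilde{K+D^{-1}} = \tilde{K}+D^{-1}$ (the device used for $\widetilde{\tau K + I}=\tilde{K}+I$ in Lemma~\ref{thm:alg1}, now with a diagonal shift in place of the identity). Left-multiplying by $D$ gives $D\tilde{P} = D\tilde{K}+I$, and substituting $\tilde{P}=\tilde{K}+D^{-1}$ one checks directly that $\tilde{K}D\tilde{P} = \tilde{K}D\tilde{K}+\tilde{K} = \tilde{P}D\tilde{K}$. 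These two identities are all that is needed: from $D\tilde{K}+I=D\tilde{P}$ we get $(D\tilde{K}+I)^{-1}=\tilde{P}^{-1}D^{-1}$, so $\tilde{K}(D\tilde{K}+I)^{-1}=\tilde{K}\tilde{P}^{-1}D^{-1}=\tilde{\Sigma}_f$ and $\tilde{\mu}_f = \tilde{\Sigma}_f D\bm{y} = \tilde{K}\tilde{P}^{-1}\bm{y}$, matching the statement.

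Then I would propagate the distribution. After \textit{Step}~\ref{alg1h_1}, $Z=\tilde{K}\bm{a} + W\bm{b}$ with independent $\bm{a}\sim\text{N}(0,D)$ and $\bm{b}\sim\text{N}(0,I)$, so $Z$ is mean-zero Gaussian with $\text{Var}(Z)=\tilde{K}D\tilde{K}^T + WW^T = \tilde{K}D\tilde{K} + \tilde{K} = \tilde{K}D\tilde{P}$, using $\tilde{K}$ symmetric, $\tilde{K}=WW^T$, and the identity above. After \textit{Step}~\ref{alg1h_2}, $Z^{\ast}=\tilde{P}^{-1}Z$ is Gaussian with $\text{Var}(Z^{\ast})=\tilde{P}^{-1}\tilde{K}D\tilde{P}\tilde{P}^{-1}=\tilde{P}^{-1}\tilde{K}D$, which by $\tilde{K}D\tilde{P}=\tilde{P}D\tilde{K}$ also equals $D\tilde{K}\tilde{P}^{-1}$. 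After \textit{Step}~6, $Z^{\ast\ast}=D^{-1}Z^{\ast}$ has covariance $D^{-1}(D\tilde{K}\tilde{P}^{-1})D^{-1}=\tilde{K}\tilde{P}^{-1}D^{-1}=\tilde{\Sigma}_f$ and mean $0$. In \textit{Step}~\ref{alg1h_3}, $R=\tilde{K}\tilde{P}^{-1}\bm{y}=\tilde{\mu}_f$ is deterministic, so $Z^{\ast\ast\ast}=R+Z^{\ast\ast}$ from \textit{Step}~8 is an affine image of the jointly Gaussian pair $(\bm{a},\bm{b})$, hence Gaussian, with mean $\tilde{\mu}_f$ and covariance $\tilde{\Sigma}_f$, which is the claim.

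The step I expect to be the main obstacle is justifying $\tilde{P}=\tilde{K}+D^{-1}$ with enough care: one must argue that the perturbation $D^{-1}$ lies entirely inside the dense diagonal blocks of the HODLR partition, so that the low-rank off-diagonal factors of $\widetilde{K+D^{-1}}$ and of $\tilde{K}$ coincide and the diagonal blocks differ only by the restriction of $D^{-1}$; without this, $\tilde{P}^{-1}$ and $\tilde{K}$ would not interact as cleanly and the commutation $\tilde{K}D\tilde{P}=\tilde{P}D\tilde{K}$ would fail. Everything after that is bookkeeping of Gaussian linear maps, exactly as in the homoskedastic proof, with the role played there by symmetry of $\tilde{M}^{-1}\tilde{K}$ now played by $\tilde{K}D\tilde{P}=\tilde{P}D\tilde{K}$.
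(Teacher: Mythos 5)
Your proposal is correct and follows essentially the same route as the paper's proof, which simply declares the argument ``analogous to the homoskedastic case'' and records the one key computation $\text{Cov}(Z^*) = \tilde{P}^{-1}\tilde{K}D(\tilde{K}+D^{-1})\tilde{P}^{-1} = \tilde{P}^{-1}\tilde{K}D$ --- exactly the covariance propagation you carry out, resting on the same identification $\tilde{P}=\tilde{K}+D^{-1}$ from the diagonal-preservation property of the HODLR construction. Your explicit commutation identity $\tilde{K}D\tilde{P}=\tilde{P}D\tilde{K}$ is just a cleaner way of invoking the symmetry of the posterior covariance that the paper leaves implicit, so the two arguments are the same in substance, with yours supplying more of the bookkeeping.
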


\begin{proof}
Analogous to the proof of Algorithm 1 for homoskedastic variance. Note that step \ref{alg1h_2} relies on the property $(A^{-1})^T = (A^T)^{-1}.$ Specifically, $\text{Cov}(Z^*) = \text{Cov}(\tilde{P}^{-1} Z) = \tilde{P}^{-1} \text{Cov}(Z) (\tilde{P}^{-1})^T,$ implying $\text{Cov}(Z^*) = \tilde{P}^{-1}(\tilde{K}D\tilde{K} + \tilde{K})\tilde{P}^{-1} = \tilde{P}^{-1}\tilde{K}(D\tilde{K} + I)\tilde{P}^{-1} = \tilde{P}^{-1}\tilde{K}D(\tilde{K} + D^{-1})\tilde{P}^{-1} = \tilde{P}^{-1}\tilde{K}D.$
\end{proof} 

\newpage
\section{Supplemental Materials}

\subsection{HODLR decomposition and factorization}\label{sec:symm_fac_alg}

The symmetric factorization of HODLR matrices was described thoroughly in Ambikasaran (2014). Rather than restating the algorithm in full here, we include an example to provide intuition into the decomposition and symmetric factorization.

For the example matrix, $n=200$ uniform data points were sampled in the range $(0,1)$ and then sorted, and the exponential covariance kernel $k(x,x')=\text{exp}(-4(x-x')^2)$ was used. Figure \ref{fig:example_1} shows the matrix and the block structure of the matrix as a 2-level HODLR matrix.

\begin{figure}[!htb]
  \centering
  \includegraphics[width=.45\linewidth]{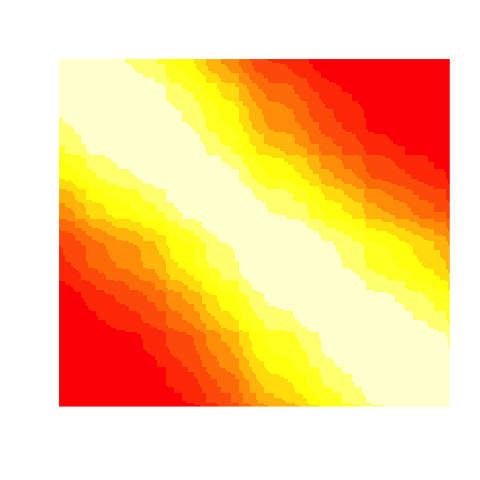}
  \includegraphics[width=.45\linewidth]{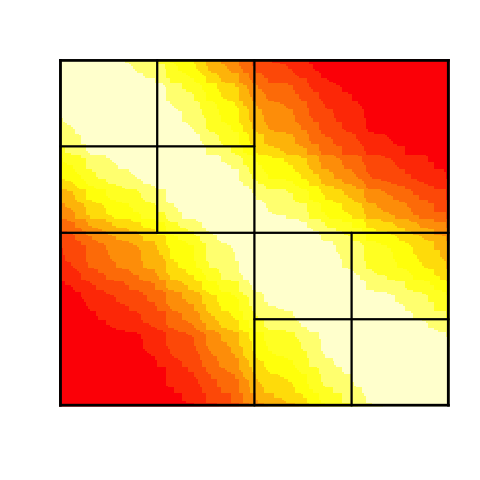}
  \caption{Left: realizations of covariance kernel for $d=1$ with sorted inputs. Right: block partition of a 2-level HODLR tree.}
  \label{fig:example_1}
\end{figure}

After the matrix has been divided into block diagonal HODLR structure, the off-diagonal blocks are approximated to the desired tolerance (i.e., the desired maximum absolute difference between the true and approximated matrix entries). In the example, we set the tolerance to $10^{-8}.$ Figure \ref{fig:example_2} shows the low-rank description of the off-diagonal blocks alongside the elementwise difference between the true and approximated matrices when the tolerance is set to $10^{-8}$. The rank of the large off-diagonal blocks with this tolerance is 7, and that of the smaller off-diagonal blocks is 5. More explicitly, $U_1^{(1)}$ and $V_1^{(1)}$ are $\frac{n}{2} \times 7$ matrices, and $U_1^{(2)}, V_1^{(2)}, U_2^{(2)}$ and $V_2^{(2)}$ are $\frac{n}{4} \times 5$ matrices. Note that the block diagonal elements of the matrix are dense, not approximated by low rank matrices.

\begin{figure}[!htb]
  \centering
  \includegraphics[width=.43\linewidth]{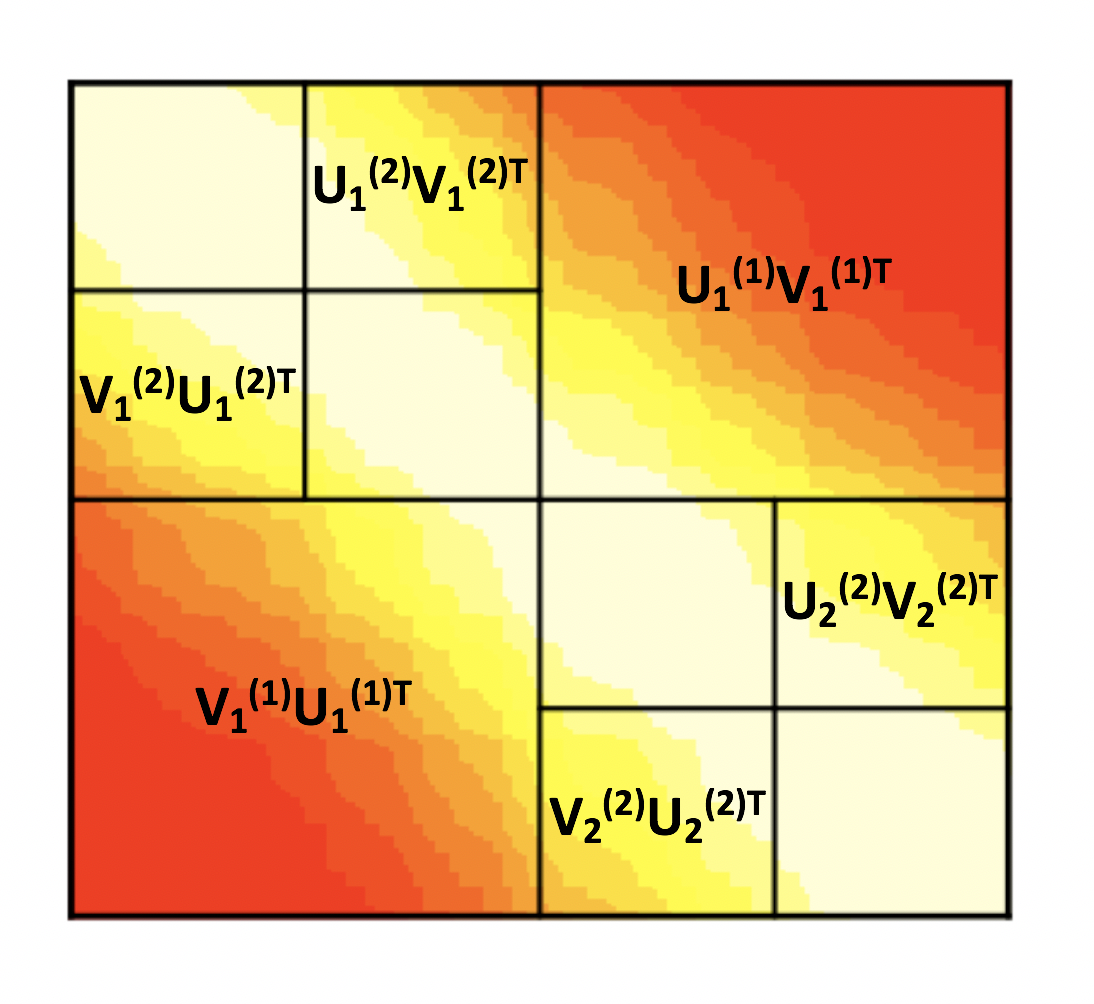}
  \includegraphics[width=.4\linewidth]{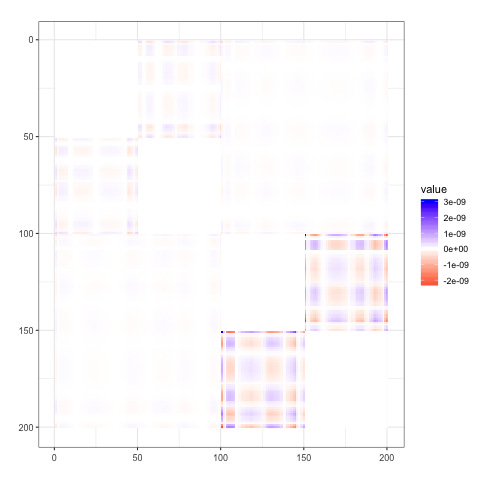}
  \caption{Left: HODLR matrix with low-rank description overlaid on off-diagonal blocks. Right: Difference between the true and approximated matrices when the tolerance for each off-diagonal block is set to $10^{-8}$.}
  \label{fig:example_2}
\end{figure}

The matrix structure shown in Figure \ref{fig:example_2} can be manipulated to provide a symmetric factorization of the original matrix $A$ as $A=WW^T,$ where $W=A_{\ell}A_{\ell-1}\ldots A_1 A_0$ is the product of a block-diagonally dense matrix and $\ell$ matrices that are low-rank updates to the identity (where $\ell$ is the level of the HODLR matrix). To quickly summarize, the first step of the algorithm is to symmetrically factor each of the diagonal blocks of the matrix. Then, the inverse of the relevant block diagonal matrix/matrices is applied to each $U_i^{j},V_i^{j}$, leading to the factorization $A=A_{\ell} \tilde{A}_{\ell} A_{\ell}^T$ where $A_{\ell}$ is a block diagonal matrix with entries being the symmetric factorization of the diagonal blocks of $A$ and diagonal blocks of $\tilde{A}_{\ell}$ are low-rank updates to the identity matrix. These diagonal blocks can be quickly factorized using the Sherman-Morrison-Woodbury formula and Theorem 3.1 in  Ambikasaran (2014). The algorithm proceeds iteratively until reaching the 0th level. Full details are provided in Ambikasaran (2014).

\subsubsection{Cost and dimension of input space}\label{SI_cost_inputdim}

Figure \ref{fig:offdiag_example} shows realizations of covariance kernels for randomly generated $\bm{x}$ having $d=1$ and $d=2$ with the large off-diagonal block emphasized. These $\frac{n}{2} \times \frac{n}{2}$ off-diagonal blocks can be approximated by rank $r$ matrices. If one requires the largest absolute difference to be less than $10^{-12},$ the $d=1$ example shown requires $r=8$ while the $d=2$ example requires $r=56$ (here we use the SVD because this is a small $n=200$ example). The higher the dimension of the input space, the more complex the structure in the covariance matrix even after sorting and the more costly storage and computation become. In the example, the storage cost of the dense $100 \times 100$ off-diagonal block is 80,000 bytes and that of the SVD approximation with $r=8$ is 12,800 bytes, with an equal number of flops required for matrix-vector multiplication. The real savings come as $n$ increases. With the same hyperparameters and range of data, setting $n=2000$ leads to a storage cost for the dense off-diagonal block matrix of 8,000,000 bytes but only requires an $r=9$ matrix for the same fidelity of approximation (i.e., 144,000 bytes of storage).

\begin{figure}[!htb]
  \centering
  \includegraphics[width=.45\linewidth]{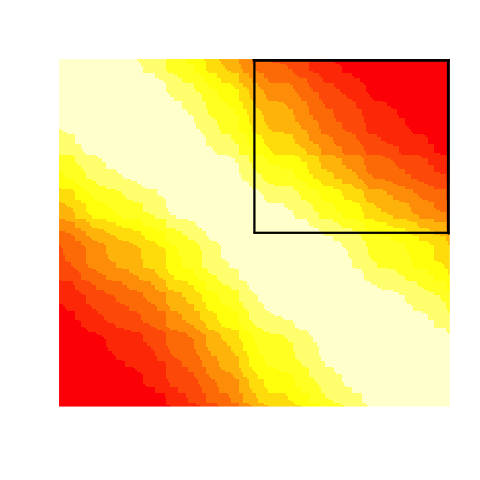}
  \includegraphics[width=.45\linewidth]{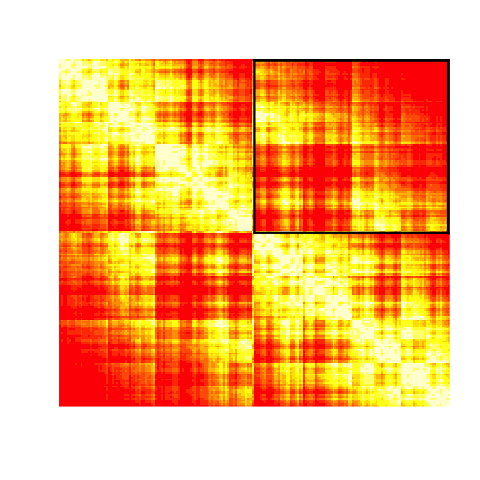}
  \caption{Left: realizations of covariance kernel for $d=1$ with sorted inputs. Right: realizations of covariance kernel for $d=2$ with inputs sorted via a $kd$-tree.}
  \label{fig:offdiag_example}
\end{figure}

\subsubsection{Determinant computation speedup}

As an example of how the HODLR factorization enables fast algebraic operations, consider the calculation of $\text{det}(A)$, the determinant of $A$. We will rely on Sylvester’s Determinant Theorem and two facts regarding the determinant: first, for square matrices the determinant of the product of the matrices is equal to the product of the determinants, and second, the determinant of a block diagonal matrix is the product of the determinants of the individual blocks. Suppose $A$ is factorized as in Figure \ref{fig:hodlrmatrix_fac} into $A_2 A_1 A_0.$ The only full-rank matrices are those on the diagonal of $A_2,$ and the number of levels in the factorization can be chosen such that these are of sufficiently low dimension that computing the determinant of any individual block is of low computational expense. The blocks of $A_1$ and $A_0$ are low rank updates to the identity and thus can be computed using Sylvester’s Determinant Theorem, which says that $\text{det}(I_m + TS) = \text{det}(I_n + ST)$ for $S \in \mathbb{R}^{n \times p}$ and $T \in \mathbb{R}^{p \times n}$.

\subsection{Additional simulation results}\label{sec:sim_replicates}

\subsubsection{Replicate simulation runs}

\begin{figure}[htp]
    \centering
    \includegraphics[width=0.95\textwidth]{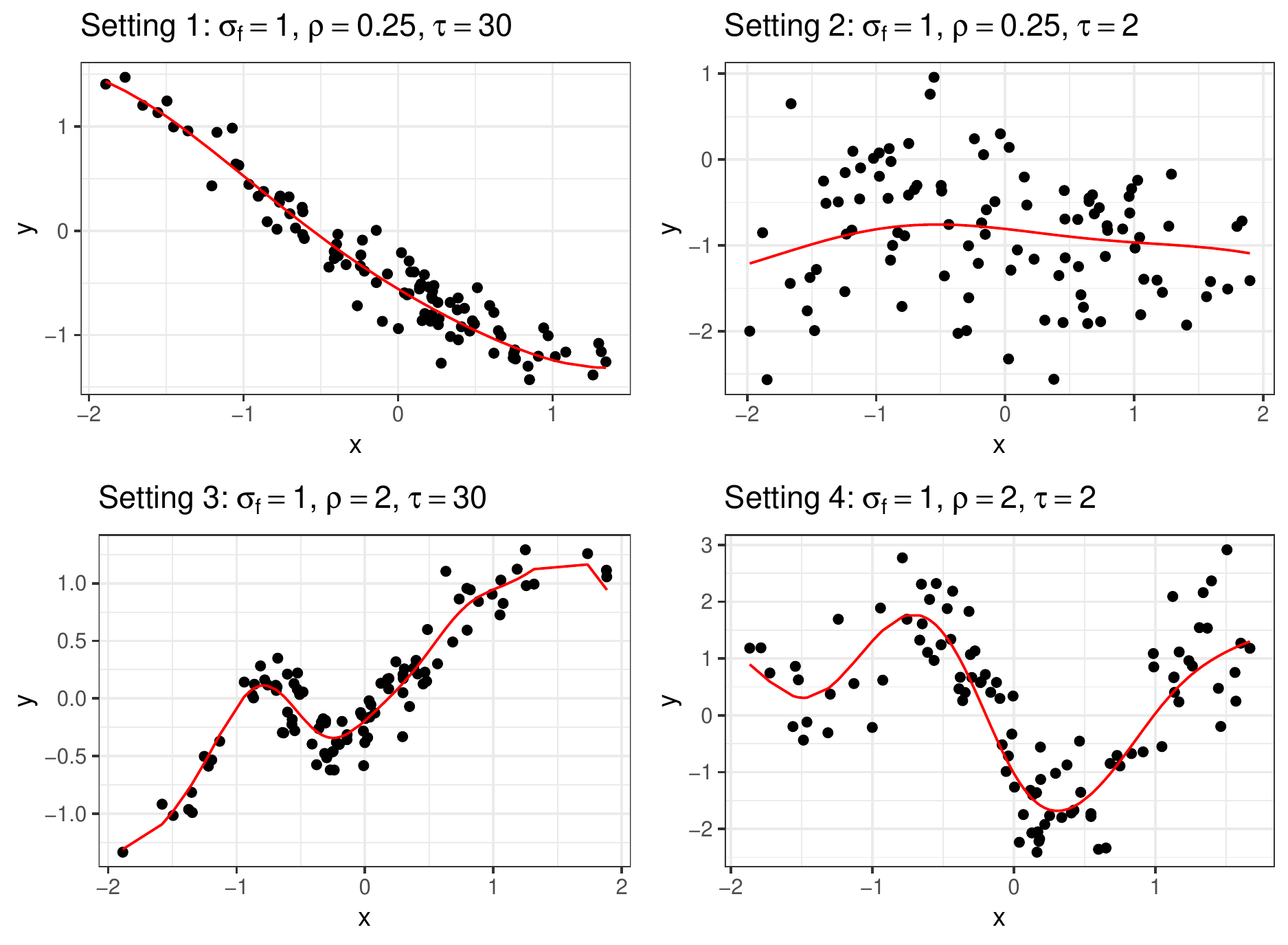}
    \caption{Data and true parameter values from each simulation setting.}
    \label{fig:simRepData}
\end{figure}

\begin{figure}[htp]
    \centering
    \includegraphics[height=0.95\textheight]{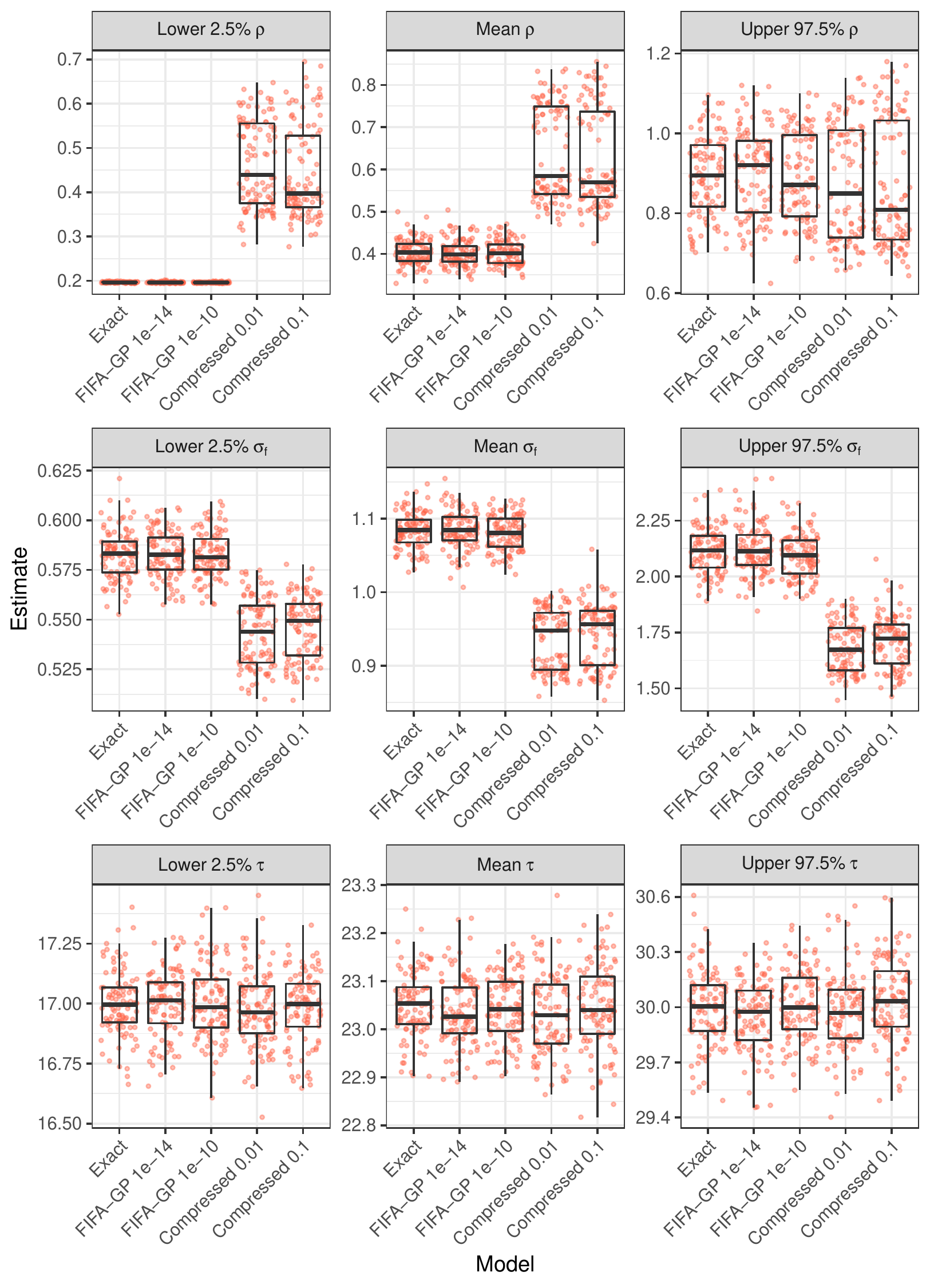}
    \caption{Parameter estimates from each sampler run 100 times under Setting 1.}
    \label{fig:simRepParams1}
\end{figure}

\begin{figure}[htp]
    \centering
    \includegraphics[height=0.95\textheight]{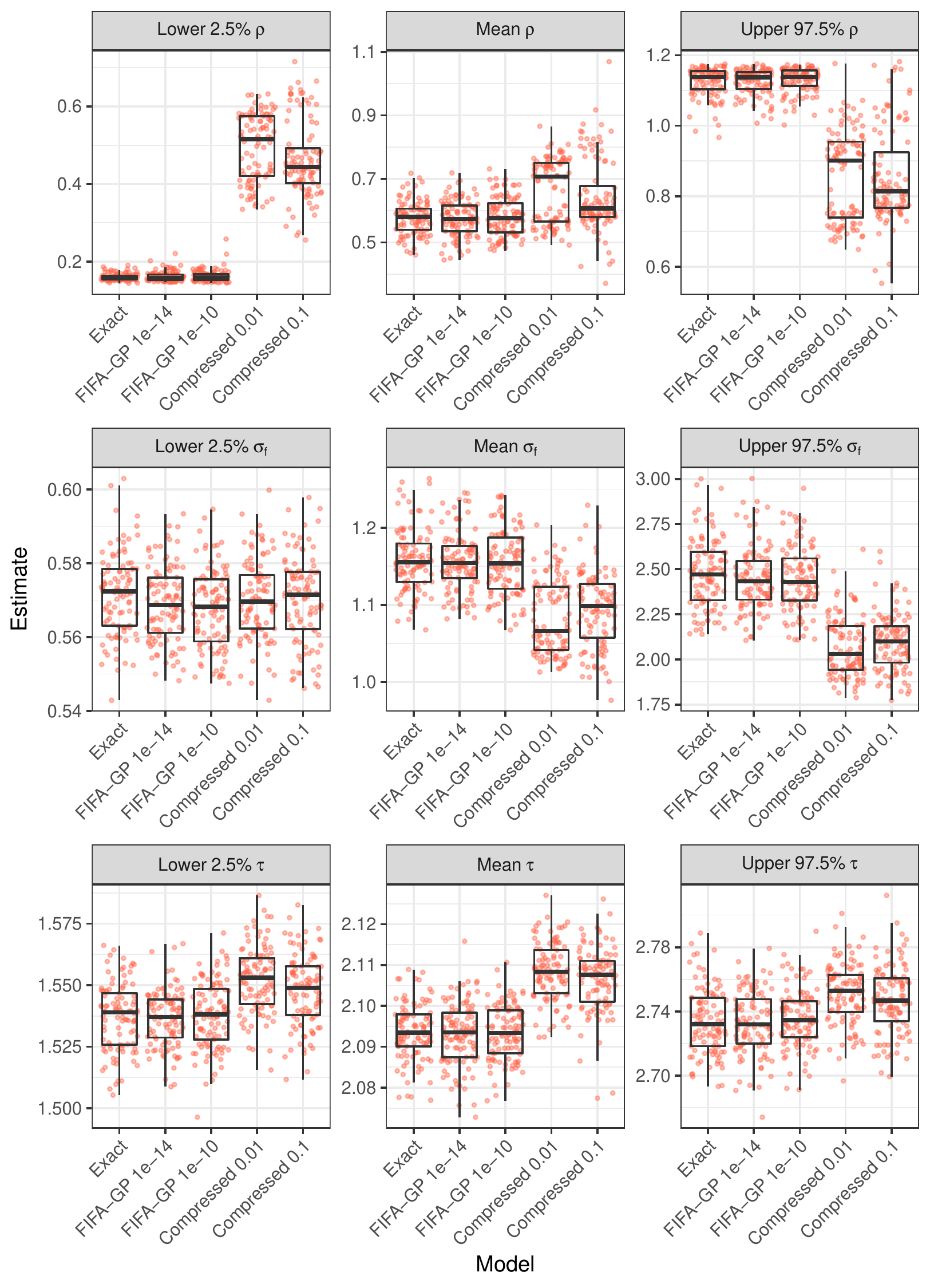}
    \caption{Parameter estimates from each sampler run 100 times under Setting 2.}
    \label{fig:simRepParams2}
\end{figure}

\begin{figure}[htp]
    \centering
    \includegraphics[height=0.95\textheight]{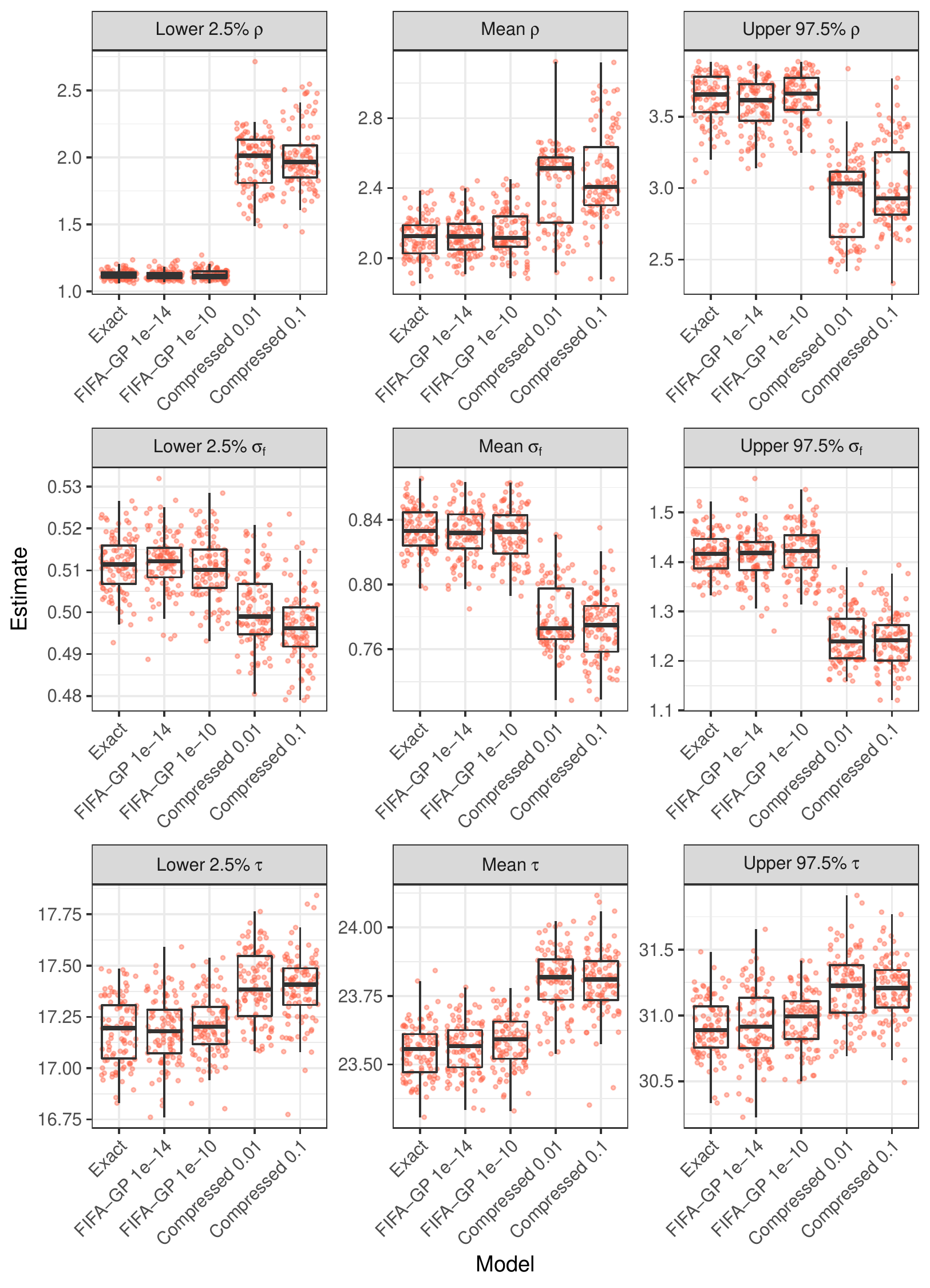}
    \caption{Parameter estimates from each sampler run 100 times under Setting 3.}
    \label{fig:simRepParams3}
\end{figure}

\begin{figure}[htp]
    \centering
    \includegraphics[height=0.95\textheight]{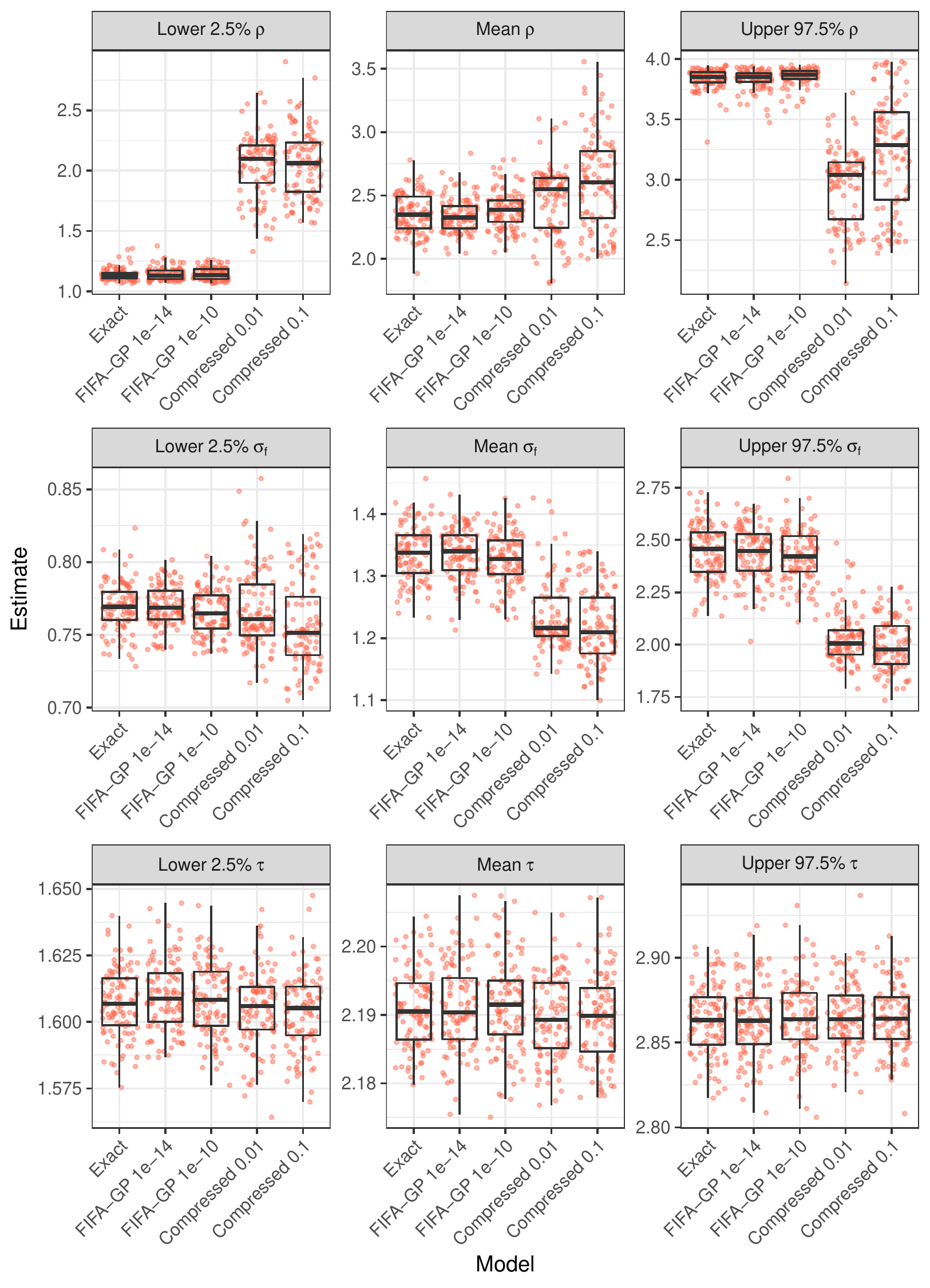}
    \caption{Parameter estimates from each sampler run 100 times under Setting 4.}
    \label{fig:simRepParams4}
\end{figure}

\begin{figure}[htp]
    \centering
    \includegraphics[height=0.75\textheight]{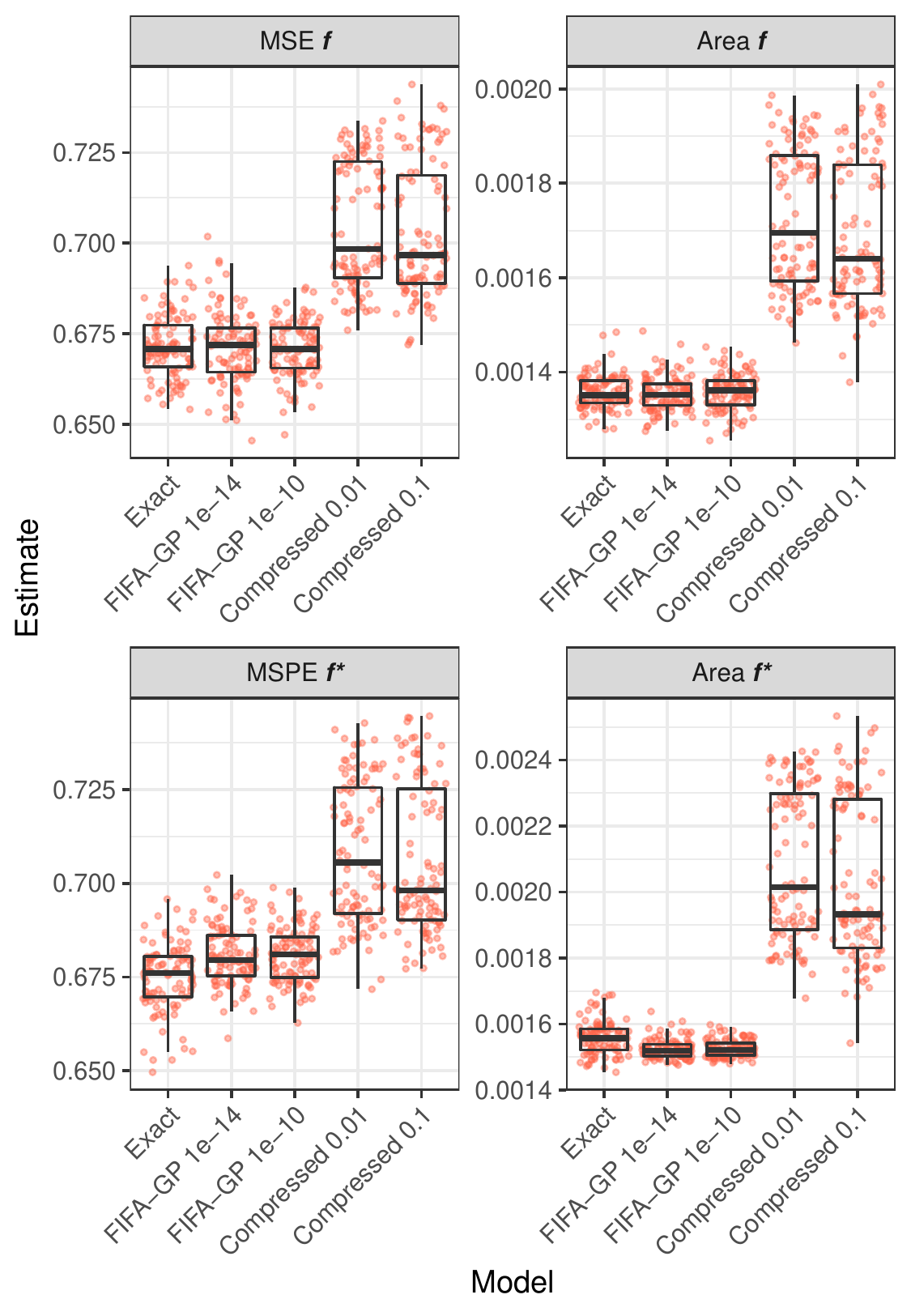}
    \caption{Performance results from each sampler run 100 times under Setting 1.}
    \label{fig:simRepPerf1}
\end{figure}

\begin{figure}[htp]
    \centering
    \includegraphics[height=0.75\textheight]{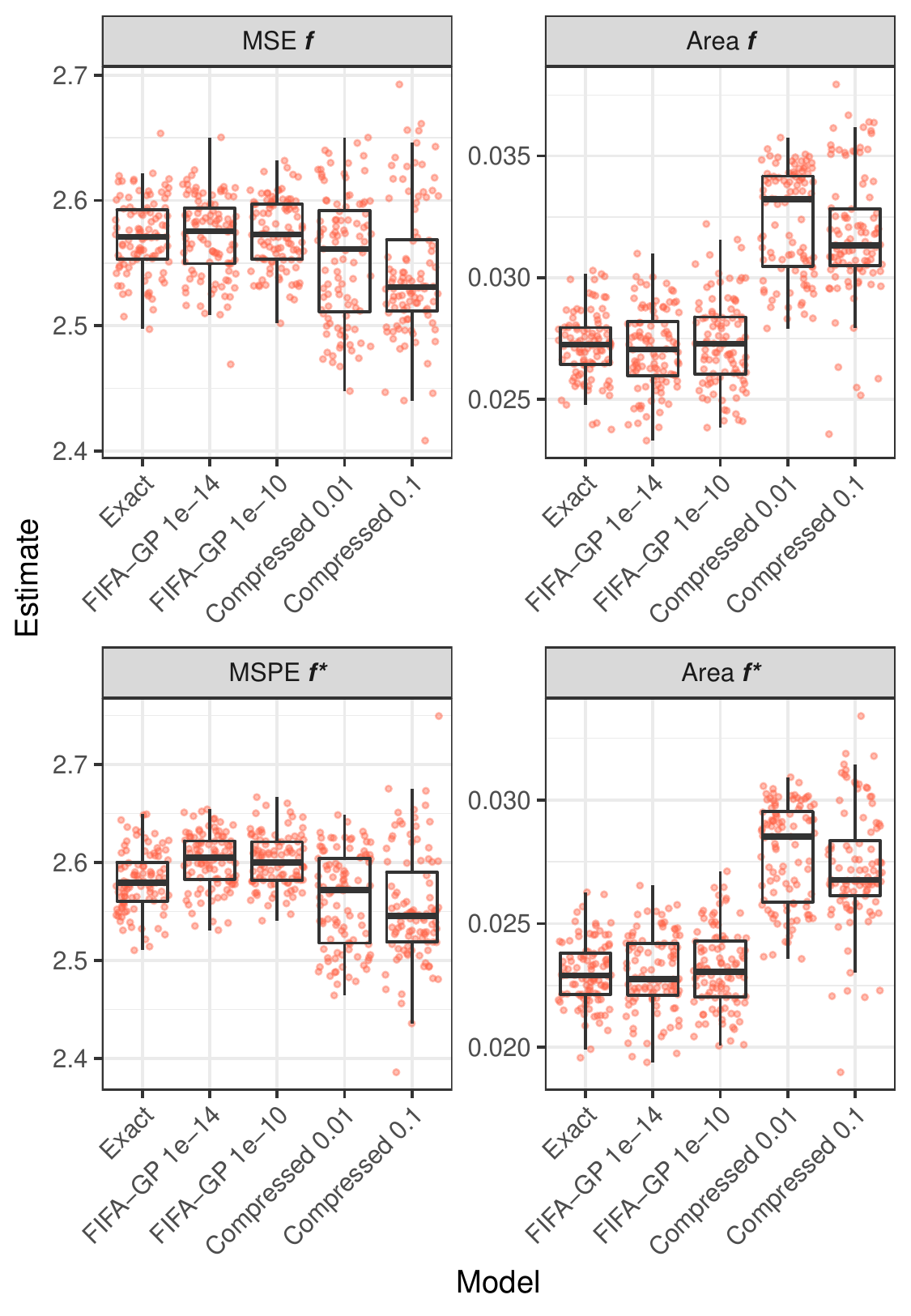}
    \caption{Performance results from each sampler run 100 times under Setting 2.}
    \label{fig:simRepPerf2}
\end{figure}

\begin{figure}[htp]
    \centering
    \includegraphics[height=0.75\textheight]{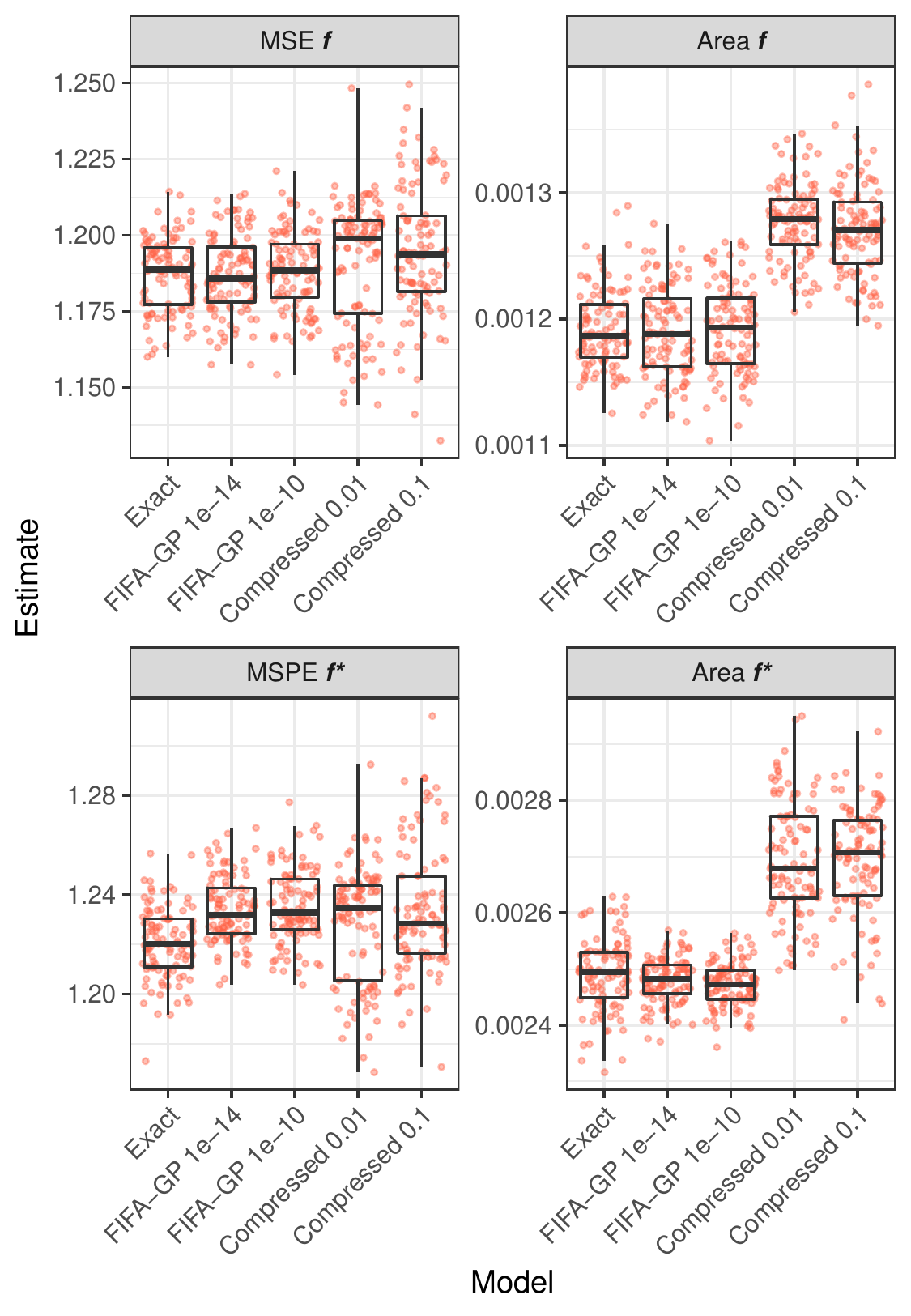}
    \caption{Performance results from each sampler run 100 times under Setting 3.}
    \label{fig:simRepPerf3}
\end{figure}

\begin{figure}[htp]
    \centering
    \includegraphics[height=0.75\textheight]{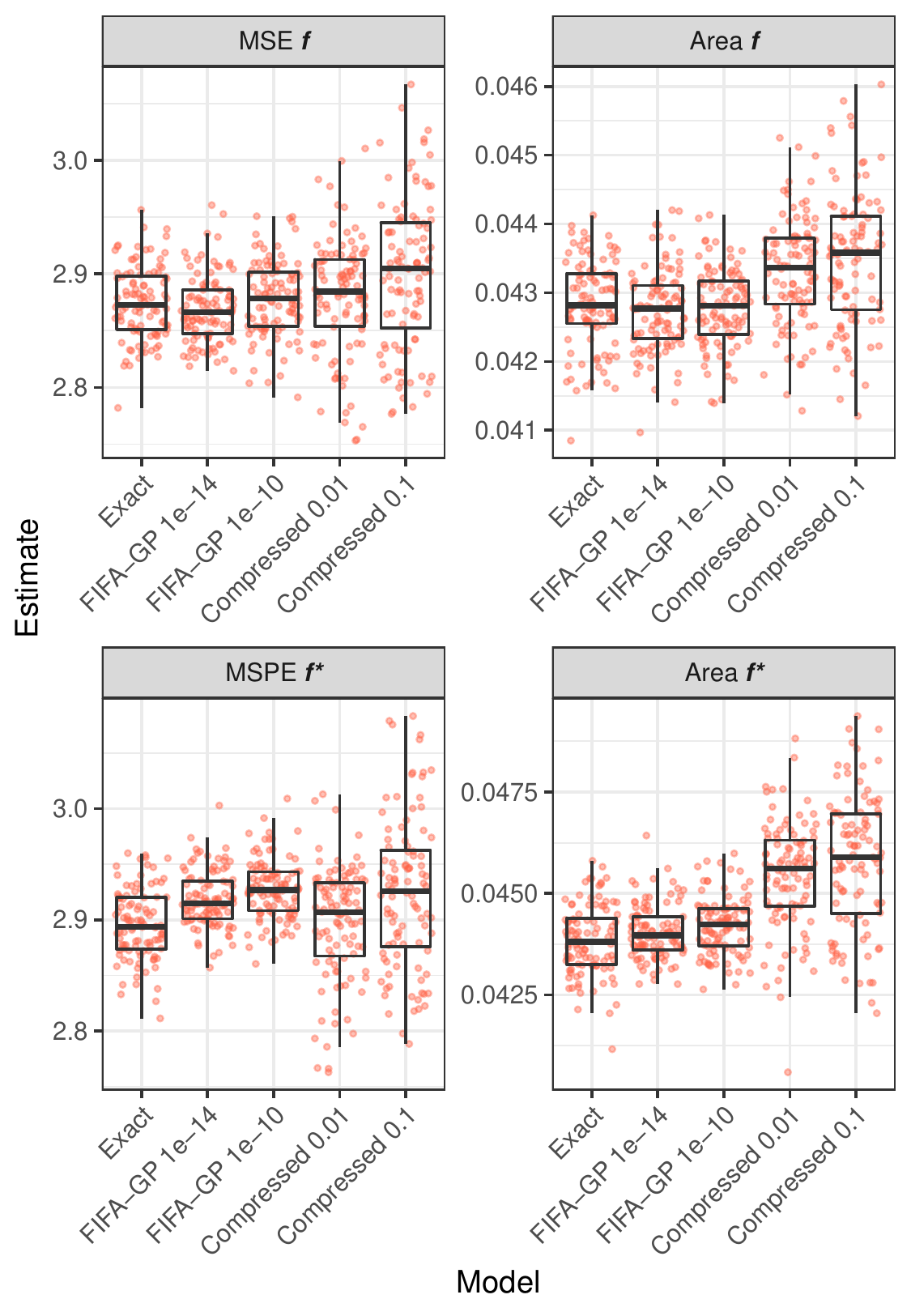}
    \caption{Performance results from each sampler run 100 times under Setting 4.}
    \label{fig:simRepPerf4}
\end{figure}

\newpage

\begin{landscape}

\subsubsection{Comparison tables for $n=100$ and $n=500$}

\begin{table}[!phtb]
    \begin{tabular}{l c c c c c c c c }\toprule
        & & Truth & Exact GP & \multicolumn{2}{c}{FIFA-GP}  & \multicolumn{2}{c}{Compressed GP} \\
        \cmidrule(r){3-3} \cmidrule(r){4-4}\cmidrule(l){5-6}\cmidrule(l){7-8}
        &&& & $\epsilon_{\text{max}}=10^{-14}$ & $\epsilon_{\text{max}}=10^{-10}$ & $\epsilon_{\text{fro}}=0.01$ & $\epsilon_{\text{fro}}=0.1$ \\\midrule
        \rule{0pt}{4ex}Smooth and & $\text{MSPE}_{f^*}$ & - & 0.001 & 0.001 & 0.001 & 0.001 & 0.001 \\
            low noise & 95\% CI ($\bm{f}^*$) Area & - & 0.35 & 0.35 & 0.35 & 0.38 & 0.39 \\
            &$\hat{\tau} \ [\tau_{\text{ll}},\tau_{\text{ul}}]$ & $\tau = 30$ & 30.4 [26.8, 34.4] & 30.4 [26.6, 34.3] & 30.4 [26.7, 34.3] & 30.4 [26.7, 34.2] & 30.4 [26.7, 34.4] \\
             &$\hat{\sigma}_f \ [\sigma_{f,\text{ll}},\sigma_{f,\text{ul}}]$ & $\sigma_f=1$ & 1.11 [0.56, 2.39] & 1.25 [0.64, 2.52] & 1.26 [0.61, 2.69] & 0.85 [0.52, 1.49] & 0.79 [0.48, 1.29] \\
             &$\hat{\rho} \ [\rho_{\text{ll}},\rho_{\text{ul}}]$ & $\rho=0.25$ & 0.36 [0.17, 0.85] & 0.31 [0.16, 0.58] & 0.32 [0.16, 0.61] & 0.55 [0.51, 0.69] & 0.70 [0.59, 0.81] \\
             & Time (min) & - & 15.3 & 4.2 & 3.9 & 1.2 & 1.2 \\
        \rule{0pt}{4ex}Smooth and & $\text{MSPE}_{f^*}$ & - & 0.003 & 0.004 & 0.003 & 0.002 & 0.003 \\
            high noise & 95\% CI ($\bm{f}^*$) Area & - & 1.15 & 1.12 & 1.16 & 1.25 & 1.19 \\
            &$\hat{\tau} \ [\tau_{\text{ll}},\tau_{\text{ul}}]$ & $\tau = 2$ & 1.82 [1.60, 2.06] & 1.83 [1.60, 2.06] & 1.82 [1.60, 2.05] & 1.82 [1.60, 2.05] & 1.82 [1.60, 2.04] \\
             &$\hat{\sigma}_f \ [\sigma_{f,\text{ll}},\sigma_{f,\text{ul}}]$ & $\sigma_f=1$ & 0.57 [0.31, 1.12] & 0.54 [0.31, 1.01] & 0.56 [0.31, 1.11] & 0.48 [0.29, 0.84] & 0.52 [0.30, 0.94] \\
             &$\hat{\rho} \ [\rho_{\text{ll}},\rho_{\text{ul}}]$ & $\rho=0.25$ & 0.39 [0.13, 1.10] & 0.33 [0.13, 0.89] & 0.41 [0.14, 1.10] & 0.68 [0.58, 0.86] & 0.52 [0.43, 0.65] \\
             & Time (min) & - & 20.0 & 4.2 & 3.9 & 1.2 & 1.1 \\
        \rule{0pt}{4ex}Wiggly and & $\text{MSPE}_{f^*}$ & - & 0.001 & 0.001 & 0.001 & 0.001 & 0.001 \\
            low noise & 95\% CI ($\bm{f}^*$) Area & - & 0.53 & 0.53 & 0.53 & 0.53 & 0.54 \\
            &$\hat{\tau} \ [\tau_{\text{ll}},\tau_{\text{ul}}]$ & $\tau = 30$ & 28.7 [25.2, 32.3] & 28.7 [25.2, 32.5] & 28.7 [25.2, 32.2] & 28.6 [25.2, 32.3] & 28.7 [25.3, 32.3] \\
             &$\hat{\sigma}_f \ [\sigma_{f,\text{ll}},\sigma_{f,\text{ul}}]$ & $\sigma_f=1$ & 1.09 [0.68, 1.82] & 1.07 [0.65, 1.82] & 1.03 [0.65, 1.66] & 0.96 [0.63, 1.48] & 0.94 [0.64, 1.43] \\
             &$\hat{\rho} \ [\rho_{\text{ll}},\rho_{\text{ul}}]$ & $\rho=2$ & 2.05 [1.41, 3.13] & 2.10 [1.43, 3.06] & 2.14 [1.50, 3.27] & 2.34 [2.09, 2.65] & 2.39 [2.17, 2.51] \\
             & Time (min) & - & 20.1 & 4.7 & 4.3 & 1.4 & 1.3 \\
         \rule{0pt}{4ex}Wiggly and & $\text{MSPE}_{f^*}$ & - & 0.015 & 0.015 & 0.015 & 0.015 & 0.015 \\
            high noise & 95\% CI ($\bm{f}^*$) Area & - & 1.83 & 1.81 & 1.82 & 1.76 & 1.75 \\
            &$\hat{\tau} \ [\tau_{\text{ll}},\tau_{\text{ul}}]$ & $\tau = 2$ & 2.05 [1.80, 2.31] & 2.05 [1.80, 2.32] & 2.05 [1.80, 2.32] & 2.05 [1.81, 2.32] & 2.05 [1.80, 2.31] \\
             &$\hat{\sigma}_f \ [\sigma_{f,\text{ll}},\sigma_{f,\text{ul}}]$ & $\sigma_f=1$ & 1.03 [0.64, 1.68] & 1.07 [0.66, 1.85] & 1.07 [0.63, 1.90] & 1.10 [0.69, 1.81] & 1.13 [0.70, 1.85] \\
             &$\hat{\rho} \ [\rho_{\text{ll}},\rho_{\text{ul}}]$ & $\rho=2$ & 2.93 [1.53, 3.92] & 2.69 [1.44, 3.89] & 2.80 [1.47, 3.93] & 2.51 [2.23, 2.74] & 2.31 [2.14, 2.56] \\
             & Time (min) & - & 20.1 & 5.0 & 4.4 & 1.4 & 1.4 \\
        \\ \bottomrule
    \end{tabular}
    \caption{Performance results, parameter estimates, and computing time for Bayesian GP regression with $n=500$ data points simulated using a squared exponential covariance kernel. Time shown is total time for setup and 27,000 iterations through Gibbs sampler, with 2,500 samples retained.}\label{tab:sim_smalln500}
\end{table}

\end{landscape}

\begin{landscape}

\begin{table}[!phtb]
    \begin{tabular}{l c c c c c c c c }\toprule
        & & Truth & Exact GP & \multicolumn{2}{c}{FIFA-GP}  & \multicolumn{2}{c}{Compressed GP} \\
        \cmidrule(r){3-3} \cmidrule(r){4-4}\cmidrule(l){5-6}\cmidrule(l){7-8}
        &&& & $\epsilon_{\text{max}}=10^{-14}$ & $\epsilon_{\text{max}}=10^{-10}$ & $\epsilon_{\text{fro}}=0.01$ & $\epsilon_{\text{fro}}=0.1$ \\\midrule
        \rule{0pt}{4ex}Smooth and & $\text{MSPE}_{f^*}$ & - & 0.005 & 0.005 & 0.005 & 0.006 & 0.006 \\
            low noise & 95\% CI ($\bm{f}^*$) Area & - & 0.94 & 0.95 & 0.94 & 1.01 & 1.01 \\
            &$\hat{\tau} \ [\tau_{\text{ll}},\tau_{\text{ul}}]$ & $\tau = 30$ & 22.5 [16.5, 29.6] & 22.6 [16.7, 29.5] & 22.5 [16.7, 29.3] & 22.4 [16.6, 29.2] & 22.4 [16.4, 29.3] \\
             &$\hat{\sigma}_f \ [\sigma_{f,\text{ll}},\sigma_{f,\text{ul}}]$ & $\sigma_f=1$ & 1.38 [0.71, 2.88] & 1.38 [0.73, 2.85] & 1.42 [0.73, 2.95] & 1.11 [0.66, 1.91] & 1.11 [0.66, 1.92] \\
             &$\hat{\rho} \ [\rho_{\text{ll}},\rho_{\text{ul}}]$ & $\rho=0.25$ & 0.37 [0.17, 0.74] & 0.37 [0.17, 0.79] & 0.35 [0.17, 0.76] & 0.60 [0.41, 0.81] & 0.59 [0.41, 0.91] \\
             & Time (min) & - & 0.5 & 0.9 & 0.8 & 0.2 & 0.2 \\
        \rule{0pt}{4ex}Smooth and & $\text{MSPE}_{f^*}$ & - & 0.035 & 0.035 & 0.034 & 0.037 & 0.037 \\
            high noise & 95\% CI ($\bm{f}^*$) Area & - & 2.19 & 2.23 & 2.20 & 2.36 & 2.28 \\
            &$\hat{\tau} \ [\tau_{\text{ll}},\tau_{\text{ul}}]$ & $\tau = 2$ & 2.07 [1.54, 2.72] & 2.08 [1.56, 2.71] & 2.08 [1.54, 2.69] & 2.06 [1.51, 2.70] & 2.07 [1.53, 2.69] \\
             &$\hat{\sigma}_f \ [\sigma_{f,\text{ll}},\sigma_{f,\text{ul}}]$ & $\sigma_f=1$ & 0.64 [0.32, 1.33] & 0.63 [0.32, 1.28] & 0.65 [0.33, 1.43] & 0.57 [0.31, 1.14] & 0.60 [0.31, 1.20] \\
             &$\hat{\rho} \ [\rho_{\text{ll}},\rho_{\text{ul}}]$ & $\rho=0.25$ & 0.49 [0.15, 1.07] & 0.52 [0.15, 1.13] & 0.47 [0.15, 1.14] & 0.81 [0.59, 1.13] & 0.62 [0.43, 0.92] \\
             & Time (min) & - & 0.6 & 0.9 & 0.8 & 0.2 & 0.3  \\
        \rule{0pt}{4ex}Wiggly and & $\text{MSPE}_{f^*}$ & - & 0.005 & 0.005 & 0.005 & 0.004 & 0.004 \\
            low noise & 95\% CI ($\bm{f}^*$) Area & - & 1.14 & 1.12 & 1.13 & 1.17 & 1.15 \\
            &$\hat{\tau} \ [\tau_{\text{ll}},\tau_{\text{ul}}]$ & $\tau = 30$ & 23.0 [16.8, 30.4] & 22.9 [16.9, 29.8] & 23.1 [16.8, 30.2] & 23.2 [16.9, 30.4] & 23.1 [16.8, 30.4] \\
             &$\hat{\sigma}_f \ [\sigma_{f,\text{ll}},\sigma_{f,\text{ul}}]$ & $\sigma_f=1$ & 0.84 [0.51, 1.45] & 0.87 [0.52, 1.52] & 0.85 [0.52, 1.44] & 0.78 [0.49, 1.29] & 0.79 [0.51, 1.26] \\
             &$\hat{\rho} \ [\rho_{\text{ll}},\rho_{\text{ul}}]$ & $\rho=2$ & 1.89 [1.02, 3.74] & 1.73 [1.02, 3.47] & 1.83 [1.03, 3.39] & 2.30 [1.95, 2.68] & 2.20 [1.53, 2.71] \\
             & Time (min) & - & 0.6 & 1.0 & 0.9 & 0.3 & 0.3 \\
         \rule{0pt}{4ex}Wiggly and & $\text{MSPE}_{f^*}$ & - & 0.054 & 0.052 & 0.053 & 0.054 & 0.053 \\
            high noise & 95\% CI ($\bm{f}^*$) Area & - & 3.10 & 3.12 & 3.12 & 3.09 & 3.05 \\
            &$\hat{\tau} \ [\tau_{\text{ll}},\tau_{\text{ul}}]$ & $\tau = 2$ & 2.32 [1.68, 3.04] & 2.31 [1.68, 3.02] & 2.31 [1.69, 3.05] & 2.32 [1.70, 3.05] & 2.31 [1.67, 3.04] \\
             &$\hat{\sigma}_f \ [\sigma_{f,\text{ll}},\sigma_{f,\text{ul}}]$ & $\sigma_f=1$ & 0.94 [0.55, 1.69] & 0.95 [0.55, 1.71] & 0.94 [0.54, 1.7] & 0.94 [0.54, 1.65] & 0.97 [0.56, 1.73] \\
             &$\hat{\rho} \ [\rho_{\text{ll}},\rho_{\text{ul}}]$ & $\rho=2$ & 2.96 [1.45, 3.96] & 2.81 [1.29, 3.92] & 2.86 [1.38, 3.93] & 2.81 [2.35, 3.33] & 2.64 [2.08, 3.16] \\
             & Time (min) & - & 0.6 & 1.0 & 1.0 & 0.3 & 0.3 \\
        \\ \bottomrule
    \end{tabular}
    \caption{Performance results, parameter estimates, and computing time for Bayesian GP regression with $n=100$ data points simulated using a squared exponential covariance kernel. Time shown is total time for setup and 27,000 iterations through Gibbs sampler, with 2,500 samples retained.}\label{tab:sim_smalln100}
\end{table} 

\end{landscape}

\begin{landscape}

\subsubsection{Comparison tables for non-GP true function}\label{SIsec:large_n_compare}

\begin{table}[!phtb]
    \begin{tabular}{l c c c c c c c }\toprule
        & & Exact GP & \multicolumn{2}{c}{FIFA-GP}  & \multicolumn{2}{c}{Compressed GP} \\ \cmidrule(r){3-3}\cmidrule(l){4-5}\cmidrule(l){6-7}
        &&& $\epsilon_{\text{max}}=10^{-14}$ & $\epsilon_{\text{max}}=10^{-10}$ & $\epsilon_{\text{fro}}=0.01$ & $\epsilon_{\text{fro}}=0.1$ \\\midrule
        \rule{0pt}{4ex}$n=100$ & $\text{MSPE}_{f^*}$ & 0.025 & 0.023 & 0.024 & 0.024 & 0.024 \\
            & 95\% CI ($\bm{f}^*$) Area & 2.75 & 2.72 & 2.69 & 2.67 & 2.68 \\
            &$\hat{\tau} \ [\tau_{\text{ll}},\tau_{\text{ul}}]$ & 1.85 [1.32, 2.38] & 1.87 [1.37, 2.37] & 1.86 [1.38, 2.40] & 1.87 [1.31, 2.49] & 1.85 [1.37, 2.47] \\
             &$\hat{\sigma}_f \ [\sigma_{f,\text{ll}},\sigma_{f,\text{ul}}]$ & 0.93 [0.47, 2.16] & 1.10 [0.49, 3.57] & 0.82 [0.47, 1.41] & 0.85 [0.48, 1.53] & 0.82 [0.48, 1.37] \\
             &$\hat{\rho} \ [\rho_{\text{ll}},\rho_{\text{ul}}]$ & 1.11 [0.28, 1.98] & 1.04 [0.21, 1.97] & 1.13 [0.47, 1.94] & 1.21 [1.06, 1.36] & 1.18 [0.89, 1.29] \\
             & Time (min) & 0.1 & 0.1 & 0.1 & 0.0 & 0.0 \\
        \rule{0pt}{4ex}$n=500$ & $\text{MSPE}_{f^*}$ & 0.014 & 0.013 & 0.012 & 0.012 & 0.015 \\
            & 95\% CI ($\bm{f}^*$) Area & 1.49 & 1.56 & 1.54 & 1.48 & 1.57 \\
            &$\hat{\tau} \ [\tau_{\text{ll}},\tau_{\text{ul}}]$ & 1.94 [1.73, 2.18] & 1.94 [1.70, 2.20] & 1.93 [1.71, 2.18] & 1.93 [1.70, 2.16] & 1.93 [1.70, 2.19] \\
             &$\hat{\sigma}_f \ [\sigma_{f,\text{ll}},\sigma_{f,\text{ul}}]$ & 0.97 [0.53, 2.02] & 1.02 [0.51, 2.48] & 0.93 [0.52, 1.88] & 0.82 [0.51, 1.32] & 0.86 [0.48, 1.85] \\
             &$\hat{\rho} \ [\rho_{\text{ll}},\rho_{\text{ul}}]$ & 1.02 [0.43, 1.81] & 1.05 [0.39, 1.78] & 1.01 [0.39, 1.92] & 1.05 [1.05, 1.05] & 1.28 [1.28, 1.28] \\
             & Time (min) & 2.0 & 0.6 & 0.5 & 0.2 & 0.2 \\
        \rule{0pt}{4ex}$n=1000$ & $\text{MSPE}_{f^*}$ & 0.007 & 0.006 & 0.008 & 0.007 & 0.007 \\
            & 95\% CI ($\bm{f}^*$) Area & 1.12 & 1.11 & 1.11 & 1.08 & 1.06 \\
            &$\hat{\tau} \ [\tau_{\text{ll}},\tau_{\text{ul}}]$ & 2.02 [1.84, 2.21] & 2.02 [1.86, 2.20] & 2.02 [1.84, 2.20] & 2.01 [1.84, 2.19] & 2.02 [1.86, 2.21] \\
             &$\hat{\sigma}_f \ [\sigma_{f,\text{ll}},\sigma_{f,\text{ul}}]$ & 0.99 [0.48, 2.16] & 0.92 [0.54, 1.68] & 0.80 [0.47, 1.59] & 0.82 [0.57, 1.16] & 0.87 [0.56, 1.29] \\
             &$\hat{\rho} \ [\rho_{\text{ll}},\rho_{\text{ul}}]$ & 1.02 [0.35, 1.86] & 0.98 [0.41, 1.86] & 1.19 [0.61, 1.91] & 1.05 [1.05, 1.05] & 1.05 [1.05, 1.05] \\
             & Time (min) & 12.7 & 1.2 & 1.1 & 0.8 & 0.8 \\
         \rule{0pt}{4ex}$n=2000$ & $\text{MSPE}_{f^*}$ & 0.002 & 0.002 & 0.002 & 0.002 & 0.002 \\
            & 95\% CI ($\bm{f}^*$) Area & 0.74 & 0.77 & 0.74 & 0.79 & 0.80 \\
            &$\hat{\tau} \ [\tau_{\text{ll}},\tau_{\text{ul}}]$ & 2.00 [1.87, 2.13] & 2.00 [1.88, 2.13] & 2.00 [1.88, 2.11] & 2.00 [1.88, 2.14] & 2.00 [1.87, 2.11] \\
             &$\hat{\sigma}_f \ [\sigma_{f,\text{ll}},\sigma_{f,\text{ul}}]$ & 1.37 [0.65, 2.19] & 0.92 [0.58, 1.66] & 1.27 [0.61, 2.28] & 0.81 [0.51, 1.40] & 0.80 [0.50, 1.52] \\
             & $\hat{\rho} \ [\rho_{\text{ll}},\rho_{\text{ul}}]$ & 0.50 [0.24, 0.95] & 0.75 [0.33, 1.56] & 0.54 [0.26, 1.16] & 1.09 [1.09, 1.09] & 1.05 [1.05, 1.05] \\
             & Time (min) & 62.9 & 2.6 & 2.2 & 4.1 & 4.1 \\
        \\ \bottomrule
    \end{tabular}
    \caption{Performance results, parameter estimates, and computing time for Bayesian GP regression with $n \in \{100,300,500,1000\}$ data points with mean $f(x) = \sin{2x} + \frac{1}{8}e^x$ and precision $\tau=2$. Time shown is total time for setup and 7,000 iterations through Gibbs sampler, with the first 2,000 samples discarded and every tenth after retained.}\label{tab:sim_largeN}
\end{table}

\end{landscape}

\subsubsection{Large-$n$ simulation results}\label{SIsec:large_n_fifa}

Figures \ref{fig:simLargeNmspe} through \ref{fig:simLargeNtimings} show performance results, $\tau$ estimate and 95\% CI limits, and computing time for FIFA-GP regression with $n$ up to 100,000. Observations have mean $f(x) = \sin{2x} + \frac{1}{8}e^x$ and precision $\tau=2$. Time shown includes setup and 7,000 iterations through Gibbs sampler, with the first 2,000 samples discarded and every tenth after retained. For all $n,$ the tolerance is set to $\epsilon_{\max}=10^{-14}.$

We see that as $n$ increases, both the accuracy and the precision of the function estimates increase (i.e., the MSPE and the area of the function 95\% credible interval approach 0). The precision estimate $\hat{\tau}$ approaches the truth, and the 95\% credible interval width narrows. Finally, even with $n=$ 100,000 the sampler only takes 200 minutes to run (on a several-years-old MacBook laptop).

\begin{figure}[htp]
    \centering
    \includegraphics[width=0.95\textwidth]{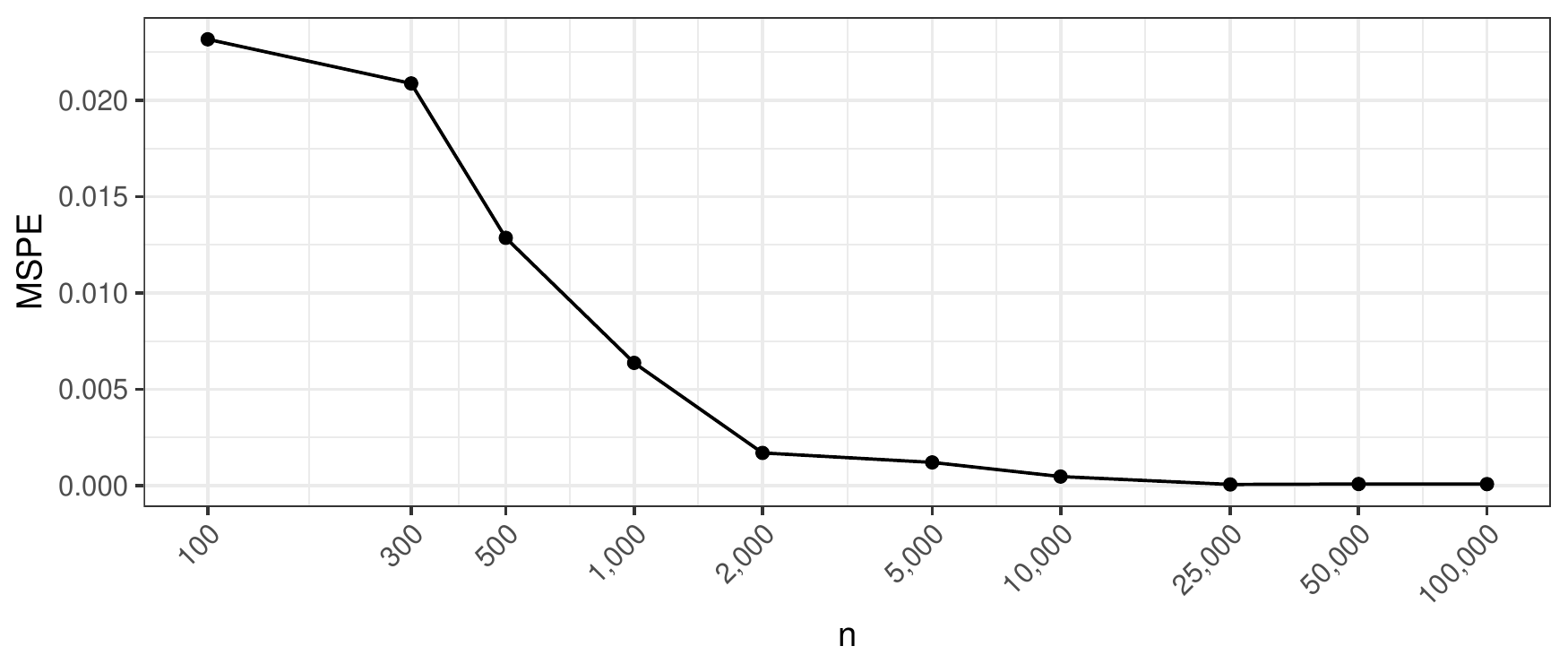}
    \caption{MSPE of hold-out data for large $n$ simulation.}
    \label{fig:simLargeNmspe}
\end{figure}

\begin{figure}[htp]
    \centering
    \includegraphics[width=0.95\textwidth]{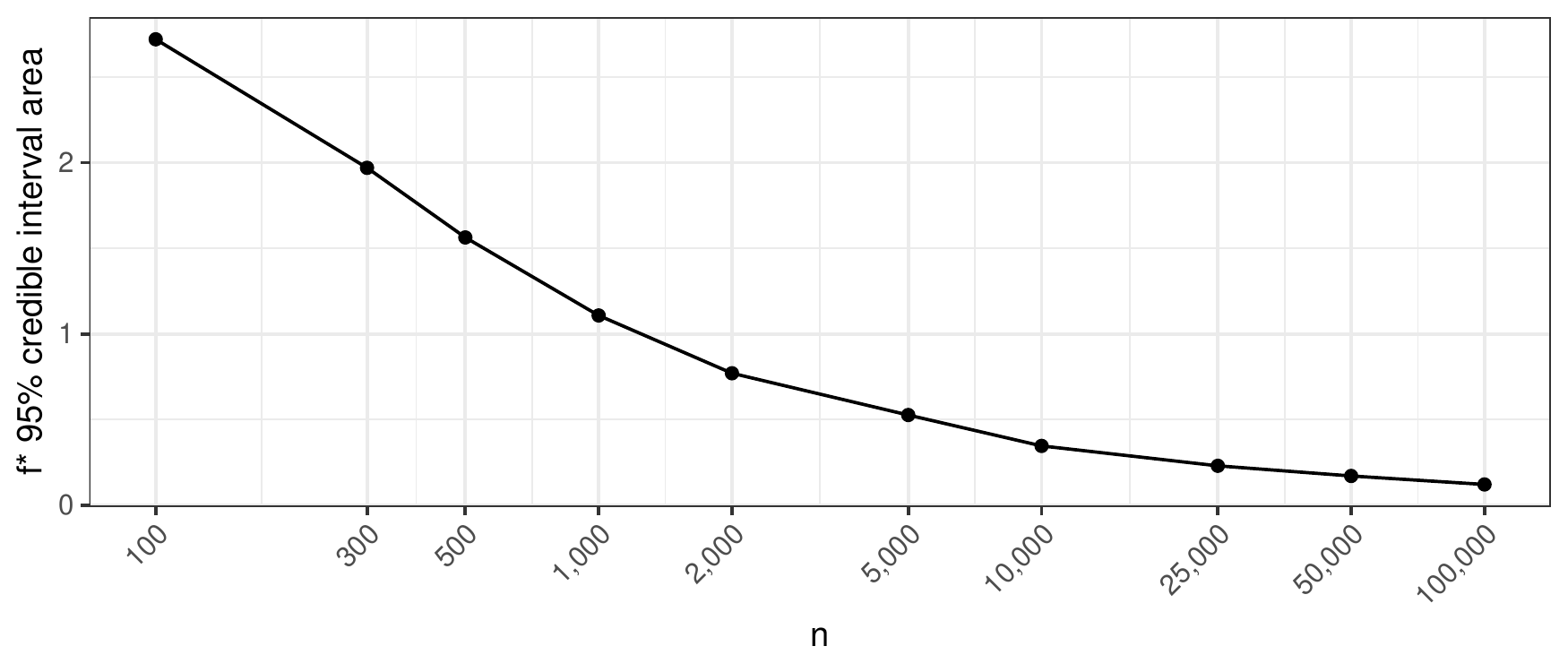}
    \caption{Area of 95\% credible interval around $\bm{f}^*$ for large $n$ simulation.}
    \label{fig:simLargeNarea}
\end{figure}

\begin{figure}[htp]
    \centering
    \includegraphics[width=0.95\textwidth]{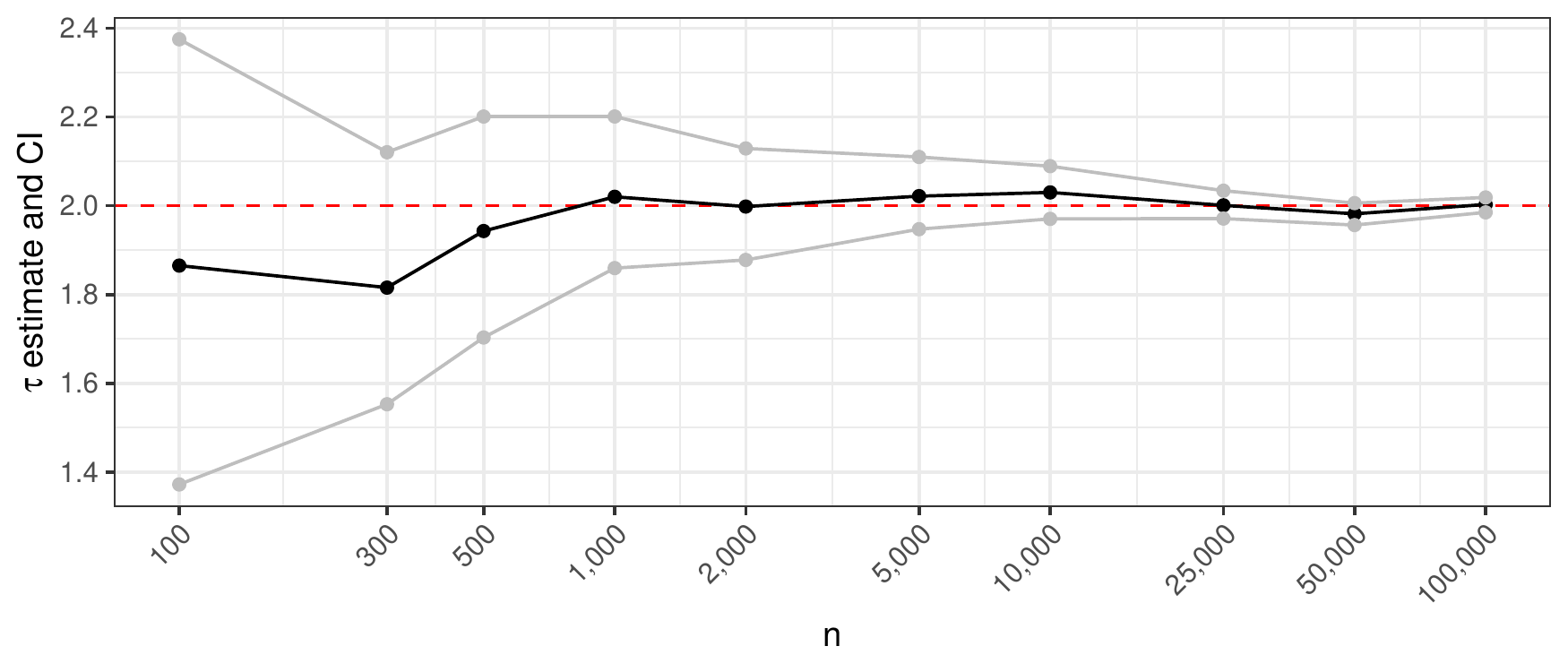}
    \caption{Estimate and credible interval of $\tau$ for large $n$ simulation.}
    \label{fig:simLargeNtau}
\end{figure}

\begin{figure}[htp]
    \centering
    \includegraphics[width=0.95\textwidth]{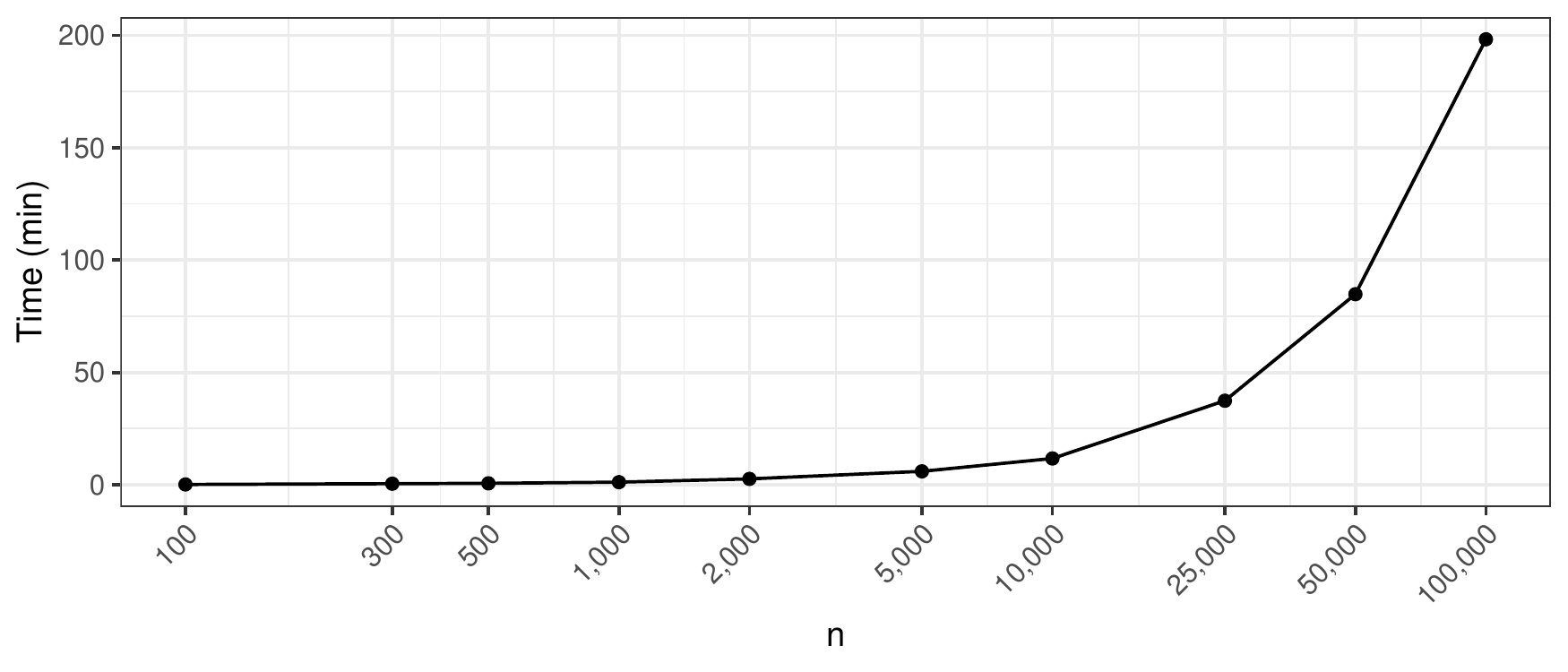}
    \caption{Timing for large $n$ simulation.}
    \label{fig:simLargeNtimings}
\end{figure}

\end{document}